\newcommand{\R}{{\mathbb{R}}}
\newcommand{\integers}{{\mathbb{Z}}}
\newcommand{\naturals}{{\mathbb{N}}}
\renewcommand{\d}{{\rm{d}}}
\newcommand{\modulo}[1]{{\left|#1\right|}}
\newcommand{\norma}[1]{{\left\|#1\right\|}}
\newcommand{\Cc}[1]{\mathbf{C_{\rm c}^{#1}}}
\newcommand{\C}[1]{\mathbf{C^{#1}}}
\newtheorem{thm}{Theorem}[section]
\newtheorem{lemma}[thm]{Lemma}
\newtheorem{proposition}[thm]{Proposition}
\newtheorem{rem}[thm]{Remark}
\newtheorem{definition}[thm]{Definition}
\theoremstyle{definition}
\newtheorem{example}[thm]{Example}
 \renewcommand{\L}[1]{\mathbf{L^{#1}}}
\newcommand{\Lloc}[1]{\mathbf{L^{#1}_{loc}}}
\newcommand{\sign}{\mathrm{sign}}
\newcommand{\BV}{\mathbf{BV}}
\newcommand{\tv}{\mathrm{TV}}
\newcommand{\PC}{\mathbf{PC}}
\newcommand{\caratt}[1]{{\displaystyle\chi_{\strut{\textstyle #1}}}}
\journal{}
\def\ps@pprintTitle{%
 \let\@oddhead\@empty
 \let\@evenhead\@empty
 \def\@oddfoot{}%
 \let\@evenfoot\@oddfoot}
\begin{document}

\begin{frontmatter}



\title{Entropy solutions for a traffic model with phase transitions}


\author[L'Aquila]{Mohamed Benyahia}
\ead{benyahia.ramiz@gmail.com}

\author[Warsaw]{Massimiliano D.~Rosini\corref{mycorrespondingauthor}}
\cortext[mycorrespondingauthor]{Corresponding author}
\ead{mrosini@umcs.lublin.pl}

\address[L'Aquila]{Gran Sasso Science Institute\\
Viale F. Crispi 7, 67100 L'Aquila, Italy}

\address[Warsaw]{
Instytut Matematyki, Uniwersytet Marii Curie-Sk\l odowskiej\\
Plac Marii Curie-Sk\l odowskiej 1, 20-031 Lublin, Poland}

\begin{abstract}
In this paper, we consider the two phases macroscopic traffic model introduced in [P.~Goatin, The Aw-Rascle vehicular traffic flow with phase transitions, Mathematical and Computer Modeling 44 (2006) 287-303].
We first apply the wave-front tracking method to prove existence and a priori bounds for weak solutions.
Then, in the case the characteristic field corresponding to the free phase is linearly degenerate, we prove that the obtained weak solutions are in fact entropy solutions \emph{\`a la} Kruzhkov.
The case of solutions attaining values at the vacuum is considered.
We also present an explicit numerical example to describe some qualitative features of the solutions.
\end{abstract}

\begin{keyword}
Conservation laws \sep phase transitions \sep entropy conditions \emph{\`a la} Kruzhkov \sep wave-front tracking \sep Lighthill-Whitham-Richards model \sep Aw-Rascle-Zhang model \sep traffic modeling.

    \MSC 35L65 \sep 90B20 \sep 76T05
\end{keyword}

\end{frontmatter}



\section{Introduction}

This paper deals with macroscopic modelling of traffic flows.
The existing literature on macroscopic models for traffic flows is already vast and characterized by contributions motivated by their real life applications, as the surveys~\cite{Bellomo2002, bellomo2011modeling, survey2013, piccolitosinreview, survey2014} and the books~\cite{garavello2006traffic, Rosinibook} demonstrate.

The macroscopic variables that translate the discrete nature of traffic into 
continuous variables are the velocity $v$, namely the space covered per unit time by the vehicles, the density $\rho$, namely the number of vehicles per unit length of the road, and the flow $f$, namely the number of vehicles per unit time.
By definition we have that
\begin{equation}\label{eq:macro1}
f=\rho \, v.
\end{equation}
Clearly, the macroscopic variables are in general functions of time $t>0$ and space $x \in \R$.
By imposing the conservation of the number of vehicles along a road with no entrances or exits we deduce the scalar conservation law
\begin{equation}\label{eq:macro2}
\rho_t + f_x = 0.
\end{equation}
Since the system~\eqref{eq:macro1}, \eqref{eq:macro2} has three unknown variables, a further condition has to be imposed.
There are two main approaches to do it.
First order macroscopic models close the system~\eqref{eq:macro1}, \eqref{eq:macro2} by giving beforehand an explicit expression of one of the three unknown variables in terms of the remaining two. The prototype of the first order models is the Lighthill, Whitham~\cite{LWR1} and Richards~\cite{LWR2} model (LWR).
The basic assumption of LWR is that the velocity of any driver depends on the density alone, namely
\[v=V(\rho).\]
The function $V \colon [0,\rho_{\max}]\to \left[0,v_{\max}\right]$ is given beforehand and is assumed to be $\C1$, non-increasing, with $V(0) = v_{\max}$ and $V(\rho_{\max})=0$, where $\rho_{\max}$ is the maximal density corresponding to the situation in which the vehicles are bumper to bumper, and $v_{\max}$ is the maximal speed corresponding to the free road.
As a result, LWR is given by the scalar conservation law
\[
\rho_t+[\rho\,V(\rho)]_x=0.
\]
Second order macroscopic models close the system~\eqref{eq:macro1}, \eqref{eq:macro2} by adding a further conservation law.
The most celebrated second order macroscopic model is the Aw, Rascle~\cite{ARZ1} and Zhang~\cite{ARZ2} model (ARZ). Away from the vacuum, ARZ writes
\begin{align*}
&\rho_t+[\rho\,v]_x=0,&
&\left[\rho\left(v+P(\rho)\right)\right]_t + \left[\rho\left(v+P(\rho)\right)v\right]_x=0,
\end{align*}
where the ``pressure'' function $P(\rho)$ plays the role of an anticipation factor, taking into account drivers' reactions to the state of traffic in front of them.

The main drawback of LWR is the unrealistic behaviour of the drivers, who take into account the slightest change in the density and adjust instantaneously their velocities according to the densities they are experiencing (which implies infinite acceleration of the vehicles).
Moreover, experimental data show that the fundamental diagram $(\rho,f)$ is given by a cloud of points rather than being the support of a map $\rho \mapsto \left[\rho \, v(\rho)\right]$.
ARZ can be interpreted as a generalization of LWR, possessing a family of fundamental diagram curves, rather than a single one.
For this reason ARZ avoids the drawbacks of LWR listed above.
Moreover, traffic hysteresis, which means that for the same distance headway drivers choose a different speed during acceleration from that chosen during deceleration, can be reproduced with ARZ but not with LWR.

On the other hand, the system describing ARZ degenerates into just one equation at the vacuum $\rho = 0$.
In particular, as pointed out in~\cite{ARZ1}, the solutions to ARZ fail to depend continuously on the initial data in any neighbourhood of $ \rho=0$; moreover, as observed in~\cite{godvik}, the solutions may experience a sudden increase of the total variation as the vacuum appears.

For the above reasons, Goatin~\cite{goatin2006aw} proposes to couple ARZ with LWR by introducing a two phase transition model.
More precisely, the phase transition model proposed in~\cite{goatin2006aw} describes the dynamics in the free flow and those in the congested flow respectively with LWR and ARZ.
In fact, this allows to better fit the experimental data, and has also the advantage of correcting the exposed drawbacks of LWR in the congested traffic and of ARZ at the vacuum.

In~\cite{GaravelloPiccoli2009} the authors point out that the model proposed in~\cite{goatin2006aw} doesn't satisfy properties that they consider necessary to model appropriately urban road networks, namely:
\begin{itemize}[leftmargin=*]
\item
Vehicles stop only at maximum density, i.e.~the velocity $v$ is zero if and only if the density $\rho$ is equal to the maximum density possible $\rho_{\max}$.
\item
The density at a red traffic light is the maximum possible, i.e.~$\rho_{\max}$.
\end{itemize}
However, ARZ can be interpreted as multi-population traffic model, see for instance~\cite{BCM2order, BCJMU-order2} and~\cite{MarcoSimoneMax-ARZ1} for a microscopic interpretation.
In particular, the vehicles are allowed to have different lengths.
On one hand this is supported by the real life experience, on the other hand this necessarily implies that the above two conditions are not satisfied.
Finally, Laval shows in~\cite{hysteresis} that traffic hysteresis is better explained in terms of heterogeneous drivers rather than acceleration and deceleration phases.
This suggests that the ability of ARZ to reproduce traffic hysteresis relies also on its ability to consider different driver behaviours.

Aim of the present paper is to generalize the model introduced in~\cite{goatin2006aw} and to prove an existence result by exploiting the recent achievements obtained in~\cite{BCM2order, BCJMU-order2} for ARZ.
We introduce a definition of entropy solution \emph{\`a la} Kruzhkov~\cite{kruzkov} and prove the corresponding existence results.
To the best of the authors' knowledge, there are no references in the literature to a rigorous definition of entropy solution to~\eqref{eq:model} or to other phase transition models for vehicular traffic, see for instance~\cite{BlandinWorkGoatinPiccoliBayen, BorscheKimathiKlar, Colombophasetransitions, ColomboGoatinPriuli, ColomboMarcelliniRascle, goatin2006aw, Marcellini2014}.
The key tools used to prove the existence of an entropy solution are the wave-front tracking method~\cite{DafermosWFT} and the estimates that permit to exploit the wave-front tracking method in the $\BV$ functional setting.
A Temple like functional is proposed in order to compensate, via a kind of potential, the possible increase of the wave fronts.
We choose to apply the wave-front tracking scheme because it is able to operate also in the case with point constraints on the flow, when non-classical shocks~\cite{LeflochBook} at the constraint locations have to be taken into account.
We recall that the concept of point constraints was introduced in the framework of vehicular traffic in~\cite{ColomboGoatinConstraint} and in the framework of crowd dynamics in~\cite{ColomboRosiniPedestrain1}.
We defer to~\cite{AndreianovDonadelloRosiniMBE, BCM2order, AndreianovDonadelloRosini1, scontrainte, BCJMU-order2, BCMUqualitative, Cances20123036, chalonsgoatinseguin, ColomboGoatinRosini1, ColomboGoatinRosini3, ColomboGoatinRosini2, DelleMonache2014435, delle2014scalar, Lattanzio201150, Rosini201363} for further developments and applications also to crowd dynamics.

In order to properly describe phase transition model, we use the coordinates given by the extension of the Riemann invariants of ARZ, rather than the conserved variables of ARZ, see~\eqref{eq:change} for the definition of the change of coordinates.
The choice of the extended Riemann invariants as independent variables is in fact convenient to describe the Riemann solver $\mathcal{R}$ and ease the forthcoming analysis, as the total variation of the solutions in these coordinates does not increase, see~\cite{BCM2order, BCJMU-order2, FerreiraKondo2010,godvik,Lu20112797} where this property is exploited to prove existence results for ARZ.

The outline of the paper is as follows.
In Section~\ref{sec:notations} we recall the phase transition model introduced in~\cite{goatin2006aw} together with its main properties.
More precisely, in Section~\ref{sec:assandnot} we introduce the notations and the assumptions needed to state in Section~\ref{sec:PT} the two phase model~\eqref{eq:model}; then in Section~\ref{sec:we} we introduce the concepts of weak and entropy solutions to the Cauchy problem for the two phase model and expose Theorem~\ref{thm:1}, that is the main result of the paper.
In Section~\ref{sec:ex} we apply the model to compute an explicit example reproducing the effects of a traffic light on the traffic along a road.
In Section~\ref{sec:wft} we describe the wave-front tracking algorithm used to construct approximate solutions and prove their convergence in $\Lloc1$.
Finally, in Section~\ref{sec:ts} we collect the technical proofs.

\section{Assumptions and main result}\label{sec:notations}

In this section we introduce a two phase transition model that combines LWR and ARZ to describe respectively the free and the congested flow based on that one proposed in~\cite{goatin2006aw}.
We conclude this section by giving our main result in Theorem~\ref{thm:1}.

\subsection{Assumptions and notations}\label{sec:assandnot}

Before writing the two phase model~\eqref{eq:model}, we need to introduce some notations.
Fix $R_{\rm f}'' > 0$ and consider a map $v_{\rm f} \colon [0,R_{\rm f}''] \to \R_+$ such that
\begin{equation}\tag{{\rm\bf H1}}\label{H1}
\begin{array}{c}
v_{\rm f} \in \C2([0,R_{\rm f}''];\R_+),
\quad
v_{\rm f}(R_{\rm f}'') > 0,
\\[5pt]
v_{\rm f}'(\rho) \leq 0,
~
v_{\rm f}(\rho) + \rho \, v_{\rm f}'(\rho) > 0,
~
2v_{\rm f}'(\rho) + \rho \, v_{\rm f}''(\rho) \leq 0
~
\text{ for every }\rho \in [0,R_{\rm f}''].
\end{array}
\end{equation}
Fix $R_{\rm f}' \in ~]0,R_{\rm f}''[$ and consider $p \colon [R_{\rm f}',+\infty[~ \to \R$ such that
\begin{align*}\tag{{\rm\bf H2}}\label{H2}
    &p \in \C2([R_{\rm f}',+\infty[;\R),&
    &p'(\rho)>0,&
    &2p'(\rho) + \rho \, p''(\rho)>0&
    \text{for every }\rho\geq R_{\rm f}'.
\end{align*}
Recall that~\eqref{H1} and~\eqref{H2} are the basic assumptions of respectively LWR and ARZ.

We also require the following compatibility condition between $v_{\rm f}$ and $p$ in order to ensure the \emph{capacity drop}~\cite{kerner2004physics} in the passage from the free phase to the congested phase
\begin{align}\tag{{\rm\bf H3}}\label{H3}
&\text{the map $\rho \mapsto v_{\rm f}(\rho)+p(\rho)$ is increasing in }[R_{\rm f}',R_{\rm f}'']&
&\text{ and }&
&v_{\rm f}(\rho) < \rho \, p'(\rho) \text{ for every }\rho \in [R_{\rm f}',R_{\rm f}''].
\end{align}

\begin{example}
The simplest and typical expression for the velocity in a free flow is the linear one~\cite{Greenshields}
\begin{equation}\label{eq:ex1v}
v_{\rm f}(\rho) \doteq v_{\max} \left[1-\frac{\rho}{R}\right],
\end{equation}
where $R>0$ is a parameter and $v_{\max}>0$ is the maximal velocity.
Clearly, the above velocity satisfies the condition~\eqref{H1} if and only if $0 < 2 R_{\rm f}''< R$.

In~\cite{AKMR2002} the authors consider as pressure function
\begin{equation}\label{eq:ex1p}
p(\rho) \doteq
\begin{cases}
\dfrac{v_{\rm ref}}{\gamma} \left[\dfrac{\rho}{\rho_{\max}}\right]^\gamma,& \gamma>0,\\[7pt]
v_{\rm ref} \, \log\left[\dfrac{\rho}{\rho_{\max}}\right],& \gamma=0,
\end{cases}
\end{equation}
where $v_{\rm ref}>0$ is a reference velocity.
The above choice reduces to the original one proposed in~\cite{ARZ1} when $v_{\rm ref}/(\gamma\,\rho_{\max}^\gamma) = 1$, and to that one proposed in~\cite{BagneriniColomboCorli,goatin2006aw} when $\gamma=0$.
The condition~\eqref{H2} is satisfied by the above expression of $p$ for any $\gamma\ge0$.
Moreover, the above choice for $v_{\rm f}$ and $p$ satisfy also~\eqref{H3} if and only if
\begin{align*}
&\frac{v_{\rm ref}}{v_{\max}}
>
\begin{cases}
\max\left\{
\left[1-\dfrac{R_{\rm f}'}{R}\right] \left[\dfrac{\rho_{\max}}{R_{\rm f}'}\right]^\gamma
,
\dfrac{R_{\rm f}'}{R} \left[\dfrac{\rho_{\max}}{R_{\rm f}'}\right]^\gamma
,
\dfrac{R_{\rm f}''}{R} \left[\dfrac{\rho_{\max}}{R_{\rm f}''}\right]^\gamma
\right\}
&\text{ if }
\gamma > 0,
\\[10pt]
1-\dfrac{R_{\rm f}''}{R}
&\text{ if }
\gamma = 0.
\end{cases}
\end{align*}
For completeness, we finally recall that in~\cite{BerthelinDegondDelitalaRascle} the authors consider
\[
p(\rho) \doteq \left(\frac{1}{\rho} - \frac{1}{\rho_{\max}}\right)^{-\gamma},
\]
that satisfies~\eqref{H2}; and in~\cite{PanHan} the authors consider $p(\rho) \doteq  -\varepsilon/\rho$, that does not satisfy~\eqref{H2}.
\end{example}

\begin{figure}[ht]
\centering{
\begin{psfrags}
      \psfrag{a}[c,B]{$\rho$}
      \psfrag{b}[c,B]{$R_{\max}$}
      \psfrag{c}[c,B]{$~~R_{\rm f}''$}
      \psfrag{d}[c,B]{$R_{\rm c}~$}
      \psfrag{e}[c,B]{$R_{\rm f}'$}
      \psfrag{f}[c,B]{$R_{\rm f}'' \, V_{\rm f}\quad$}
      \psfrag{g}[c,B]{$\rho\,v$}
      \psfrag{h}[c,B]{$V_{\max}$}
      \psfrag{i}[c,B]{$V_{\rm f}$}
      \psfrag{l}[c,B]{$V_{\rm c}$}
      \psfrag{m}[c,c]{$\Omega_{\rm c}$}
      \psfrag{n}[c,B]{$\Omega_{\rm f}''$}
      \psfrag{o}[c,B]{$\Omega_{\rm f}'$}
\includegraphics[width=.3\textwidth]{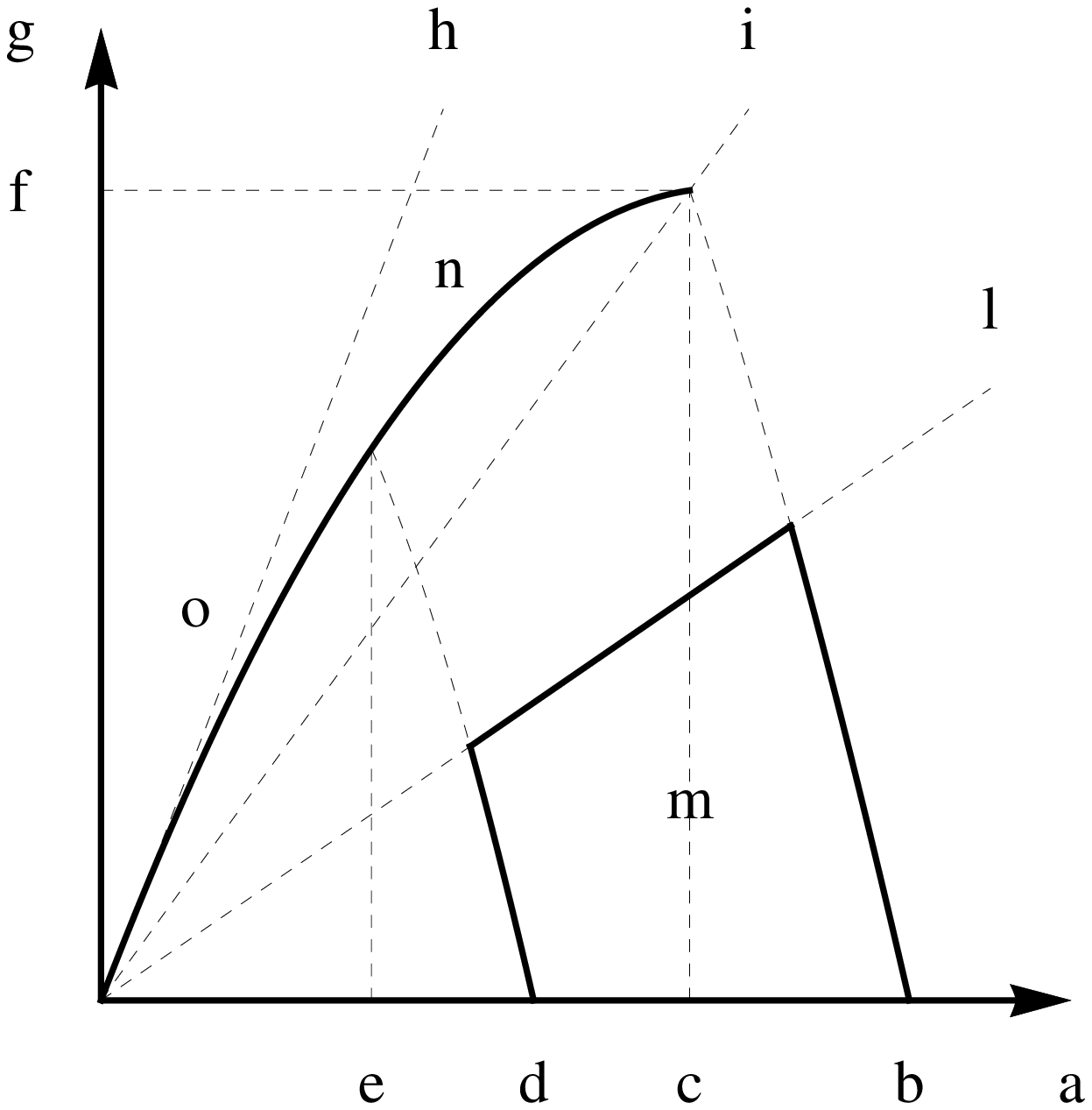}\qquad\qquad
      \psfrag{b}[c,B]{$~~~R_{\max}$}
      \psfrag{c}[c,B]{$~~R_{\rm f}''$}
      \psfrag{d}[c,B]{$R_{\rm c}~$}
      \psfrag{m}[c,c]{$~\Omega_{\rm c}$}
      \psfrag{n}[c,c]{$\Omega_{\rm f}''$}
\includegraphics[width=.3\textwidth]{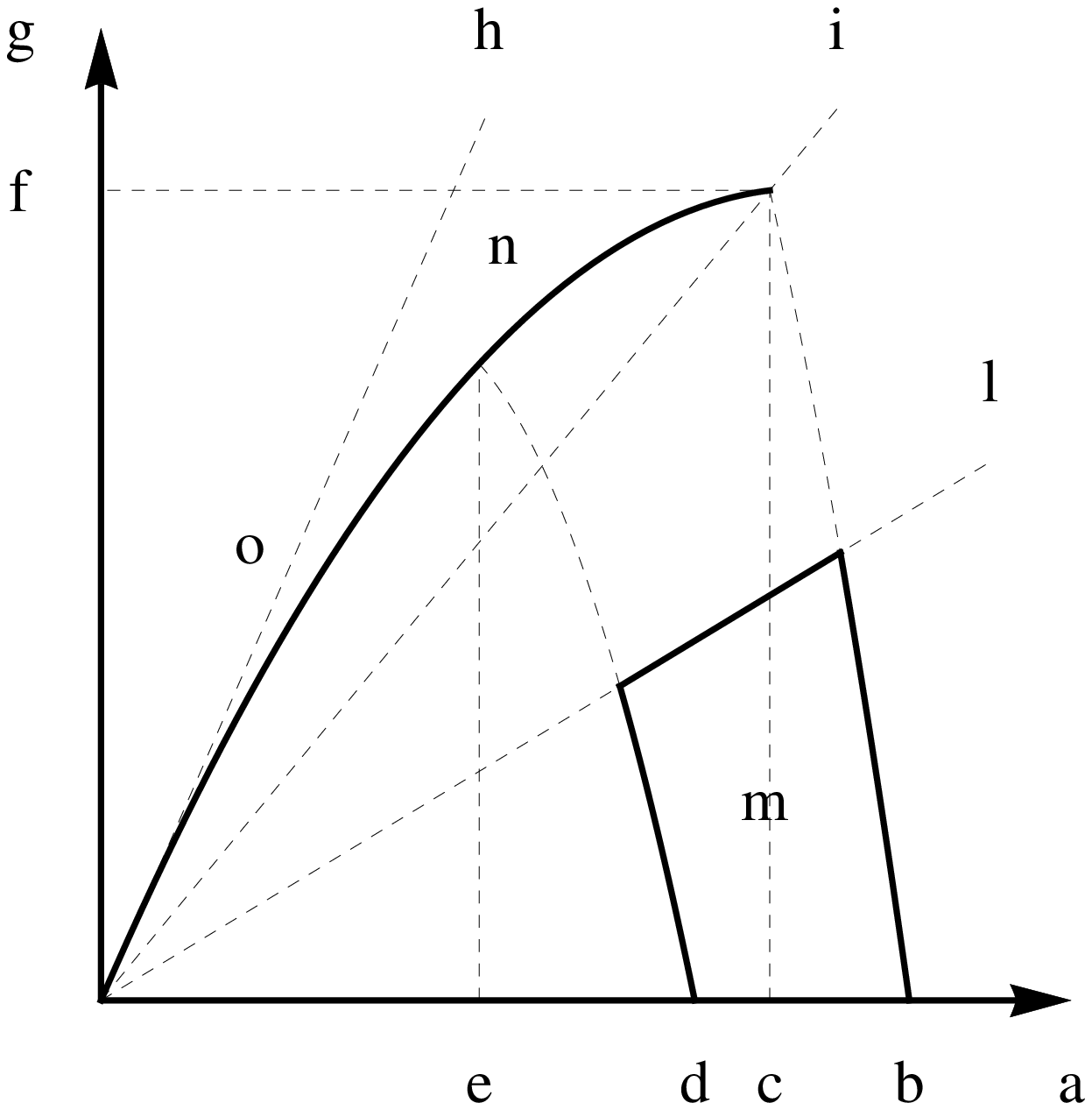}\\[5pt]
      \psfrag{n}[c,c]{$~~\Omega_{\rm f}''$}
      \psfrag{o}[c,c]{$~~\Omega_{\rm f}'$}
      \psfrag{v}[c,B]{$v$}
      \psfrag{w}[c,B]{$w$}
      \psfrag{p}[c,B]{$W_{\rm c}$}
      \psfrag{q}[c,c]{$W_{\min}~~$}
      \psfrag{r}[c,c]{$W_{\max}~~$}
      \psfrag{i}[c,c]{$V_{\rm f}$}
      \psfrag{l}[c,c]{$V_{\rm c}$}
\includegraphics[width=.3\textwidth]{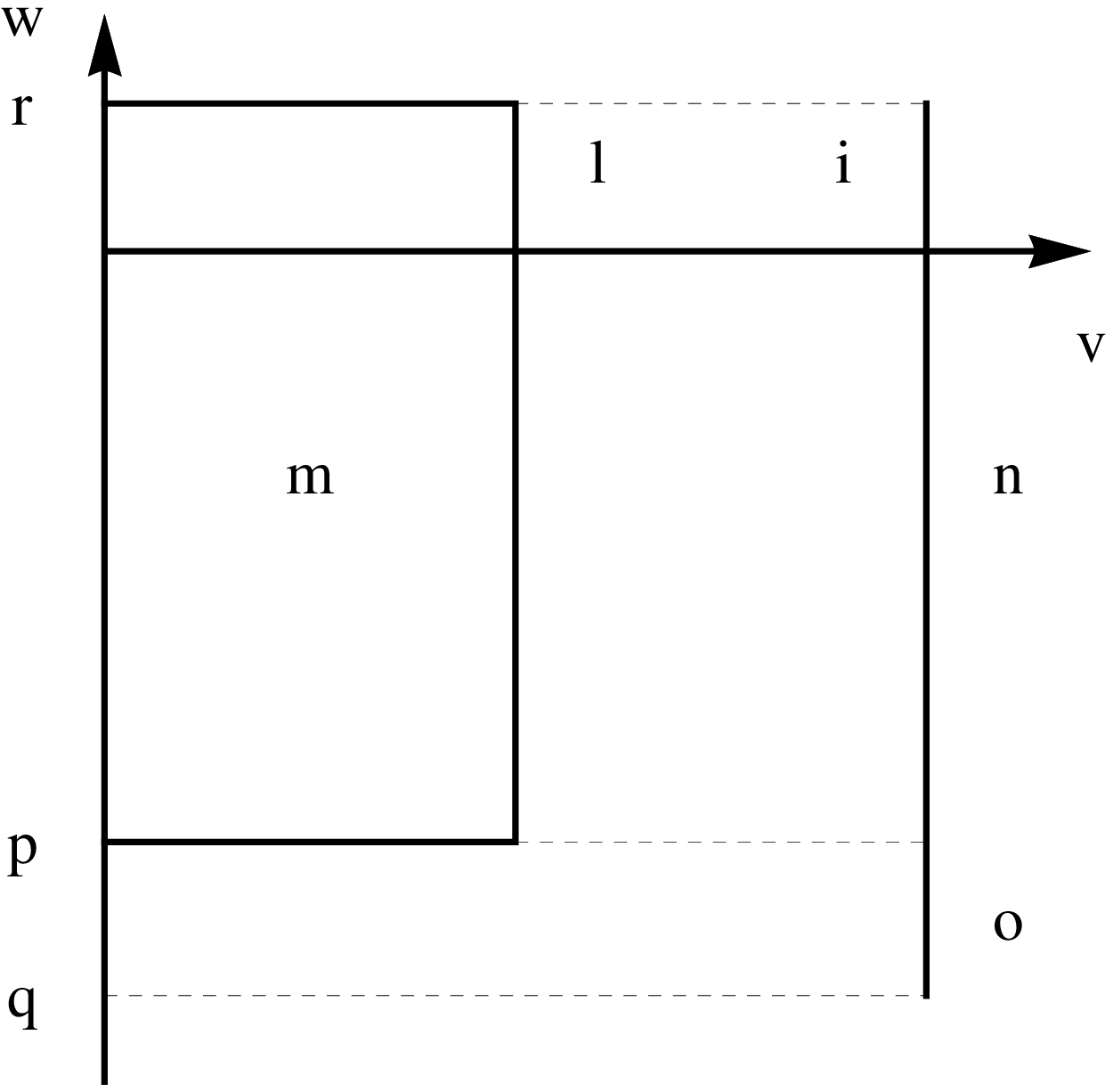}\qquad\qquad
\includegraphics[width=.3\textwidth]{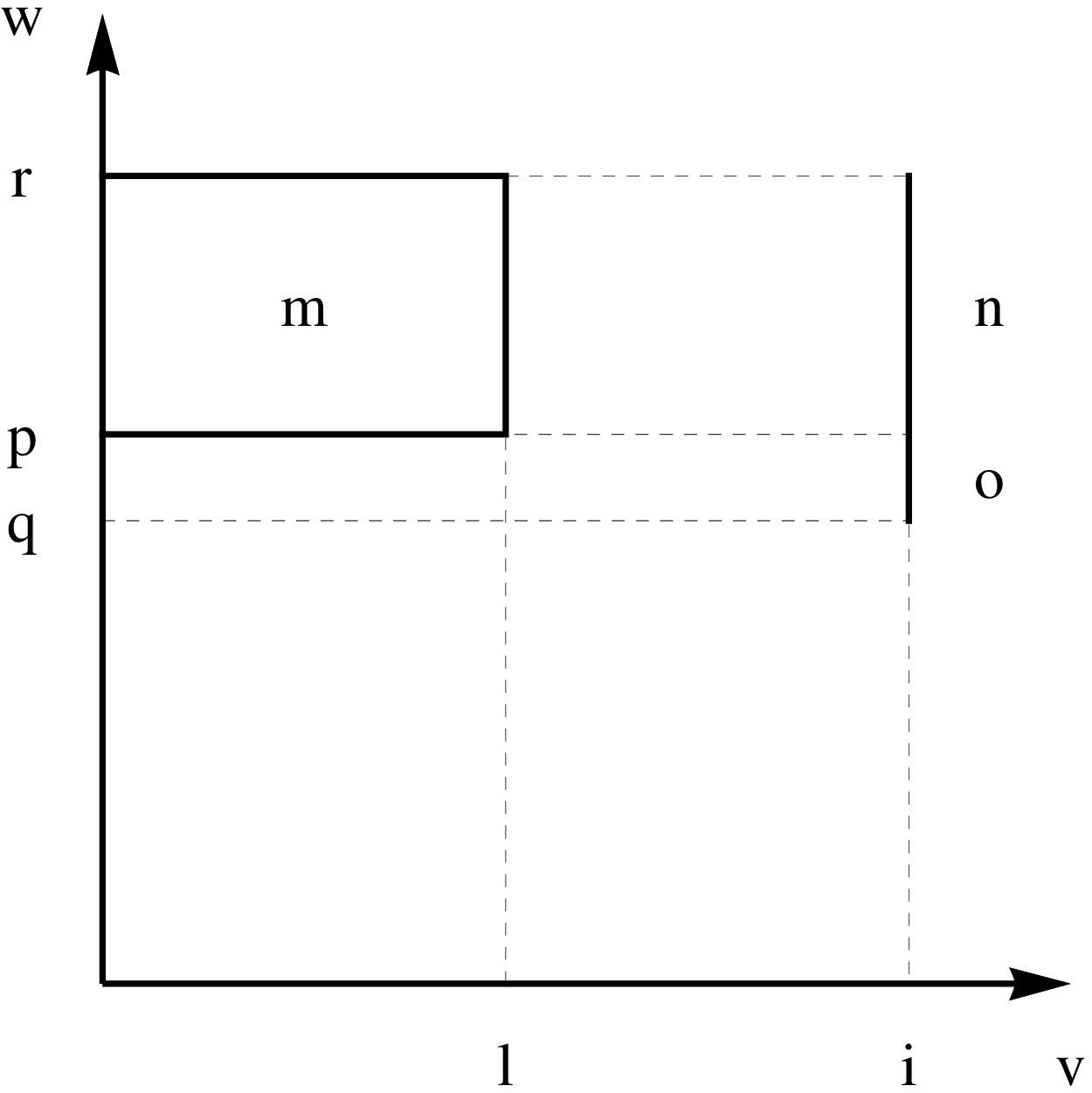}
\end{psfrags}}
\caption{The domains $\Omega_{\rm f}'$, $\Omega_{\rm f}''$ and $\Omega_{\rm c}$ corresponding to $v_{\rm f}$ and $p$ respectively given by~\eqref{eq:ex1v} and~\eqref{eq:ex1p} with $\gamma=0$ on the left, and $\gamma>0$ on the right.}
\label{fig:domains}
\end{figure}

Introduce the following notation, see \figurename~\ref{fig:domains},
\begin{align*}
&V_{\max} \doteq v_{\rm f}(0),&
&V_{\rm f} \doteq v_{\rm f}(R_{\rm f}''),
\\
&W_{\max} \doteq p(R_{\rm f}'') + V_{\rm f},&
&W_{\rm c} \doteq p(R_{\rm f}') + v_{\rm f}(R_{\rm f}'),&
&W_{\min} \doteq W_{\rm c}+v_{\rm f}(R_{\rm f}')-V_{\max},\\
&R_{\max} \doteq p^{-1}(W_{\max}),&
&R_{\rm c} \doteq p^{-1}(W_{\rm c}).
\end{align*}
By definition we have $R_{\max}>R_{\rm f}''>0$, $R_{\rm c}>R_{\rm f}'>0$, $W_{\max}>W_{\rm c}>W_{\min}$ and by~\eqref{H2} we have that $p^{-1} \colon [W_{\rm c} - v_{\rm f}(R_{\rm f}'),W_{\max}] \to [R_{\rm f}',R_{\max}]$ is increasing.

\begin{rem}
We can relax the assumption~\eqref{H2} on $p$ by requiring it on $[R_{\rm f}',R_{\max}]$.
\end{rem}

For notational simplicity, we let
\[
u \doteq (\rho,v) .
\]
Fix $V_{\rm c} \in \left]0, V_{\rm f}\right[$ and let the domains of free phases and congested phases be respectively
\begin{align*}
&\Omega_{\rm f} \doteq \left\{u \in [0,R_{\rm f}''] \times [V_{\rm f},V_{\max}] \colon v = v_{\rm f}(\rho)\right\},&
&\Omega_{\rm c} \doteq \left\{\vphantom{R_{\rm f}''} u \in [0,R_{\max}] \times [0,V_{\rm c}] \colon  W_{\rm c} \leq v+p(\rho) \leq W_{\max}\right\}.
\end{align*}
Observe that $\Omega_{\rm f}$ and $\Omega_{\rm c}$ are invariant domains for respectively LWR and ARZ.
For later use, introduce also the following subsets of $\Omega_{\rm f}$
\begin{align*}
&\Omega_{\rm f}' \doteq \left\{u \in \Omega_{\rm f} \colon \rho \in [0,R_{\rm f}'[ \right\},
&\Omega_{\rm f}'' \doteq \left\{u \in \Omega_{\rm f} \colon \rho \in [R_{\rm f}',R_{\rm f}''] \right\},
\end{align*}
and denote by $\Omega$ the domain of free and congested phases, namely
\[
\Omega \doteq \Omega_{\rm f} \cup \Omega_{\rm c}.
\]

\begin{lemma}[Definition of $\rho_{\rm f}$]
For any $w \in \left[ W_{\rm c}, W_{\max}\right]$ the graphs of the maps
\begin{align*}
&[0, R_{\rm f}''] \ni \rho \mapsto \rho \, v_{\rm f}(\rho)&
\text{and}&
&[R_{\rm f}', R_{\max}] \ni \rho \mapsto \rho \left[w-p(\rho)\right]&
\end{align*}
intersect in $\rho \doteq \rho_{\rm f}(w) > 0$.
Moreover the second map is strictly decreasing in $\rho_{\rm f}(w)$ and the capacity drop holds true.
\end{lemma}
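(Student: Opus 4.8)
The plan is to reduce the intersection claim to a one--dimensional monotonicity argument driven by~\eqref{H3}, and then to read off the two ``moreover'' assertions from the concavity of the two fluxes encoded in~\eqref{H1}--\eqref{H2}. First I would locate the intersection: in the $(\rho,f)$-plane the first graph has abscissae in $[0,R_{\rm f}'']$ and the second in $[R_{\rm f}',R_{\max}]$, so a common point must have abscissa in $[R_{\rm f}',R_{\rm f}'']$, and there, since $\rho\ge R_{\rm f}'>0$, the equality $\rho\,v_{\rm f}(\rho)=\rho\,[w-p(\rho)]$ is equivalent to $\Phi(\rho)\doteq v_{\rm f}(\rho)+p(\rho)=w$. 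By~\eqref{H3} the function $\Phi$ is continuous and strictly increasing on $[R_{\rm f}',R_{\rm f}'']$, and the definitions of $W_{\rm c}$ and $W_{\max}$ give $\Phi(R_{\rm f}')=v_{\rm f}(R_{\rm f}')+p(R_{\rm f}')=W_{\rm c}$ and $\Phi(R_{\rm f}'')=V_{\rm f}+p(R_{\rm f}'')=W_{\max}$; hence for each $w\in[W_{\rm c},W_{\max}]$ there is a unique $\rho_{\rm f}(w)\doteq\Phi^{-1}(w)\in[R_{\rm f}',R_{\rm f}'']$, which in particular is $\ge R_{\rm f}'>0$, and this is the only common point of the two graphs, because on $[R_{\rm f}',R_{\rm f}'']$ the difference of the two fluxes equals $\rho\,[\Phi(\rho)-w]$, which vanishes only at $\rho_{\rm f}(w)$, while off $[R_{\rm f}',R_{\rm f}'']$ the abscissae of the two graphs are disjoint.

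Next I would treat the congested flux $g(\rho)\doteq\rho\,[w-p(\rho)]$ on $[R_{\rm f}',R_{\max}]$. Here $g'(\rho)=w-p(\rho)-\rho\,p'(\rho)$ and $g''(\rho)=-[2p'(\rho)+\rho\,p''(\rho)]<0$ by~\eqref{H2}, so $g$ is strictly concave; and at $\rho=\rho_{\rm f}(w)$ the identity $w-p(\rho_{\rm f}(w))=v_{\rm f}(\rho_{\rm f}(w))$ from the previous step gives $g'(\rho_{\rm f}(w))=v_{\rm f}(\rho_{\rm f}(w))-\rho_{\rm f}(w)\,p'(\rho_{\rm f}(w))$, which is negative by the second inequality in~\eqref{H3} (applicable since $\rho_{\rm f}(w)\in[R_{\rm f}',R_{\rm f}'']$). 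Strict concavity then propagates $g'<0$ to all of $[\rho_{\rm f}(w),R_{\max}]$, so $g$ is strictly decreasing at, and to the right of, $\rho_{\rm f}(w)$, which is the second assertion.

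Finally, for the capacity drop I would combine the two facts just established: the common flux value at the intersection, $\rho_{\rm f}(w)\,v_{\rm f}(\rho_{\rm f}(w))$, does not exceed the maximal free--phase flux $R_{\rm f}''\,V_{\rm f}$ — since $\rho\mapsto\rho\,v_{\rm f}(\rho)$ is strictly increasing on $[0,R_{\rm f}'']$ by~\eqref{H1} and $\rho_{\rm f}(w)\le R_{\rm f}''$, with equality only for $w=W_{\max}$ — while $g$ is strictly decreasing at $\rho_{\rm f}(w)$, so that the sustainable flux strictly drops as one passes from the free phase into congestion. I expect this last point to be the only delicate one: Steps one and two are routine calculus from~\eqref{H1}--\eqref{H3}, whereas ``the capacity drop holds true'' has to be matched to the precise quantitative formulation used elsewhere in the paper, and once that formulation is fixed it still follows from exactly the same two ingredients, the inequality $v_{\rm f}(\rho)<\rho\,p'(\rho)$ on $[R_{\rm f}',R_{\rm f}'']$ and the strict concavity of $g$.
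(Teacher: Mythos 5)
Your proposal is correct and follows essentially the same route as the paper: both reduce the intersection to the scalar equation $w=v_{\rm f}(\rho)+p(\rho)$ on $[R_{\rm f}',R_{\rm f}'']$, obtain existence from the endpoint values $W_{\rm c}$ and $W_{\max}$, and read the strict decrease of $\rho\mapsto\rho\left[w-p(\rho)\right]$ at $\rho_{\rm f}(w)$ from the inequality $v_{\rm f}(\rho)<\rho\,p'(\rho)$ in~\eqref{H3}. The only cosmetic difference is that you derive uniqueness directly from the strict monotonicity of $v_{\rm f}+p$ in~\eqref{H3}, whereas the paper phrases the same fact through the monotonicity of the free flux~\eqref{H1} and the concavity of the congested flux~\eqref{H2}.
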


\begin{proof}
We first observe that $R_{\rm f}' \, [w-p(R_{\rm f}')] \ge R_{\rm f}' \, v_{\rm f}(R_{\rm f}')$ and $R_{\rm f}'' \, [w-p(R_{\rm f}'')] \le R_{\rm f}'' \, v_{\rm f}(R_{\rm f}'')$, because $p(R_{\rm f}') + v_{\rm f}(R_{\rm f}') = W_{\rm c} \le w \le W_{\max} = p(R_{\rm f}'') + v_{\rm f}(R_{\rm f}'')$.
Moreover, by~\eqref{H1} the map $\rho \mapsto \rho \, v_{\rm f}(\rho)$ is strictly increasing in $[0,R_{\rm f}'']$, and by~\eqref{H2} the map $\rho \mapsto \rho \left[w-p(\rho)\right]$ is strictly concave in $[R_{\rm f}',+\infty[$.
Therefore there exists a unique $\rho = \rho(w)$ in $[R_{\rm f}' , R_{\rm f}'']$ such that $\rho \, [w-p(\rho)] = \rho \, v_{\rm f}(\rho)$, namely $w = p(\rho) + v_{\rm f}(\rho)$, and by~\eqref{H3} it satisfies $w - p(\rho) - \rho \, p'(\rho)
= v_{\rm f}(\rho)- \rho \, p'(\rho) < 0$.
\end{proof}
\noindent
Let us underline that by definition $R_{\rm f}' = \rho_{\rm f}(W_{\rm c})$ and $R_{\rm f}'' = \rho_{\rm f}(W_{\max})$.

\subsection{The two phase model}\label{sec:PT}

We are now in a position to write our two phase model
\begin{align}\label{eq:model}
&\begin{array}{l}
\text{\textbf{Free flow} (LWR)}\\[2pt]
\begin{cases}
(\rho,v) \in \Omega_{\rm f},\\
\rho_t+[\rho\,v]_x=0,\\
v=v_{\rm f}(\rho),
\end{cases}
\end{array}
&
\begin{array}{l}
\text{\textbf{Congested flow} (ARZ)}\\[2pt]
\begin{cases}
(\rho,v) \in \Omega_{\rm c},\\
\rho_t+[\rho\,v]_x=0,\\
\left[\rho\left(v+p(\rho)\right)\right]_t + \left[\rho\left(v+p(\rho)\right)v\right]_x=0.
\end{cases}
\end{array}
\end{align}
Above $\rho$ and $v$ denote respectively the density and the average speed of the vehicles, while $v_{\rm f}$ and $p$ are given functions satisfying~\eqref{H1}, \eqref{H2} and~\eqref{H3} and denote respectively the speed of the vehicles in a free flow and the ``pressure'' of the vehicles in a congested flow.
Recall that $p$ takes into account the drivers' reactions to the state of traffic in front of them.
Finally, $\Omega_{\rm f}$ and $\Omega_{\rm c}$ are respectively the domains of free and congested phases.
Observe that in $\Omega_{\rm f}$ the density $\rho$ is the unique independent variable, while in $\Omega_{\rm c}$ the independent variables are two, both the density $\rho$ and the velocity $v$.

The aim of this article is to prove Theorem~\ref{thm:1} given at the end of this section, that states the global existence of solutions of Cauchy problems for~\eqref{eq:model} with $\BV$-initial data
\begin{align}\label{eq:initial}
&\rho(0,x) = \bar{\rho}(x),&
&v(0,x) = \bar{v}(x).
\end{align}

Let $u$ be a solution of~\eqref{eq:model}, \eqref{eq:initial} in a sense that we specify in the next subsection.
Any discontinuity performed by $u$ separating a state in $\Omega_{\rm f}$ from a state in $\Omega_{\rm c}$ is called a \emph{phase transition}.
In this paper the number of phase transitions performed by the initial datum and by the solution is not fixed a priori but are imposed to be finite.

We recall the main features of the two phase model~\eqref{eq:model}.
In the free phase the characteristic speed is $\lambda_{\rm f}(\rho) \doteq v_{\rm f}(\rho) + \rho \, v_{\rm f}'(\rho)$, while the informations on the system modelling the congested phase are collected in the following table (see~\citep{ARZ1} for more details):
\begin{align*}
&r_1(u) \doteq (1,-p'(\rho)),&
&r_2(u) \doteq (1,0),\\
&\lambda_1(u) \doteq v-\rho\,p'(\rho),&
&\lambda_2(u) \doteq v,\\
&\nabla\lambda_1\cdot r_1(u) = -2p'(\rho)-\rho\,p''(\rho),&
&\nabla\lambda_2\cdot r_2(u) = 0,\\
&\mathcal{L}_1(\rho;u_0) \doteq v_0+p(\rho_0)-p(\rho),&
&\mathcal{L}_2(\rho;u_0) \doteq v_0,\\
&w_1(u) \doteq v,&
&w_2(u) \doteq v+p(\rho),
\end{align*}
where $r_i$ is the $i$-th right eigenvector, $\lambda_i$ is the corresponding eigenvalue, $\{ u \in \Omega_{\rm c} \colon v = \mathcal{L}_i(\rho;u_0)\}$ is the $i$-Lax curve passing through $u_0 \in \Omega_{\rm c}$ and $w_i$ is the $i$-th Riemann invariant.
In particular, the assumptions~\eqref{H1} and~\eqref{H2} ensure that the characteristic speeds are bounded by the velocity, $\lambda_{\rm f}(u) \le v$, $\lambda_1(u) \leq \lambda_2(u) = v$, and that $\lambda_1$ is genuinely non-linear, $\nabla\lambda_1 \cdot r_1(u) \neq0$.

By using the Riemann invariant coordinates, the domain of congested phases writes $\Omega_{\rm c} = [0,V_{\rm c}] \times [W_{\rm c},W_{\max}]$, see \figurename~\ref{fig:domains}.
We extend the Riemann invariants to the whole $\Omega$ and define for any $u \in \Omega_{\rm f}$
\begin{align*}
&w_1(u) \doteq V_{\rm f},&
&w_2(u) \doteq \begin{cases}
v_{\rm f}(\rho)+p(\rho)&\text{if }u \in \Omega_{\rm f}'',\\
W_{\rm c}+v_{\rm f}(R_{\rm f}')-v_{\rm f}(\rho)&\text{if }u \in \Omega_{\rm f}',
\end{cases}
\end{align*}
so that the domain of free phases writes in the extended Riemann invariant coordinates $\Omega_{\rm f} = \{V_{\rm f}\} \times [W_{\min},W_{\max}]$, see \figurename~\ref{fig:domains}.
In conclusion, we let
\begin{align}\label{eq:change}
&w_1(u) \doteq 
\begin{cases}
v&\text{if }u \in \Omega_{\rm c},\\
V_{\rm f}&\text{if }u \in \Omega_{\rm f},
\end{cases}
&w_2(u) \doteq \begin{cases}
v+p(\rho)&\text{if }u\in \Omega_{\rm c} \cup \Omega_{\rm f}'',\\
W_{\rm c}+v_{\rm f}(R_{\rm f}')-v_{\rm f}(\rho)&\text{if }u \in \Omega_{\rm f}'.
\end{cases}
\end{align}
In $\Omega$ we will consider the norm corresponding to the above extended Riemannn invariant coordinates
\[
\norma{u} \doteq \modulo{w_1(u)} + \modulo{w_2(u)},
\]
as well as the corresponding distance.

\subsection{Weak and entropy solutions}\label{sec:we}

In this section we introduce the definitions of weak and entropy solutions to~\eqref{eq:model}, \eqref{eq:initial} and prove an existence result in Theorem~\ref{thm:1}.
Let us first recall the corresponding definitions related to LWR and ARZ.

\begin{definition}[Solutions of LWR~\cite{kruzkov}]\label{def:solLWR}
Fix an initial datum $\bar{u}$ in $\L\infty(\R;\Omega_{\rm f})$.
Let $u$ be a function in $\L\infty(\R_+\times\R;\Omega_{\rm f}) \cap \C0(\R_+;\Lloc1(\R;\Omega_{\rm f}))$.
\begin{enumerate}[label={(\arabic*)},leftmargin=*]
\item
We say that $u$ is a weak solution to LWR~\eqref{eq:model}-left, \eqref{eq:initial} if $u(0,x) = \bar{u}(x)$ for a.e.~$x\in\R$ and for any test function $\varphi$ in $\Cc\infty(]0,+\infty[\times\R;\R)$
\begin{equation}\label{eq:weakLWR}
\iint_{\R_+\times\R} \rho \left[\vphantom{\sum} \varphi_t+v_{\rm f}(\rho) \, \varphi_x\right] \d x \, \d t = 0.
\end{equation}
\item
We say that $u$ is an entropy solution to LWR~\eqref{eq:model}-left, \eqref{eq:initial} if $u(0,x) = \bar{u}(x)$ for a.e.~$x\in\R$ and for any non-negative test function $\varphi$ in $\Cc\infty(]0,+\infty[\times\R;\R)$ and for any constant $h$ in $[0,R_{\rm f}'']$
\begin{equation}\label{eq:entropyLWR}
\iint_{\R_+\times\R} \left[\mathcal{E}_{\rm LWR}^h(u) \, \varphi_t + \mathcal{Q}_{\rm LWR}^h(u) \, \varphi_x\right] \d x \, \d t \ge 0,
\end{equation}
where
\begin{align*}
&\mathcal{E}_{\rm LWR}^h(u) \doteq \modulo{\rho-h},&
&\mathcal{Q}_{\rm LWR}^h(u) \doteq \sign(\rho-h) \left(\vphantom{\sum} \rho \, v_{\rm f}(\rho) - h \, v_{\rm f}(h)\right).
\end{align*}
\end{enumerate}
\end{definition}

\begin{definition}[Solutions of ARZ~\cite{BCM2order, BCJMU-order2}]\label{def:solARZ}
Fix an initial datum $\bar{u}$ in $\L\infty(\R;\Omega_{\rm c})$.
Let $u$ be a function in $\L\infty(\R_+\times\R;\Omega_{\rm c}) \cap \C0(\R_+;\Lloc1(\R;\Omega_{\rm c}))$.
\begin{enumerate}[label={(\arabic*)},leftmargin=*]
\item
We say that $u$ is a weak solution to ARZ~\eqref{eq:model}-right, \eqref{eq:initial} if $u(0,x) = \bar{u}(x)$ for a.e.~$x\in\R$ and for any test function $\varphi$ in $\Cc\infty(]0,+\infty[\times\R;\R)$
\begin{align}\label{eq:weakARZ}
&\iint_{\R_+\times\R} \rho \left[\vphantom{\sum} \varphi_t+v \, \varphi_x\right] \d x \, \d t = 0,&
&\iint_{\R_+\times\R} \rho \, w_2(u) \left[\vphantom{\sum} \varphi_t+v \, \varphi_x\right] \d x \, \d t = 0.
\end{align}
\item
We say that $u$ is an entropy solution to ARZ~\eqref{eq:model}-right, \eqref{eq:initial} if it is a weak solution and for any non-negative test function $\varphi$ in $\Cc\infty(]0,+\infty[\times\R;\R)$ and for any constant $k$ in $[0,V_{\rm c}]$
\begin{equation}\label{eq:entropyARZ}
\iint_{\R_+\times\R} \left[\mathcal{E}_{\rm ARZ}^k(u) \, \varphi_t + \mathcal{Q}_{\rm ARZ}^k(u) \, \varphi_x\right] \d x \, \d t \ge 0,
\end{equation}
where
\begin{align*}
&\mathcal{E}_{\rm ARZ}^k(u) \doteq
\begin{cases}
0&\text{if }v\le k,
\\
1-\dfrac{\rho}{p^{-1}(w_2(u)-k)}&\text{if }v>k,
\end{cases}&
&\mathcal{Q}_{\rm ARZ}^k(u) \doteq
\begin{cases}
0&\text{if }v\le k,
\\
k-\dfrac{\rho \, v}{p^{-1}(w_2(u)-k)}&\text{if }v>k.
\end{cases}
\end{align*}
\end{enumerate}
\end{definition}

The definitions of weak and entropy solutions to the two phase model~\eqref{eq:model} can not be obtained by just imposing the Definition~\ref{def:solLWR} in the free phase and the Definition~\ref{def:solARZ} in the congested phase.
In fact, the condition~\eqref{eq:weakLWR} states the conservation of the number of vehicles, while~\eqref{eq:weakARZ} states the conservation of both the number of vehicles and of the generalized momentum $\rho \, w_2(u)$.
For this reason, in a phase transition from $\Omega_{\rm c}$ to $\Omega_{\rm f}$ we ``loose'' the conservation of the generalized momentum, while in a phase transition from $\Omega_{\rm f}$ to $\Omega_{\rm c}$ we ``introduce'' a generalized momentum, which is then conserved as long as the traffic remains in the congested phase.

Moreover, it is well known that ARZ can be interpreted as a generalization of LWR to the case of a multi-population traffic, see for instance~\cite{BCM2order, BCJMU-order2, MarcoSimoneMax-ARZ1, FanHertySeibold}.
More specifically, each vehicle is characterized by a constant value of $w_2$, in other words, for any trajectory of a vehicle $t \mapsto x(t)$, the map $t \mapsto w_2(u(t,x(t)))$ is constant.
For this reason $w_2$ is a Lagrangian marker.
Furthermore, the vehicle initially in $\bar{x}$ has at any time Lagrangian marker $\bar{w} \doteq w_2(\bar{u}(\bar{x})))$ and, consequently, it has speed law $\rho \mapsto [\bar{w}-p(\rho)]$, length $[1/p^{-1}(\bar{w})]$ and maximal velocity $\bar{w}$.
For this reason, in a phase transition from $\Omega_{\rm c}$ to $\Omega_{\rm f}$ we ``loose'' the characterization of the vehicles, while in a phase transition from $\Omega_{\rm f}$ to $\Omega_{\rm c}$ we ``introduce'' a characterization of the vehicles.

We first state general definitions of weak and entropy solutions to the Cauchy problem for the two phase model~\eqref{eq:model}, \eqref{eq:initial} based on that ones given in~\cite{ColomboMarcellini2015}, where the LWR model is coupled with a microscopic follow-the leader model, see also~\cite[Definition~1.2 on page~243]{LeflochBook}.
Introduce the following notations
\begin{align}\label{eq:sigma}
\sigma(u_-,u_+) \doteq \dfrac{\rho_+ \, v_+ - \rho_- \, v_-}{\rho_+ - \rho_-},
\end{align}
$\mathcal{G}_{\rm w} \doteq \mathcal{G}_1 \cup \mathcal{G}_2 \cup \mathcal{G}_1^T \cup \mathcal{G}_2^T$ and $\mathcal{G}_{\rm e} \doteq \mathcal{G}_1 \cup \mathcal{G}_2 \cup \mathcal{G}_3$, see~\cite{scontrainte}, where
\begin{align*}
&\mathcal{G}_1 \doteq \left\{ (u_-,u_+) \in \Omega_{\rm f}' \times \Omega_{\rm c} \colon \left[w_2(u_+) - W_{\rm c}\right]\rho_- = 0\right\},&
&\mathcal{G}_2 \doteq \left\{ (u_-,u_+) \in \Omega_{\rm f}'' \times \Omega_{\rm c} \colon w_2(u_-) = w_2(u_+)\right\},\\
&\mathcal{G}_3 \doteq \left\{ (u_-,u_+) \in \Omega_{\rm c} \times \Omega_{\rm f}'' \colon w_2(u_-) = w_2(u_+) \text{ and }v_- = V_{\rm c}\right\},&
&\mathcal{G}_i^T \doteq \left\{\vphantom{\Omega_{\rm f}'} (u_-,u_+) \in \Omega \times \Omega \colon (u_+,u_-) \in \mathcal{G}_i \right\}.
\end{align*}

\begin{definition}[General concept of weak solution of~\eqref{eq:model}, \eqref{eq:initial}]\label{def:weakPT}
Fix an initial datum $\bar{u}$ in $\BV(\R;\Omega)$.
A function $u$ in $\L\infty(]0,+\infty[;\BV(\R;\Omega)) \cap \C0(\R_+;\Lloc1(\R;\Omega))$ is a weak solution to~\eqref{eq:model}, \eqref{eq:initial} if $u(0,x) = \bar{u}(x)$ for a.e.~$x\in\R$ and it satisfies the following conditions:
\begin{enumerate}[label={(W.\arabic*)},leftmargin=*]
\item\label{weakPT1}
The equality~\eqref{eq:weakLWR} holds for any test function $\varphi$ in $\Cc\infty(]0,+\infty[\times\R;\R)$ such that $u(t,x) \in \Omega_{\rm f}$ for a.e.~$(t,x)$ in the support of $\varphi$.
\item\label{weakPT2}
The equality~\eqref{eq:weakARZ} holds for any test function $\varphi$ in $\Cc\infty(]0,+\infty[\times\R;\R)$ such that $u(t,x) \in \Omega_{\rm c}$ for a.e.~$(t,x)$ in the support of $\varphi$.
\item\label{weakPT3}
There exist finitely many Lipschitz-continuous curves $x=x_i(t)$ across which $u$ may perform a phase transition.
Moreover, if $x \mapsto u(t,x)$ performs across $x=x_i(t)$, $t>0$, a phase transition from $u_-(t) \doteq \lim_{x\to x_i(t)^-} u(t,x)$ to $u_+(t) \doteq \lim_{x\to x_i(t)^+} u(t,x)$, then the speed of propagation of the phase transition $\dot{x}_i(t)$ equals $\sigma(u_-(t),u_+(t))$ and $(u_-(t),u_+(t))$ belongs to $\mathcal{G}_{\rm w}$.
\end{enumerate}
\end{definition}
It is worth to underline that the number of phase transitions performed by a weak solution $u$ of~\eqref{eq:model}, \eqref{eq:initial} is bounded from above by $\tv(u) \, [V_{\rm c} - V_{\rm f}]^{-1}$.

\begin{definition}[General concept of entropy solution of~\eqref{eq:model}, \eqref{eq:initial}]\label{def:entropyPT}
Fix an initial datum $\bar{u}$ in $\BV(\R;\Omega)$.
A function $u$ in $\L\infty(]0,+\infty[;\BV(\R;\Omega)) \cap \C0(\R_+;\Lloc1(\R;\Omega))$ is an entropy solution to~\eqref{eq:model}, \eqref{eq:initial} if it is a weak solution and satisfies the following conditions:
\begin{enumerate}[label={(E.\arabic*)},leftmargin=*]
\item\label{entropyPT1}
The estimate~\eqref{eq:entropyLWR} holds for any test function $\varphi$ in $\Cc\infty(]0,+\infty[\times\R;\R_+)$ such that $u(t,x) \in \Omega_{\rm f}$ for a.e.~$(t,x)$ in the support of $\varphi$.
\item\label{entropyPT2}
The estimate~\eqref{eq:entropyARZ} holds for any test function $\varphi$ in $\Cc\infty(]0,+\infty[\times\R;\R_+)$ such that $u(t,x) \in \Omega_{\rm c}$ for a.e.~$(t,x)$ in the support of $\varphi$.
\item\label{entropyPT3}
If $u$ performs a phase transition from $u_-$ to $u_+$, then $(u_-,u_+)$ belongs to $\mathcal{G}_{\rm e}$.
\end{enumerate}
\end{definition}

The criterion for the phase transitions given in~\ref{entropyPT3} of Definition~\ref{def:entropyPT} is introduced to select the admissible phase transitions of entropy solutions with bounded variation, where $u_-$ and $u_+$ denote the traces of the $\BV$ entropy solution $u$ on a Lipschitz curve of jump (see~\cite{Volpert} for precise formulation of the regularity of $\BV$ functions).
In the following lemma we show that this condition is satisfied by the approximate solutions constructed in  Section~\ref{sec:aas}.

\begin{lemma}\label{lem:DMB}
Fix $\bar{u}^n$ in $\PC\left(\R;\Omega^n\right)$ and let $u^n \in \C0\left(\R_+;\PC\left(\R;\Omega^n \right)\right)$ be the approximate solution constructed in Section~\ref{sec:aas}.
Then $x\mapsto u^n(t,x)$, $t>0$, satisfies the property~\ref{entropyPT3} of Definition~\ref{def:entropyPT}.
Moreover, the number of phase transitions performed by $x\mapsto u^n(t,x)$, $t\ge0$, does not increase with time and it strictly decreases if and only if two phase transitions interact.
Finally, the number of phase transitions can decrease only by an even number.
\end{lemma}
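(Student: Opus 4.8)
The strategy is to reduce every assertion to the structure of the Riemann solver $\mathcal{R}$ used in Section~\ref{sec:aas} to construct $u^n$, together with the elementary remark that, as $x$ runs over $\R$, the map $x\mapsto u^n(t,x)$ takes values alternately in $\Omega_{\rm f}$ and in $\Omega_{\rm c}$. I will use that for $t>0$ the function $u^n(t,\cdot)$ is piecewise constant with finitely many fronts, each being either a wave internal to one phase or a phase transition, and that every front present at time $t>0$ was introduced by applying $\mathcal{R}$: either at $t=0$ (solving the Riemann problems at the jumps of $\bar u^n$), or at an interaction time (solving the Riemann problem between the leftmost and rightmost colliding states).

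For property~\ref{entropyPT3} of Definition~\ref{def:entropyPT} it then suffices to check that any phase transition $(u_-,u_+)$ produced by $\mathcal{R}$ lies in $\mathcal{G}_1\cup\mathcal{G}_2\cup\mathcal{G}_3=\mathcal{G}_{\rm e}$; in particular that, whenever $u_-\in\Omega_{\rm c}$ and $u_+\in\Omega_{\rm f}$, one has $u_+\in\Omega_{\rm f}''$, $w_2(u_-)=w_2(u_+)$ and $v_-=V_{\rm c}$, as prescribed by $\mathcal{G}_3$. This is read off directly from the definition of $\mathcal{R}$ in Section~\ref{sec:aas}.

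The heart of the counting part is the structural fact that the solution of any Riemann problem solved by $\mathcal{R}$ contains \emph{no} phase transition when its two data lie in the same phase, and \emph{exactly one} phase transition when they lie in different phases: between two free data $\mathcal{R}$ reduces to the LWR solver and between two congested data to the ARZ solver, both of which leave the respective invariant domains $\Omega_{\rm f}$, $\Omega_{\rm c}$ unchanged, while between a free and a congested datum $\mathcal{R}$ produces a single phase transition separating a packet of free waves from a packet of congested waves. Granting this, I orient each phase transition by whether it goes from $\Omega_{\rm f}$ to $\Omega_{\rm c}$ or the opposite; since a front that is not a phase transition joins two states of the same phase, the part of $\R$ between two consecutive phase transitions of $u^n(t,\cdot)$ lies entirely in $\Omega_{\rm f}$ or entirely in $\Omega_{\rm c}$, so consecutive phase transitions have opposite orientation. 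Write $N(t)$ for the number of phase transitions of $u^n(t,\cdot)$; it is constant between interaction times. At an interaction at $(\bar t,\bar x)$ let $u_\ell,u_r$ be the leftmost and rightmost states involved and let $m\ge0$ be the number of phase transitions among the colliding fronts at $\bar t^-$; by the alternation these have alternating orientations, so $u_\ell$ and $u_r$ lie in the same phase when $m$ is even and in different phases when $m$ is odd, and the structural fact yields
\[
N(\bar t^+)-N(\bar t^-)=
\begin{cases}
-m & \text{if }m\text{ is even},\\
1-m & \text{if }m\text{ is odd}.
\end{cases}
\]
In both cases this is a nonpositive even integer, strictly negative exactly when $m\ge2$; the same computation at $t=0$ (with $m\in\{0,1\}$ at each jump of $\bar u^n$) gives $N(0^+)=N(0)$. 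Summing over the finitely many interactions in any bounded time interval then yields all three remaining claims: $N$ is nonincreasing, it can decrease only by an even amount, and $N(\bar t^+)<N(\bar t^-)$ precisely when at least two phase transitions collide at $\bar x$.

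The one step beyond bookkeeping is the structural fact that a single Riemann problem never opens more than one phase transition: this is what prevents $N$ from increasing, and I expect it to be the main obstacle, to be settled by the explicit (but routine) case analysis of $\mathcal{R}$ carried out in Section~\ref{sec:aas}.
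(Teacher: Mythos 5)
Your proposal is correct, and for the first claim it follows the same path as the paper: both reduce property~\ref{entropyPT3} of Definition~\ref{def:entropyPT} to the observation that every phase transition produced by the Riemann solver lies in $\mathcal{G}_{\rm e}$ (two small points: the solver is defined in Section~\ref{sec:exactpprR}, Definition~\ref{def:LWR-ARZ}, not in Section~\ref{sec:aas}, and you should say explicitly that the discretized solver $\mathcal{R}^n$ actually used there only samples rarefaction fans inside a single phase, so your structural facts for $\mathcal{R}$ transfer verbatim to $\mathcal{R}^n$). For the counting claims your route is genuinely different. The paper classifies the interactions in which a phase transition can participate (which waves can reach it from which side, with matching invariant $w_2$), showing that an interaction involving exactly one phase transition produces exactly one, and that the only interaction involving more than one is a specific two-transition collision, with middle state $u_{\rm c}=(p^{-1}(W_{\rm c}-V_{\rm c}),V_{\rm c})$, whose outcome is a possibly null shock; monotonicity, the ``iff'', and the parity statement are then read off from this classification. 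You instead use only that consecutive phase transitions alternate orientation and that a Riemann fan contains no phase transition when its extreme states lie in the same phase and exactly one otherwise, getting $N(\bar t^+)-N(\bar t^-)=-m$ or $1-m$ according to the parity of the number $m$ of colliding phase transitions; this is always even, nonpositive, and strictly negative exactly when $m\ge 2$, which yields all three claims at once (including the $t=0$ step). Your bookkeeping is shorter and does not depend on the speed/admissibility analysis of which fronts can catch up with a phase transition, and it covers any conceivable $m$ uniformly; the paper's analysis is heavier but delivers finer structural information, namely which configurations can actually occur (in particular that a multi-transition collision has exactly two transitions and ends in a single shock). Both arguments are complete proofs of Lemma~\ref{lem:DMB}.
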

The proof of the above lemma is postponed to Section~\ref{sec:DMB}.

We conclude the section by giving the main result of this paper in the following

\begin{thm}\label{thm:1}
For any initial datum $\bar{u}$ in $\BV(\R;\Omega)$, the approximate solution for the Cauchy problem~\eqref{eq:model}, \eqref{eq:initial} with initial datum $\bar{u}$ constructed in Section~\ref{sec:aas} converges (up to a subsequence) in $\Lloc1(\R_+\times\R;\Omega)$ to a function $u$ in $\C0(\R_+;\BV(\R;\Omega))$.
Moreover, for any $t,s\ge0$ we have that
\begin{align*}
&\tv(u(t)) \le \tv(\bar{u}),&
&\norma{u(t)}_{\L\infty(\R;\Omega)} \le C,&
&\norma{u(t)-u(s)}_{\L1(\R;\Omega)} \le L \, \modulo{t-s},
\end{align*}
where 
\begin{align*}
&L \doteq \tv(\bar{u}) \, \max\{V_{\max}, p^{-1}(W_{\max}) \, p'(p^{-1}(W_{\max}))\},&
&C \doteq \max\{\modulo{W_{\max}},\modulo{W_{\min}}\} + V_{\max}.
\end{align*}
Finally, if $V_{\max} = V_{\rm f}$, then $u$ is an entropy solution of the Cauchy problem~\eqref{eq:model}, \eqref{eq:initial} in the sense of Definition~\ref{def:entropyPT}.
\end{thm}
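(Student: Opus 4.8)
The plan is to proceed in three steps. \emph{Step 1: uniform bounds and compactness.} Recall that $u^n$ is built in Section~\ref{sec:aas} by approximating $\bar u$ with $\bar u^n\in\PC(\R;\Omega^n)$ such that $\tv(\bar u^n)\le\tv(\bar u)$, $\bar u^n\to\bar u$ in $\Lloc1$ and $\bar u^n$ has finitely many phase transitions, by solving the Riemann problems at the jumps of $\bar u^n$ with the approximate Riemann solver (rarefactions being split into fans of non-entropic fronts of total strength $O(2^{-n})$) and by iterating this at every interaction. The core of the argument is the interaction analysis of Section~\ref{sec:ts}, carried out in the extended Riemann invariant coordinates~\eqref{eq:change}: there the Riemann solver does not increase the total variation at an interaction of classical fronts, while a phase transition can produce a bounded amount of spurious growth that is absorbed by the decrease of a Temple-like potential; the resulting Glimm-type functional is non-increasing in time and strictly decreasing at the interactions involving phase transitions --- whose number is finite by Lemma~\ref{lem:DMB} --- which yields both that $u^n$ has finitely many fronts and interactions on every $[0,T]$ and $\tv(u^n(t))\le\tv(\bar u^n)\le\tv(\bar u)$ for all $t\ge0$. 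Since $u^n(t)$ takes values in $\Omega$, whose image under~\eqref{eq:change} lies in $[W_{\min},W_{\max}]\times[0,V_{\max}]$, one gets $\norma{u^n(t)}\le C$; and since, by~\eqref{H1}, \eqref{H2} and~\eqref{eq:sigma}, every front travels at speed bounded in modulus by $\Lambda\doteq\max\{V_{\max},\,p^{-1}(W_{\max})\,p'(p^{-1}(W_{\max}))\}$, the mass crossing any point in a time lapse $\modulo{t-s}$ is at most $\Lambda\,\tv(\bar u)\,\modulo{t-s}=L\,\modulo{t-s}$. Helly's theorem in space together with this time equicontinuity produces a subsequence converging in $\Lloc1(\R_+\times\R;\Omega)$ to some $u\in\C0(\R_+;\BV(\R;\Omega))$, and the three bounds pass to the limit.

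\emph{Step 2: $u$ is a weak solution.} Each $u^n$ satisfies~\eqref{eq:weakLWR} (resp.\ \eqref{eq:weakARZ}) on test functions supported where $u^n\in\Omega_{\rm f}$ (resp.\ where $u^n\in\Omega_{\rm c}$) up to an error bounded by the total strength of the non-entropic fronts, hence $O(2^{-n})\to0$; so conditions (W.1) and (W.2) of Definition~\ref{def:weakPT} hold for $u$. For (W.3), the phase-transition curves of $u^n$ are finitely many and uniformly Lipschitz by Lemma~\ref{lem:DMB}, so, along a further subsequence, they converge uniformly on compact sets to finitely many Lipschitz curves; the speed identity $\dot x_i=\sigma(u_-,u_+)$ passes to the limit, and since $\mathcal{G}_{\rm w}$ is closed and the one-sided traces of $u^n$ along the converging curves converge to those of $u$, the limiting traces satisfy $(u_-,u_+)\in\mathcal{G}_{\rm w}$. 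Therefore $u$ is a weak solution.

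\emph{Step 3: $u$ is an entropy solution when $V_{\max}=V_{\rm f}$.} In this case the monotonicity of $v_{\rm f}$ in~\eqref{H1} forces $v_{\rm f}\equiv V_{\rm f}$ on $[0,R_{\rm f}'']$, i.e.\ the free characteristic speed $\lambda_{\rm f}(\rho)=v_{\rm f}(\rho)+\rho\,v_{\rm f}'(\rho)$ is constant and the free field is linearly degenerate; hence in the free phase~\eqref{eq:model} reduces to the linear transport equation $\rho_t+V_{\rm f}\,\rho_x=0$, whose only fronts are contact discontinuities, and on these $\mathcal{E}_{\rm LWR}^h$ and $\mathcal{Q}_{\rm LWR}^h=V_{\rm f}\,\mathcal{E}_{\rm LWR}^h$ produce no dissipation. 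Thus $u^n$ satisfies~\eqref{eq:entropyLWR} with equality up to the usual $O(2^{-n})$ error, and (E.1) passes to the limit. In the congested phase the Riemann solver is the Lax solver of ARZ, which dissipates the entropy pairs $(\mathcal{E}_{\rm ARZ}^k,\mathcal{Q}_{\rm ARZ}^k)$ of Definition~\ref{def:solARZ} (see~\cite{BCM2order,BCJMU-order2}), so $u^n$ satisfies~\eqref{eq:entropyARZ} up to a vanishing error and (E.2) holds for $u$. Finally (E.3) is exactly the conclusion of Lemma~\ref{lem:DMB} for $u^n$ and passes to the limit since $\mathcal{G}_{\rm e}$ is closed and the traces converge; hence $u$ is an entropy solution of~\eqref{eq:model}, \eqref{eq:initial} in the sense of Definition~\ref{def:entropyPT}.

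The main obstacle is the interaction analysis of Step~1: one has to produce a functional that does not increase across \emph{every} interaction pattern, in particular when a phase transition meets a classical wave or when two phase transitions collide, where the total variation measured in the conserved variables $(\rho,\rho\,v)$ may genuinely grow; this is exactly what forces the use of the extended Riemann invariants~\eqref{eq:change} and of the Temple-like potential, and it accounts for most of Section~\ref{sec:ts}. A second, subtler point is that the linear degeneracy of the free field in Step~3 is not merely convenient but necessary in this approach: for a genuinely non-linear $v_{\rm f}$ the Kruzhkov inequalities~\eqref{eq:entropyLWR} would impose an admissibility constraint on the free-phase shocks that the wave-front tracking construction near the phase boundary does not obviously satisfy, whereas linear degeneracy turns every free-phase front into a contact discontinuity and removes the difficulty.
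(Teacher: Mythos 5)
Your Step~1 is essentially the paper's Section~\ref{sec:conv} and is fine (one nuance: in the Riemann-invariant norm the interaction analysis gives $\Delta\tv\le 0$ in \emph{every} case; the Temple-like functional $\mathcal{T}^n$ is not needed to absorb TV growth but to bound the number of fronts, since it drops by at least $\varepsilon^n_w$ whenever a front is created). The genuine gap is in Steps~2--3, where you pass the phase-dependent conditions and the trace conditions to the limit directly. First, conditions \ref{weakPT1}, \ref{weakPT2}, \ref{entropyPT1}, \ref{entropyPT2} are formulated for test functions supported where the \emph{limit} $u$ lies in $\Omega_{\rm f}$ (resp.\ $\Omega_{\rm c}$); the fact that each $u^n$ satisfies the corresponding identities for test functions supported where $u^n$ lies in that phase does not transfer, because the phase regions of $u^n$ need not converge to those of $u$ (a test function supported in $u^{-1}(\Omega_{\rm f})$ may meet congested values of $u^n$ for every $n$). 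Second, your treatment of \ref{weakPT3} and \ref{entropyPT3} rests on the assertion that the one-sided traces of $u^n$ along the (subsequentially convergent) phase-transition curves converge to the traces of $u$; $\Lloc1$ convergence plus uniform $\BV$ bounds do not give this, and no argument is offered. (Closedness of the admissibility sets is also delicate: $\mathcal{G}_1$ involves $\Omega_{\rm f}'$, which is not closed.) These are precisely the difficulties the paper flags after Theorem~\ref{thm:1} as the reason for not arguing front-by-front in the limit.

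The paper's route is different and avoids both problems: under $V_{\max}=V_{\rm f}$ it proves that the limit $u$ satisfies the \emph{global} integral identities~\eqref{eq:weakPT}, valid for arbitrary test functions, using the continuous extension $W(u)=\max\{w_2(u),W_{\rm c}\}$ of the Lagrangian marker (Lemma~\ref{lemma:AnimalsAsLeaders}; the point is that whenever $W$ jumps across a front of $u^n$ the velocity is continuous there, so Rankine--Hugoniot holds exactly for $\rho\,W(u)$ as well --- this is where $V_{\max}=V_{\rm f}$ enters already at the level of the \emph{weak} formulation, whereas your Step~2 claims the weak-solution property without this hypothesis); and the global entropy inequality~\eqref{eq:entropyPT} with the adapted pairs $(\mathcal{E}^k,\mathcal{Q}^k)$ for all $k\in[0,V_{\rm f}]$, including $k>V_{\rm c}$ (Lemma~\ref{lemma:entropyaltPT}), with the rarefaction error handled as in the ARZ analysis. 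Conditions \ref{weakPT1}--\ref{weakPT3} and \ref{entropyPT1}--\ref{entropyPT3} are then deduced for $u$ itself: \ref{weakPT3} from the Rankine--Hugoniot relations implied by~\eqref{eq:weakPT} (giving $\rho_-\rho_+[W(u_-)-W(u_+)]=0$, which characterizes $\mathcal{G}_{\rm w}$), and \ref{entropyPT3} by contradiction from~\eqref{eq:entropyPT} applied at the limit's own phase-transition jumps with $k\in\,]V_{\rm c},V_{\rm f}[$ and $k=V_{\rm c}$. To repair your proof you would either have to justify trace convergence along the phase boundaries (a substantial task) or switch to global, phase-independent formulations as the paper does; as written, Steps~2 and~3 do not go through.
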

\noindent
The proof is based on the wave-front tracking algorithm described in Section~\ref{sec:wft} and is deferred to Section~\ref{sec:1}.
It is worth to mention here that it is not easy to prove that the constructed solutions satisfy the conditions listed in Definition~\ref{def:weakPT} or Definition~\ref{def:entropyPT}.
For this reason, in Lemma~\ref{lemma:AnimalsAsLeaders} and Lemma~\ref{lemma:entropyaltPT} we rather prove that the constructed solutions satisfy integral conditions, respectively~\eqref{eq:weakPT} and~\eqref{eq:entropyPT}, from which we easily deduce the conditions given respectively in Definition~\ref{def:weakPT} and Definition~\ref{def:entropyPT}.

\section{Example}\label{sec:ex}

In this section we apply the phase transition model~\eqref{eq:model} to simulate the traffic across a traffic light.
Typically, one is interested in computing the minimal time necessary to let all these vehicles pass through the traffic light.
For this reason we will construct the solution only in the upstream of the traffic light.
The simulation is presented in \figurename~\ref{fig:02} and is obtained by explicit analysis of the wave-front interactions, with computer-assisted computation of interaction times and front slopes.
While the overall picture of the corresponding solution is rather stable, a detailed analytical study necessarily needs to consider many slightly different cases.
Below, we restrict the construction of the solution to the most representative situation.
\begin{figure}
      \centering\begin{psfrags}\tiny
      \psfrag{r}[r,b]{$\rho$}
      \psfrag{q}[c,t]{$\rho\,v$}
        \includegraphics[width=.32\textwidth]{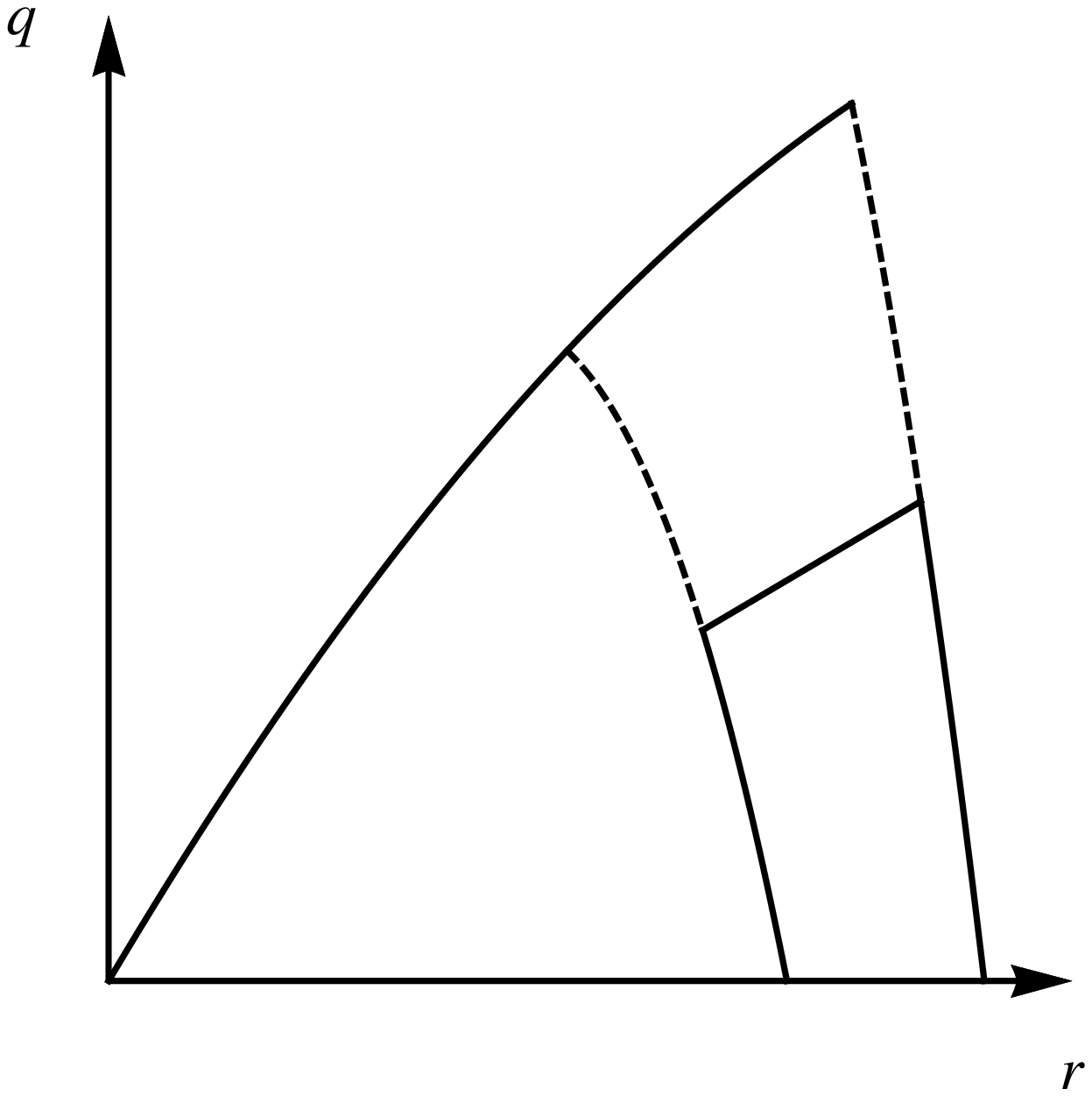}~
      \psfrag{x}[c,b]{$x$}
      \psfrag{t}[c,b]{$t$}
      \psfrag{a}[c,c]{$\mathcal{PT}_1$}
      \psfrag{b}[c,c]{$\mathcal{PT}_1'$}
      \psfrag{c}[c,c]{$\mathcal{PT}_1''$}
      \psfrag{d}[c,c]{$\mathcal{S}_1$}
      \psfrag{e}[c,c]{$\mathcal{C}_2$}
      \psfrag{f}[c,c]{$\mathcal{C}_2'$}
      \psfrag{g}[c,c]{$\mathcal{C}_2''$}
      \psfrag{h}[c,c]{$\mathcal{S}_2$}
      \psfrag{i}[c,c]{$\mathcal{R}_\ell$}
      \psfrag{l}[c,c]{$\mathcal{PT}_2$}
      \psfrag{m}[c,c]{$\mathcal{R}_\ell'$}
      \psfrag{n}[c,c]{$\mathcal{PT}_2'$}
        \includegraphics[width=.32\textwidth]{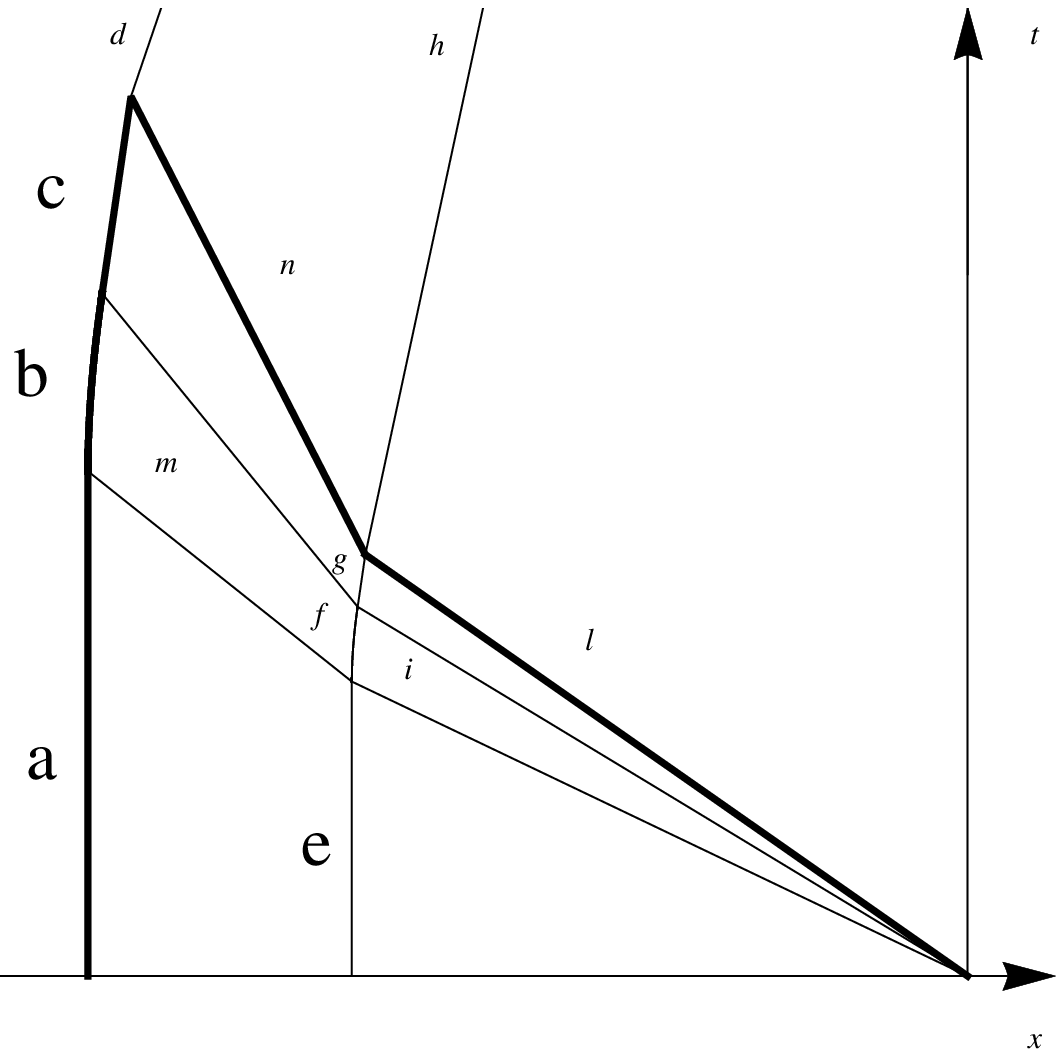}~
      \psfrag{x}[r,b]{$x$}
      \psfrag{t}[c,t]{$t$}
      \psfrag{1}[c,b]{$x_1$}
      \psfrag{2}[c,b]{$x_2$}
      \psfrag{a}[r,t]{$a_1$}
      \psfrag{b}[r,b]{$b_1$}
      \psfrag{c}[c,b]{$c_{1_{\vphantom{\displaystyle{1^1}}}}$} 
      \psfrag{d}[r,t]{$a_2$}
      \psfrag{e}[r,b]{$b_2$}
      \psfrag{f}[c,b]{$c_{2_{\vphantom{\displaystyle{1^1}}}}$}     
      \psfrag{g}[r,b]{$d_{1_{\vphantom{{1^1}}}}$}     
      \psfrag{h}[l,t]{~~$d_2$}
        \includegraphics[width=.32\textwidth]{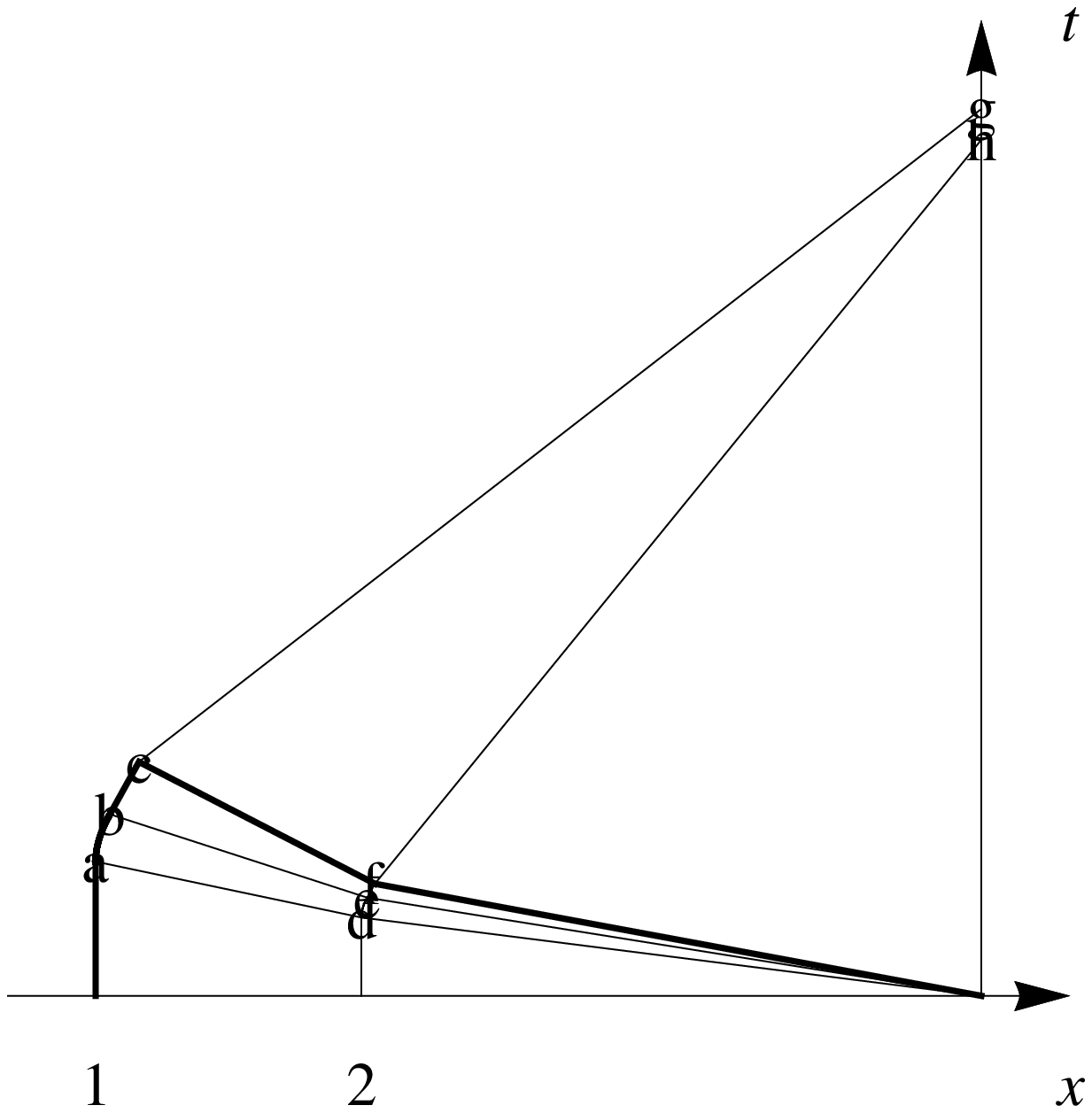}
      \end{psfrags}
      \caption{The solution constructed in Section~\ref{sec:ex} and corresponding to the numerical data~\eqref{eq:numericaldata}. The bold segments in the last two pictures correspond to phase transitions.}
\label{fig:02}
\end{figure}

More specifically, let
\begin{align*}
&v_{\rm f}(\rho) \doteq V_{\max}\left(1-\rho\right),&
&p(\rho) \doteq \rho^\gamma,~\gamma>0,
\end{align*}
and fix two constants $W_{\max}>W_{\rm c}>0$ such that the conditions~\eqref{H1}, \eqref{H2} and~\eqref{H3} are satisfied.
Consider two types of vehicles, the ``long vehicles'' characterized by the Lagrangian marker $W_{\rm c}$, and the ``short vehicles'' characterized by the Lagrangian marker $W_{\max}$.
Observe that the length of the short vehicles, $1/p^{-1}(W_{\max})$, is lower than that one of the long vehicles, $1/p^{-1}(W_{\rm c})$.

Place in $x=0$ a traffic light that turns from red to green at time $t=0$.
Assume that at time $t=0$ all the vehicles are stop in $\left[x_1,0\right[$.
More precisely, assume that the long vehicles are uniformly distributed in $\left[x_1,x_2\right[$ with density $R_{\rm c} = p^{-1}(W_{\rm c})$ and the short vehicles are uniformly distributed in $\left[x_2,0\right[$ with density $R_{\max} = p^{-1}(W_{\max})$.
The corresponding initial datum is then, see \figurename~\ref{fig:simuprophiles},
\begin{align*}
&\rho(0,x)=
\begin{cases}
R_{\rm c}&\text{if } x_1\le x<x_2,\\
R_{\max}&\text{if } x_2\le x<0,\\
0&\text{otherwise},
\end{cases}
&v(0,x)=
\begin{cases}
V_{\max}&\text{if } x<x_1,\\
0&\text{if } x_1\le x<0,\\
V_{\max}&\text{if } x\ge 0.
\end{cases}
\end{align*}

As a first step in the construction of the solution we have to consider the Riemann problems at $(t,x) \in \{(0,x_1),(0,x_2),(0,0)\}$.
We obtain that:
\begin{itemize}[leftmargin=*]
\item
The Riemann problem in $(0,x_1)$ is solved by a stationary phase transition $\mathcal{PT}_1$ from the vacuum state $(0,V_{\max})$ to $(R_{\rm c}, 0)$.
\item
The Riemann problem in $(0,x_2)$ is solved by a stationary contact discontinuity $\mathcal{C}_2$ from $(R_{\rm c}, 0)$ to $(R_{\max}, 0)$.
\item
The Riemann problem associated to $x=0$ is solved by a rarefaction $\mathcal{R}_\ell$ with support in the cone
\[
\left\{ (x,t) \in \R\times\R_+ \colon \lambda_1(R_{\max}, 0) \, t \le x \le \lambda_1(p^{-1}(W_{\max}-V_{\rm c}), V_{\rm c}) \, t \right\},
\]
and taking values $\mathcal{R}_\ell(x/t)$, where
\[
\begin{array}{@{}c@{\,}c@{\,}c@{\,}c@{\,}c@{}}
\mathcal{R}_\ell \colon&
\left[\lambda_1\left(R_{\max}, 0\right) , \lambda_1\left(p^{-1}(W_{\max}-V_{\rm c}), V_{\rm c}\right)\right]&
\to&
\Omega_{\rm c}
\\
&\xi&\mapsto&
\begin{pmatrix}
\rho_{\mathcal{R}_\ell}\left(\xi\right)
\\
v_{\mathcal{R}_\ell}\left(\xi\right)
\end{pmatrix}
&\doteq
\begin{pmatrix}
\mathfrak{R}_{\rm c}\left(W_{\max}-\xi\right),\\
W_{\max} - p\left(\rho_{\mathcal{R}_\ell}\left(\xi\right)\right)
\end{pmatrix},
\end{array}
\]
being $\mathfrak{R}_{\rm c}$ the inverse function of $\rho \mapsto p(\rho)+\rho\,p'(\rho)$, followed by a phase transition $\mathcal{PT}_2$ from $(p^{-1}(W_{\max}-V_{\rm c}),V_{\rm c})$ to $(R_{\rm f}'',V_{\rm f})$, followed by a rarefaction $\mathcal{R}_r$ with support in the cone
\[
\left\{ (x,t) \in \R\times\R_+ \colon \lambda_{\rm f}(R_{\rm f}'') \, t \le x \le \lambda_{\rm f}(0) \, t \right\},
\]
and taking values $\mathcal{R}_r(x/t)$, where
\[
\begin{array}{@{}c@{\,}c@{\,}c@{\,}c@{\,}c@{}}
\mathcal{R}_r \colon&
\left[\lambda_{\rm f}(R_{\rm f}'') , \lambda_{\rm f}(0)\right]&
\to&
\Omega_{\rm f}
\\
&\xi&\mapsto&
\begin{pmatrix}
\rho_{\mathcal{R}_r}\left(\xi\right)
\\
v_{\mathcal{R}_r}\left(\xi\right)
\end{pmatrix}
&\doteq
\begin{pmatrix}
\mathfrak{R}_{\rm f}\left(\xi\right)\\
v_{\rm f}\left(\rho_{\mathcal{R}_r}\left(\xi\right)\right)
\end{pmatrix},
\end{array}
\]
being $\mathfrak{R}_{\rm f}$ the inverse function of $\rho \mapsto v_{\rm f}(\rho)+\rho\,v_{\rm f}'(\rho)$.
\end{itemize}

\begin{figure}
      \centering
      \begin{psfrags}
      \psfrag{x}[c,c]{$t$}
      \psfrag{t}[c,c]{$x$}
      \psfrag{r}[c,c]{$\rho$}
      \psfrag{v}[c,c]{$v$}
      \includegraphics[width=.49\textwidth]{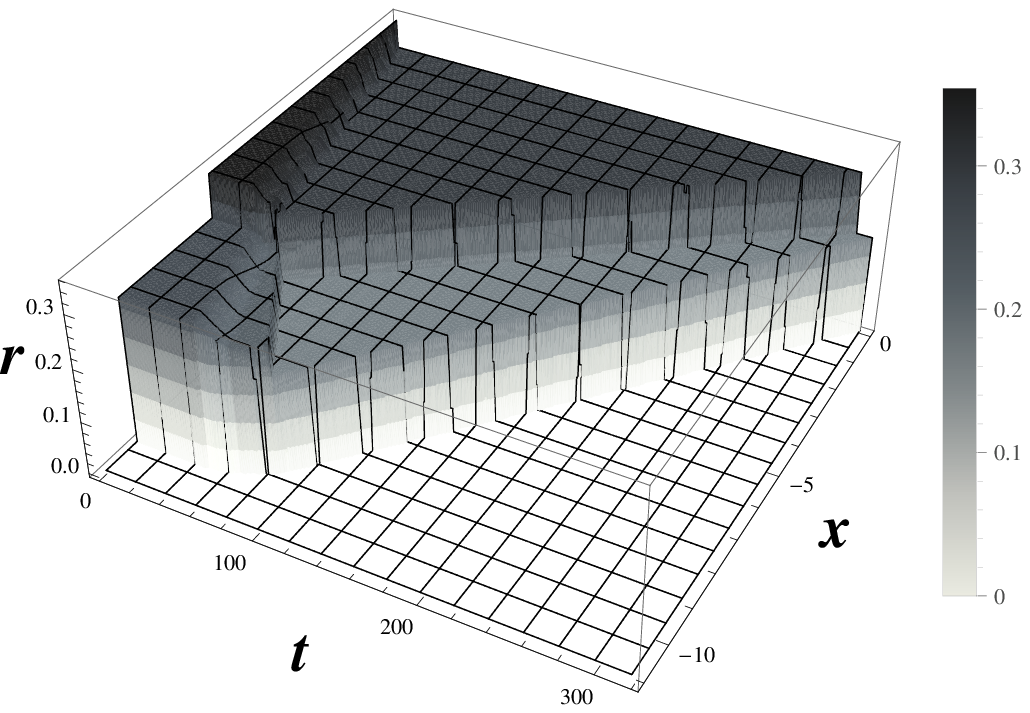}\hfill
      \psfrag{x}[c,c]{$x$}
      \psfrag{t}[c,c]{$t$}
      \includegraphics[width=.49\textwidth]{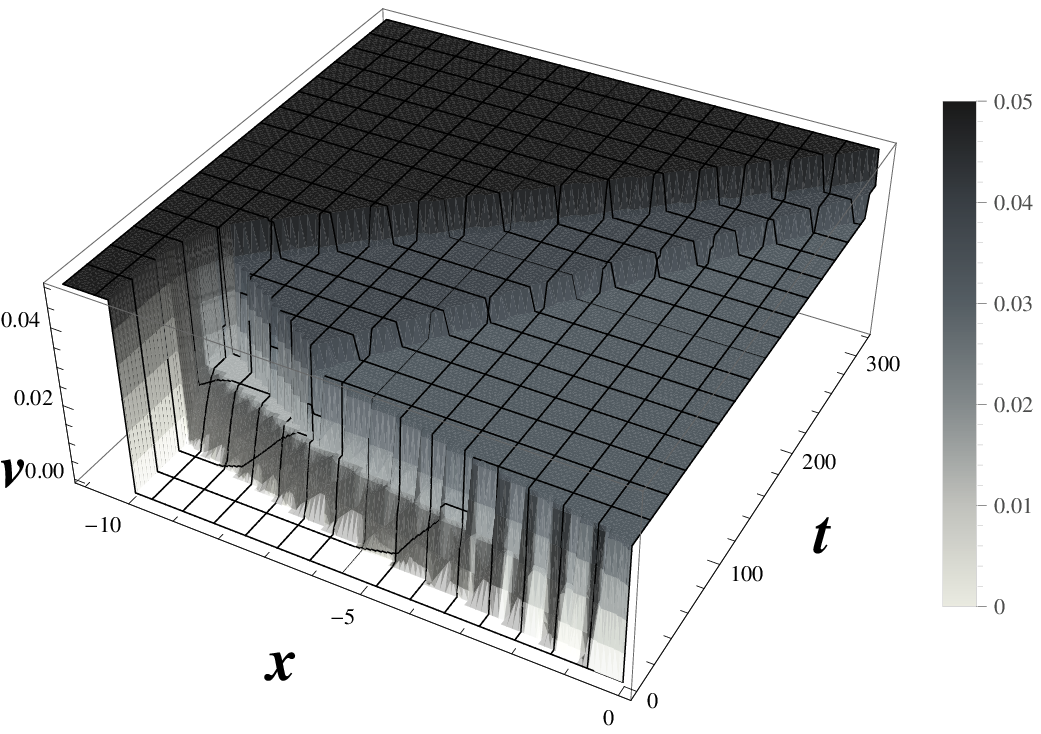}
      \end{psfrags}
      \caption{The solution $(t,x)\mapsto u(u,x) = \left(\rho(t,x),u(t,x)\right)$ contructed in Section~\ref{sec:ex} and corresponding to the numerical data~\eqref{eq:numericaldata}.}
      \label{fig:simuxtrExtv}
\end{figure}

To prolong then the solution, we have to consider the Riemann problems corresponding to each interaction as follows:
\begin{itemize}[leftmargin=*]
\item
The contact discontinuity $\mathcal{C}_2$ meets the rarefaction $\mathcal{R}_\ell$ in $a_2$ given by
\begin{align*}
&a_2\colon&
&x_{a_2}\doteq x_2,&
&t_{a_2}\doteq \frac{x_2}{\lambda_1\left(R_{\max}, 0\right)} .
\end{align*}
The result of this interaction is a contact discontinuity $\mathcal{C}_2'$, which accelerates during its interaction with $\mathcal{R}_\ell$ according to the following ordinary differential equation
\begin{align*}
&\mathcal{C}_2'\colon&
&\dot{x}_{\mathcal{C}_2'}(t) = v_{\mathcal{R}_\ell}\left(\frac{x_{\mathcal{C}_2'}(t)}{t}\right),&
&x_{\mathcal{C}_2'}(t_{a_2})=x_{a_2} .
\end{align*}
\item
The contact discontinuity $\mathcal{C}_2'$ stops to interact with the rarefaction $\mathcal{R}_\ell$ once it reaches $b_2$, implicitly given by
\begin{align*}
&b_2\colon&
&x_{b_2} = x_{\mathcal{C}_2'}(t_{b_2}),&
&x_{b_2} = t_{b_2} \, \lambda_1\left(p^{-1}(W_{\max}-V_{\rm c}),V_{\rm c}\right).
\end{align*}
Then, a contact discontinuity $\mathcal{C}_2''$ from $(p^{-1}(W_{\rm c}-V_{\rm c}),V_{\rm c})$ to $(p^{-1}(W_{\max}-V_{\rm c}),V_{\rm c})$ starts from $b_2$.

\item
The contact discontinuity $\mathcal{C}_2''$ reaches the phase transition $\mathcal{PT}_2$ in $c_2$, implicitly given by
\begin{align*}
&c_2\colon&
&x_{c_2} = t_{c_2} \, \sigma\left(p^{-1}(W_{\max}-V_{\rm c}),V_{\rm c},R_{\rm f}'',V_{\rm f}\right),&
&x_{c_2} - x_{b_2} = \left(t_{c_2} - t_{b_2}\right) V_{\rm c} .
\end{align*}
The result of this interaction is a phase transition $\mathcal{PT}_2'$ from $(p^{-1}(W_{\rm c}-V_{\rm c}),V_{\rm c})$ to $(R_{\rm f}',v_{\rm f}(R_{\rm f}'))$, followed by a shock $\mathcal{S}_2$ from $(R_{\rm f}',v_{\rm f}(R_{\rm f}'))$ to $(R_{\rm f}'',V_{\rm f})$.

\item
Each point of $\{(\mathcal{C}_2'(t),t) \colon t_{a_2}\leq t \leq t_{b_2}\}$, is the center of a rarefaction appearing on its left.
Denote by $\mathcal{R}_\ell'$ the juxtaposition of these rarefactions.
In order to compute the values attained by $\mathcal{R}_\ell'$ it is sufficient to apply the following rules:
\begin{itemize}[leftmargin=*]
\item
The velocity $v$ is continuous across the contact discontinuity $\mathcal{C}_2'$.
\item
The Lagrangian marker of the solution takes the constant value $W_{\rm c}$ in $\mathcal{R}_\ell'$.
\item
For any $(x_0,t_0) \in \{(\mathcal{C}_2'(t),t) \colon t_{a_2}\leq t \leq t_{b_2}\}$ and $t>t_0$ sufficiently small, the density $\rho$ in $\mathcal{R}_\ell'$ is constant along  
\begin{align}\label{eq:rays}
&\mathcal{P}\colon&
&x=x_0 + (t-t_0) \, \lambda_1
\left( 
p^{-1}\left(W_{\rm c} - v_{\mathcal{R}_\ell}\left(\frac{x_0}{t_0}\right)\right),
v_{\mathcal{R}_\ell}\left(\frac{x_0}{t_0}\right)
\right) .
\end{align}
\end{itemize}
As a consequence, the value of $\rho_{\mathcal{R}_\ell'}$ at any point $(t,x)$ of the rarefaction $\mathcal{R}_\ell'$ is equal to $p^{-1}(W_{\rm c}-v_{\mathcal{R}_\ell})$ computed at $(t_0 , x_{\mathcal{C}_2'}(t_0))$, with $t_0 = \mathcal{P}(t,x)$ obtained by ``projecting'' $(t,x)$ along~\eqref{eq:rays} to a point of $\mathcal{C}_2'$, namely
\begin{align*}
&\mathcal{R}_\ell'\colon&
&\rho_{\mathcal{R}_\ell'}(t,x) \doteq p^{-1}\left(W_{\rm c}-v_{\mathcal{R}_\ell}\left(\frac{x_{\mathcal{C}_2'}\left(\mathcal{P}(t,x)\right)}{\mathcal{P}(t,x)}\right)\right),&
&v_{\mathcal{R}_\ell'}(t,x) \doteq v_{\mathcal{R}_\ell}\left(\frac{x_{\mathcal{C}_2'}\left(\mathcal{P}(t,x)\right)}{\mathcal{P}(t,x)}\right).
\end{align*}
Observe that by definition $\mathcal{P}(t,x)$ belongs to $\left[t_{a_2},t_{b_2}\right]$ for all $(t,x)$ in $\mathcal{R}_\ell'$ and that, beside the density $\rho$, also the velocity $v$ is constant along~\eqref{eq:rays}.

\item
The stationary phase transition $\mathcal{PT}_1$ meets the rarefaction $\mathcal{R}_\ell'$ in $a_1$ given by
\begin{align*}
&a_1\colon&
&x_{a_1} \doteq x_1,&
&t_{a_1} \doteq  t_{a_2} + \frac{x_1-x_2}{\lambda_1(R_{\rm c},0)}.
\end{align*}
The result of this interaction is a phase transition $\mathcal{PT}_1'$, which accelerates during its interaction with $\mathcal{R}_\ell'$ according to the following ordinary differential equation
\begin{align*}
&\mathcal{PT}_1' \colon&
&\dot{x}_{\mathcal{PT}_1'}(t) = v_{\mathcal{R}_\ell'}\left(t,x_{\mathcal{PT}_1'}(t)\right),&
&x_{\mathcal{PT}_1'}(t_{a_1})=x_{a_1}.
\end{align*}

\item
The phase transition $\mathcal{PT}_1'$ stops to interact with the rarefaction $\mathcal{R}_\ell'$ once it reaches $b_1$, implicitly given by
\begin{align*}
&b_1\colon&
&x_{b_1} = x_{\mathcal{PT}_1'}(t_{b_1}),&
&x_{b_1}-x_{b_2}= \left(t_{b_2}-t_{b_1}\right) \lambda_1\left(p^{-1}(W_{\rm c}-V_{\rm c}),V_{\rm c}\right).
\end{align*}
Then, a phase transition $\mathcal{PT}_1''$ from $(0,V_{\max})$ to $(p^{-1}(W_{\rm c}-V_{\rm c}),V_{\rm c})$ starts from $b_1$.

\item
The phase transitions $\mathcal{PT}_1''$ and $\mathcal{PT}_2'$ meet in $c_1$, implicitly given by
\begin{align*}
&c_1\colon&
&\frac{x_{c_1} - x_{b_1}}{t_{c_1} - t_{b_1}} = V_{\rm c},&
&\frac{x_{c_1} - x_{c_2}}{t_{c_1} - t_{c_2}} = \sigma\left(p^{-1}(W_{\rm c}-V_{\rm c}),V_{\rm c},R_{\rm f}',v_{\rm f}(R_{\rm f}')\right).
\end{align*}
The result of this interaction is a shock $\mathcal{S}_1$ from $(0,V_{\max})$ to $(R_{\rm f}',v_{\rm f}(R_{\rm f}'))$.

\end{itemize}

\begin{figure}
\centering
\renewcommand{\arraystretch}{1.5}
\begin{tabular}{>{\centering\bfseries}m{0.025\hsize} @{}>{\centering}m{0.218\hsize} @{}>{\centering}m{0.218\hsize} @{}>{\centering\arraybackslash}m{0.218\hsize} @{}>{\centering\arraybackslash}m{0.218\hsize}}
&$t=0$&$t=t_{a_2}$&$t=\frac{1}{2}\left[t_{a_2}+t_{b_2}\right]$&$t=t_{b_2}$\vspace{2pt}
\\
\rotatebox{90}{$x\mapsto \rho(t,x)$}&
{\begin{psfrags}
\psfrag{x}[l,c]{$x$}
\psfrag{r}[l,t]{$\rho$}
\includegraphics[width=.8\hsize]{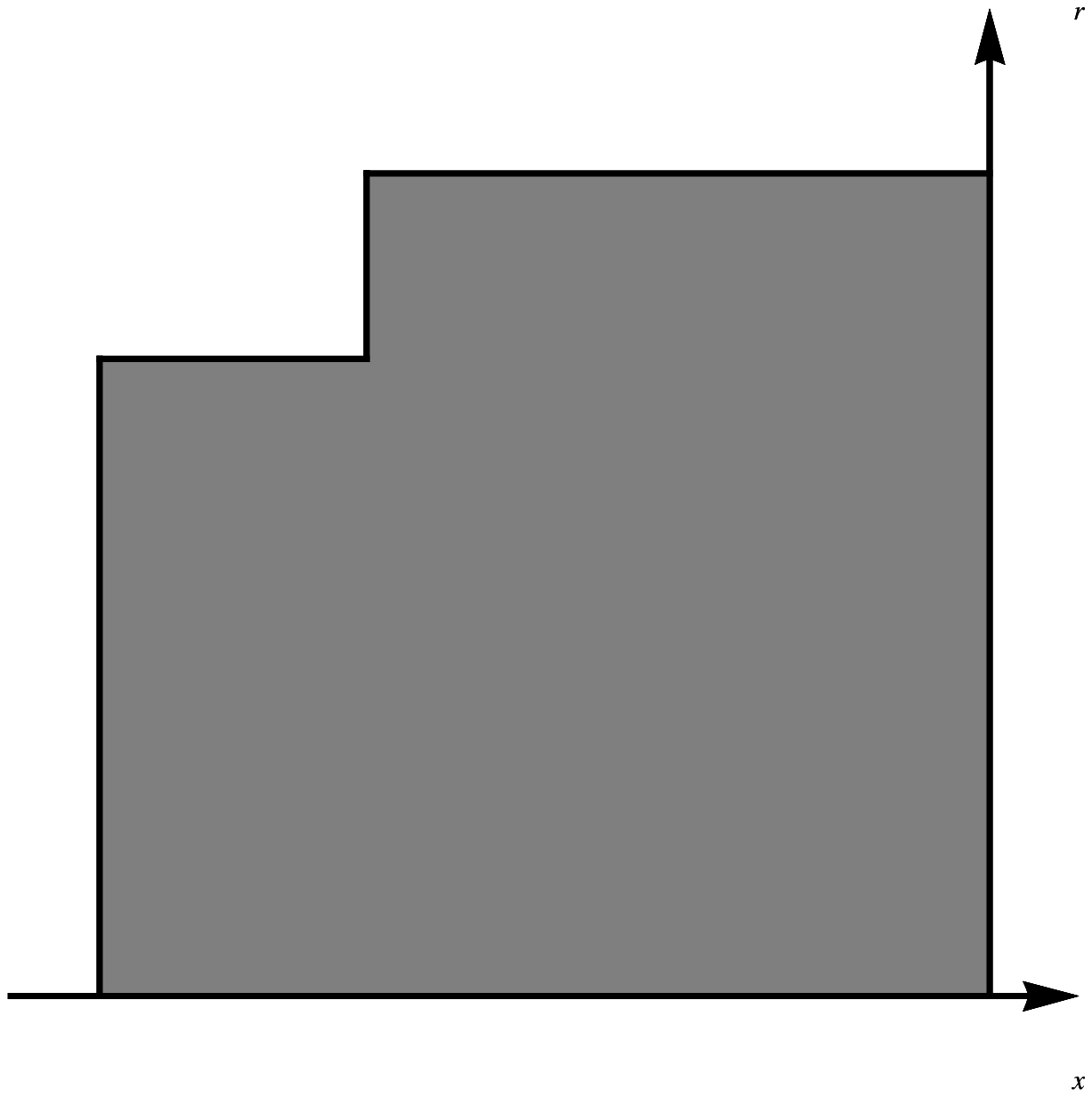}
\end{psfrags}}&
{\begin{psfrags}
\psfrag{x}[l,c]{$x$}
\psfrag{r}[l,t]{$\rho$}
\includegraphics[width=.8\hsize]{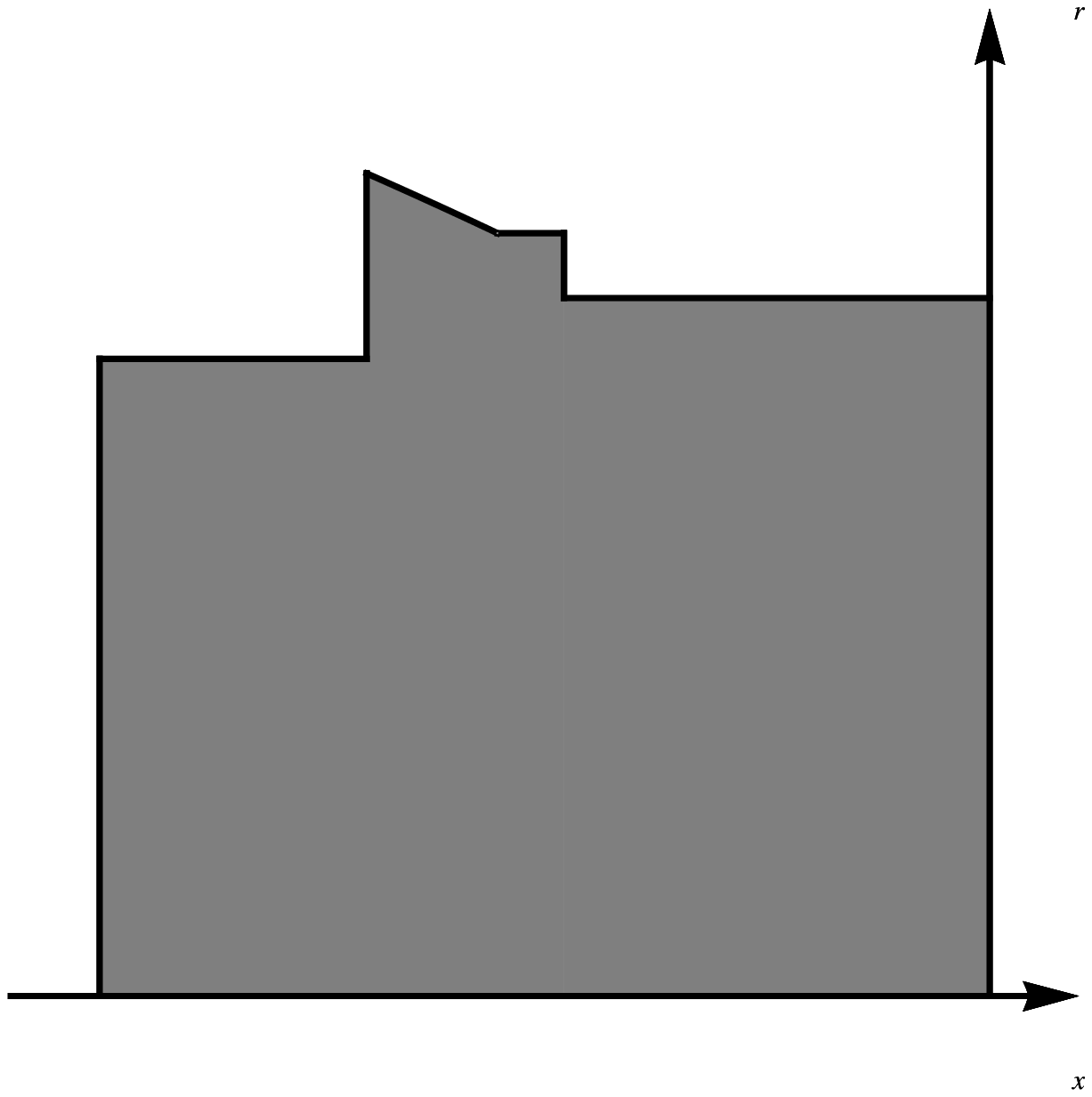}
\end{psfrags}}&
{\begin{psfrags}
\psfrag{x}[l,c]{$x$}
\psfrag{r}[l,t]{$\rho$}
\includegraphics[width=.8\hsize]{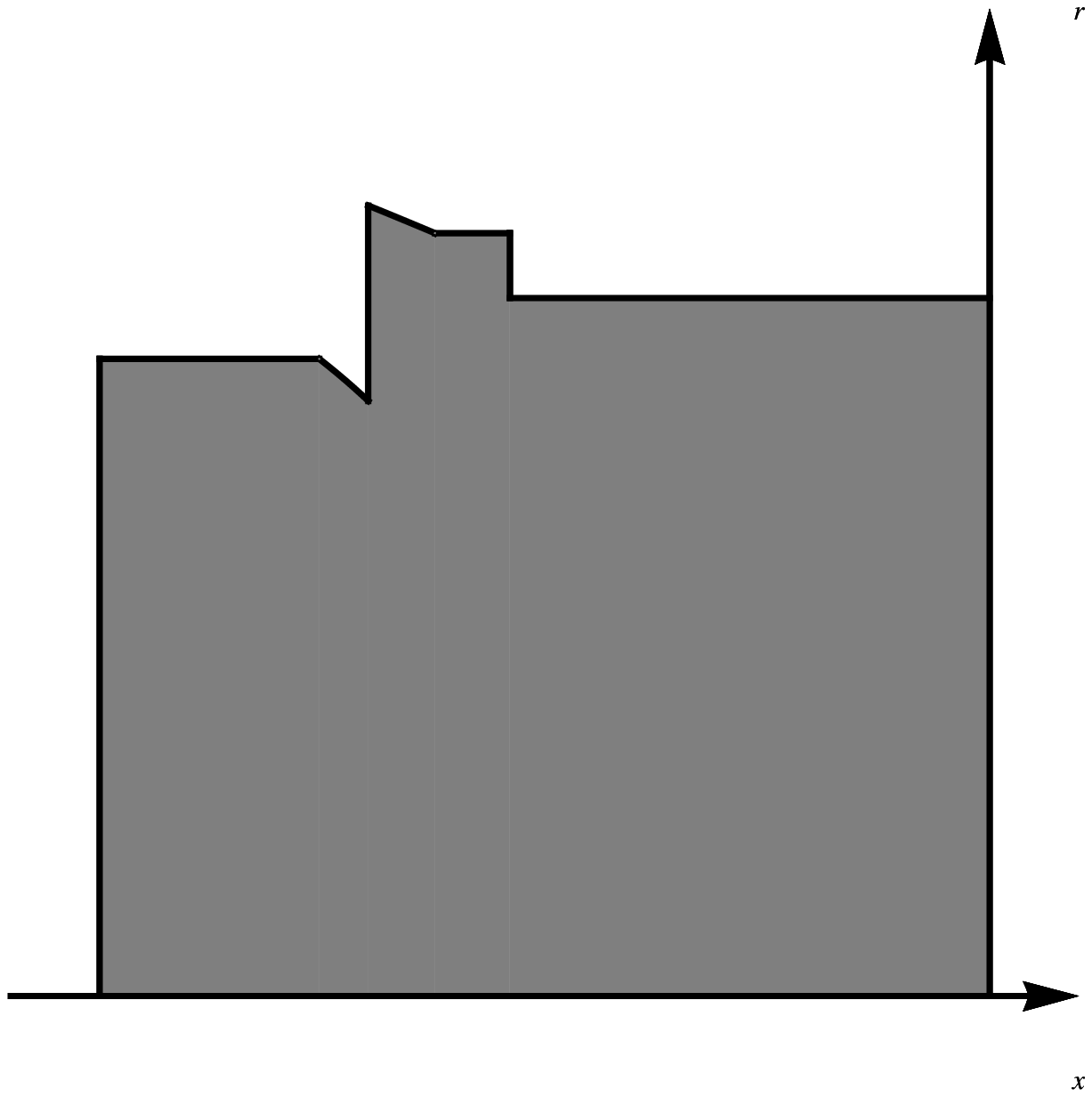}
\end{psfrags}}&
{\begin{psfrags}
\psfrag{x}[l,c]{$x$}
\psfrag{r}[l,t]{$\rho$}
\includegraphics[width=.8\hsize]{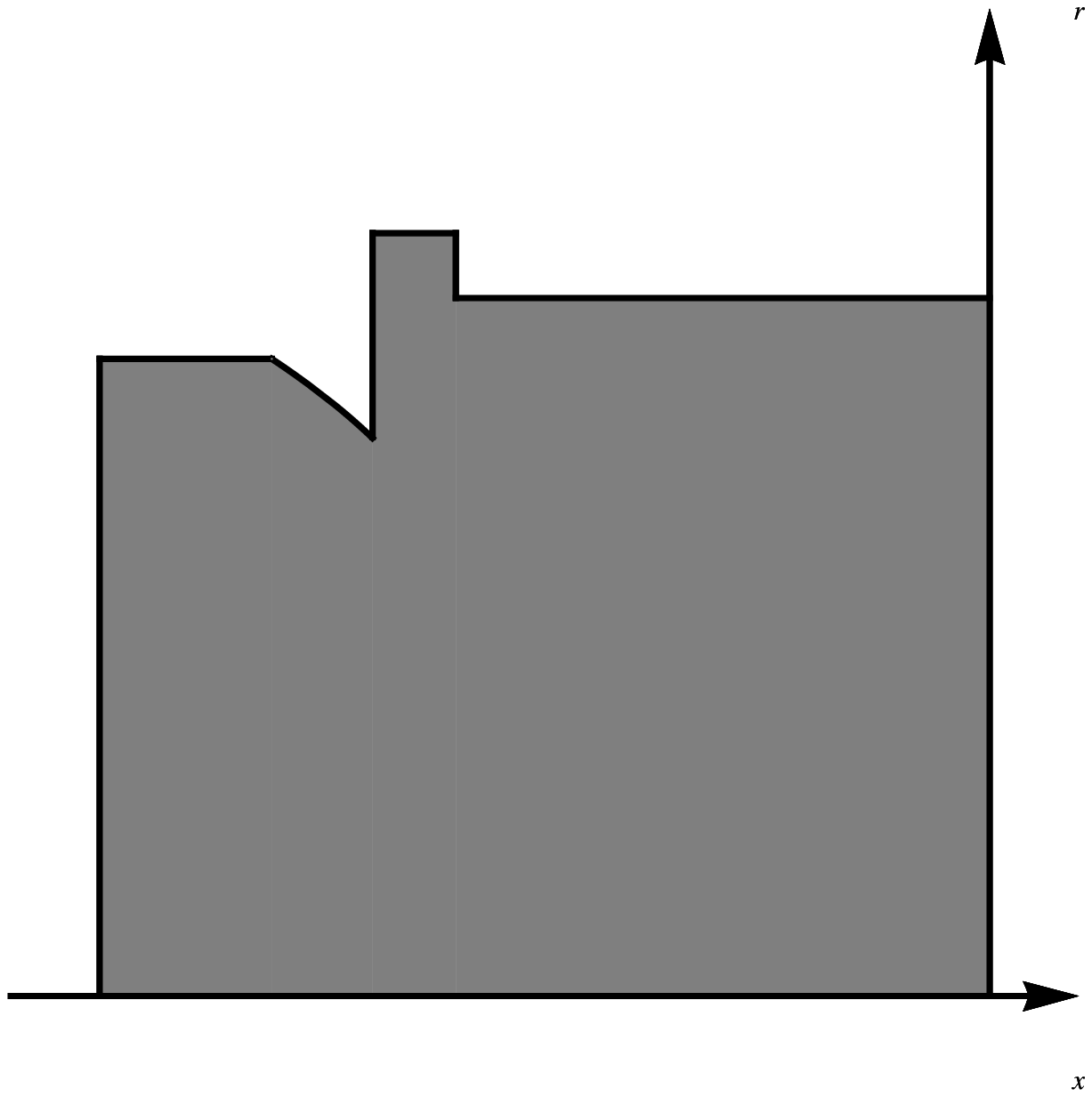}
\end{psfrags}}\vspace{2pt}
\\
\rotatebox{90}{$x\mapsto v(t,x)$}&
{\begin{psfrags}
\psfrag{x}[l,c]{$x$}
\psfrag{v}[l,t]{$v$}
\includegraphics[width=.8\hsize]{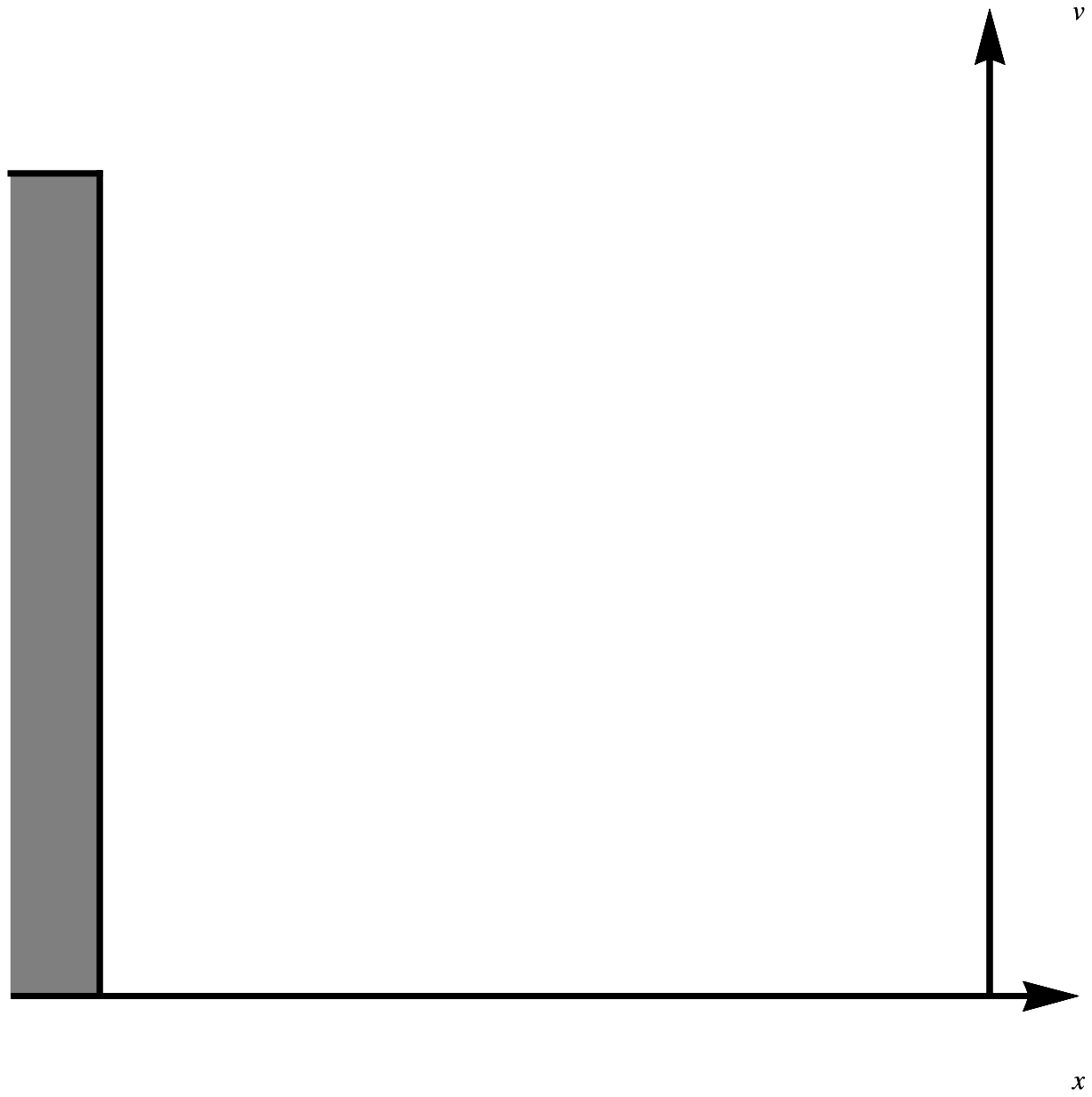}
\end{psfrags}}&
{\begin{psfrags}
\psfrag{x}[l,c]{$x$}
\psfrag{v}[l,t]{$v$}
\includegraphics[width=.8\hsize]{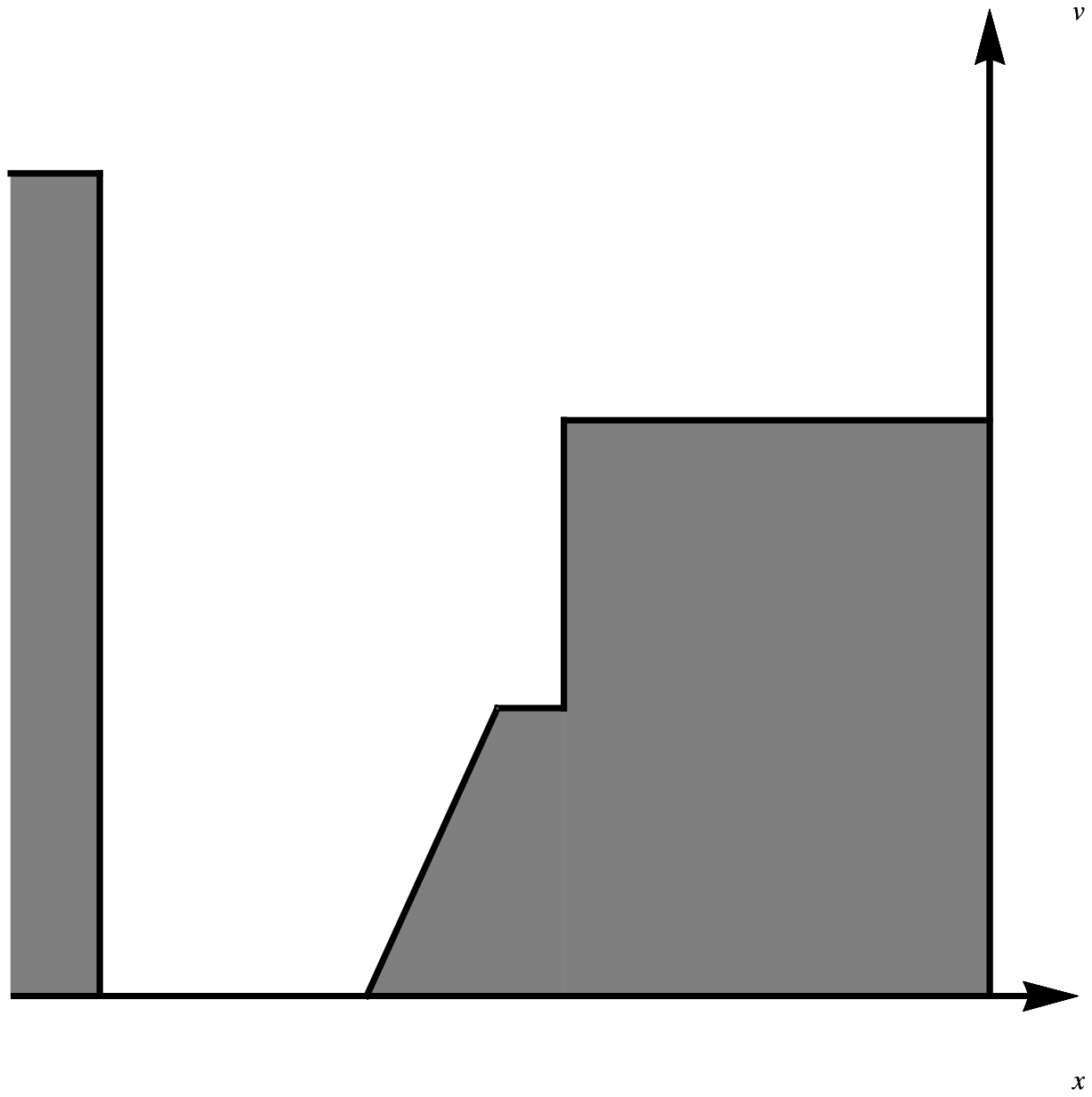}
\end{psfrags}}&
{\begin{psfrags}
\psfrag{x}[l,c]{$x$}
\psfrag{v}[l,t]{$v$}
\includegraphics[width=.8\hsize]{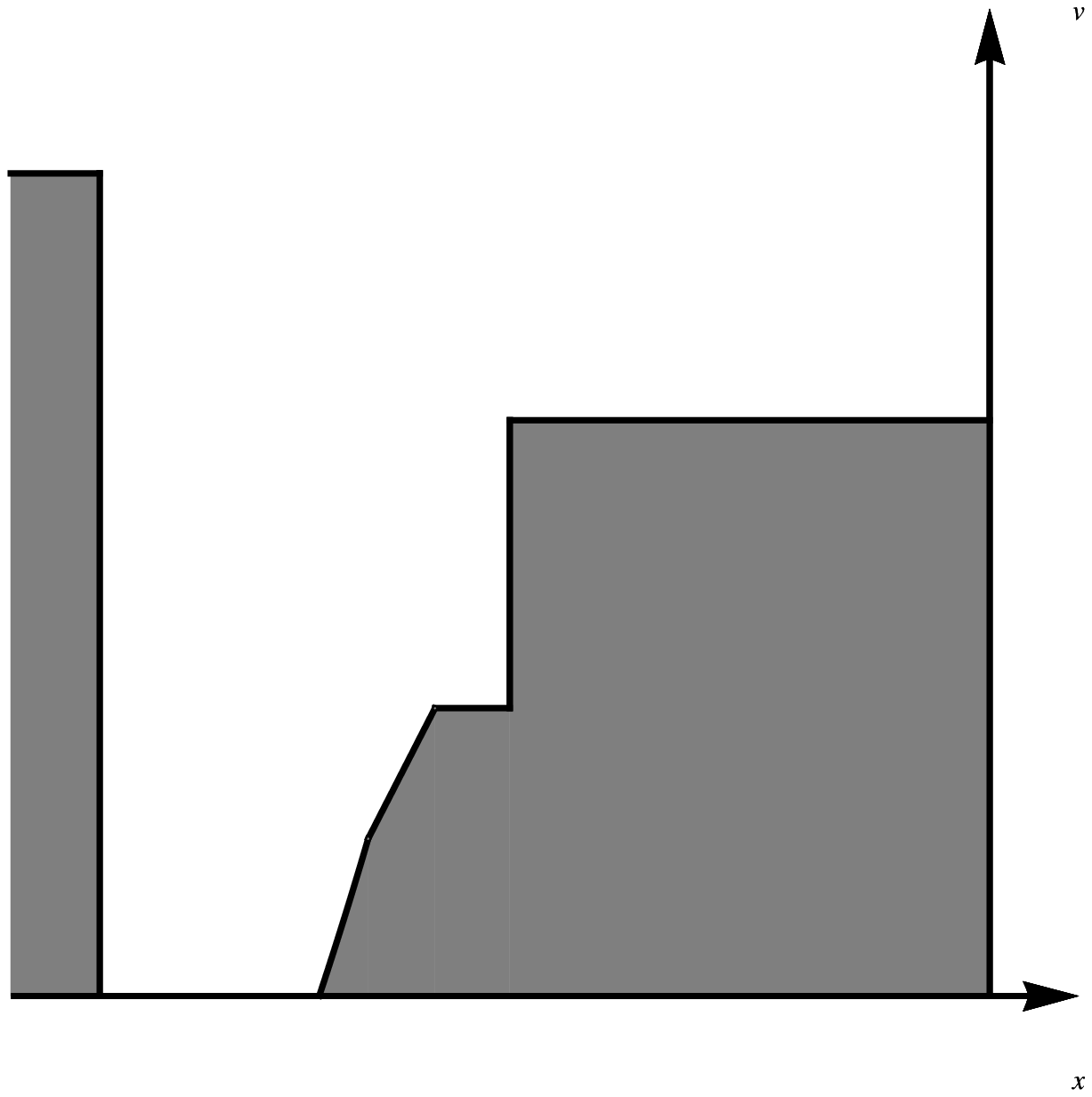}
\end{psfrags}}&
{\begin{psfrags}
\psfrag{x}[l,c]{$x$}
\psfrag{v}[l,t]{$v$}
\includegraphics[width=.8\hsize]{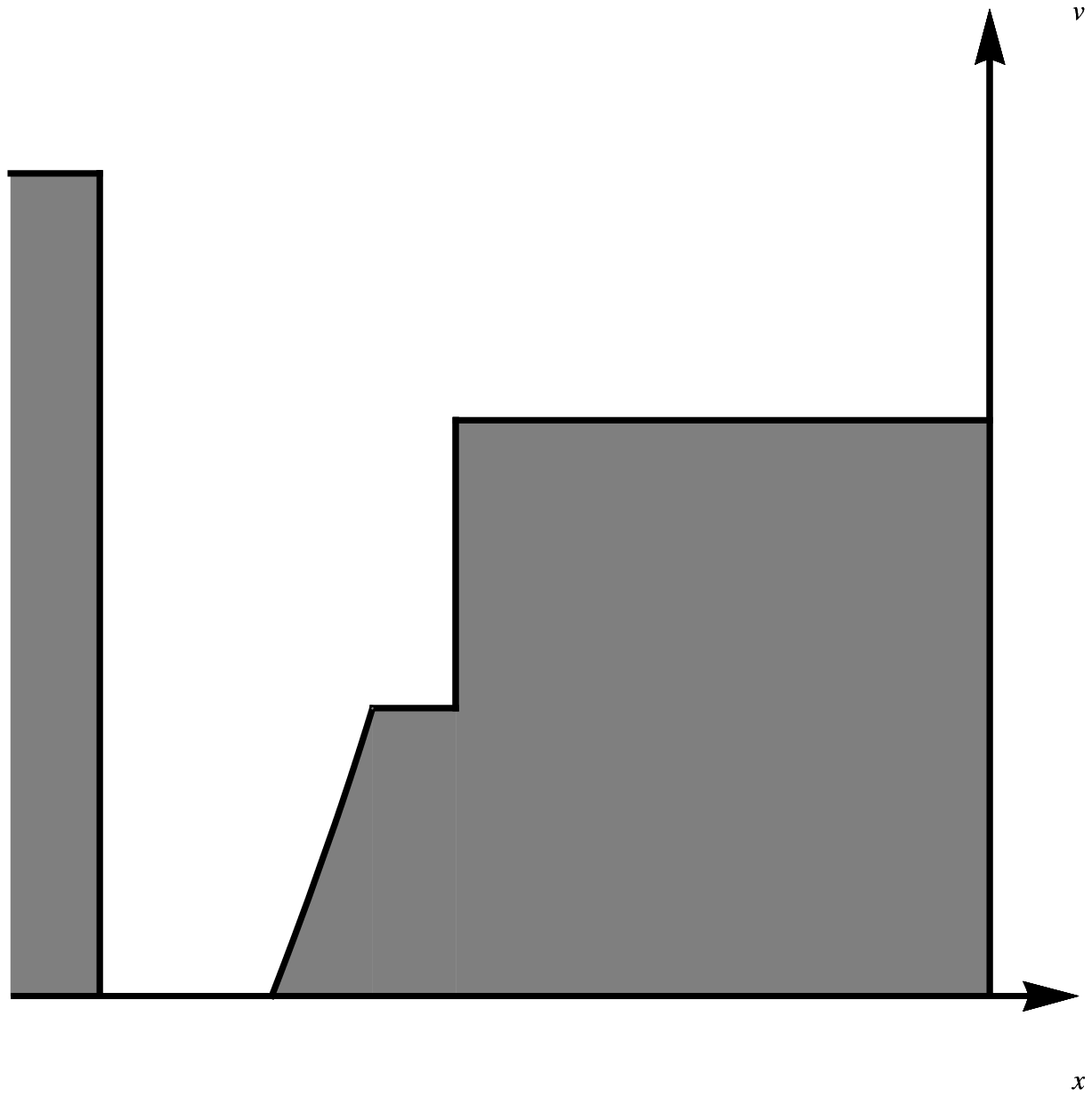}
\end{psfrags}}\vspace{2pt}
  \end{tabular}
\caption{Profiles of the solution $(t,x)\mapsto u(u,x) = \left(\rho(t,x),u(t,x)\right)$ contructed in Section~\ref{sec:ex} and corresponding to the numerical data~\eqref{eq:numericaldata}}
\label{fig:simuprophiles}
\end{figure}

Assume that the shocks $\mathcal{S}_1$ and $\mathcal{S}_2$ do not meet in $\R_-$.
Then, at times
\begin{align*}
&t_{d_2} \doteq -\frac{x_{c_2}}{\sigma\left(R_{\rm f}',v_{\rm f}(R_{\rm f}'),R_{\rm f}'',V_{\rm f}\right)}&
&\text{and}&
&t_{d_1} \doteq -\frac{x_{c_1}}{v_{\rm f}(R_{\rm f}')}
\end{align*}
the traffic light placed in $x = 0$ is reached respectively by $\mathcal{S}_1$ and $\mathcal{S}_2$.
Clearly, $t_{d_1}$ gives the time at which the last vehicle passes through $x=0$.

\begin{rem}
Once the overall picture of the solution is known, it is possible to express in a closed form the time at which the last vehicle passes through $x=0$.
Indeed, we first observe that the first long vehicle passes through $x=0$ at time
\[
t_\star \doteq \frac{R_{\max} \, |x_2|}{R_{\rm f}'' \, V_{\rm f}} ,
\]
with $t_\star < t_{d_2}$.
Then we observe that
\[
\left(x_2-x_1\right) R_{\rm c} = \left(t_{d_1} - t_{d_2}\right) R_{\rm f}' \, v_{\rm f}(R_{\rm f}') + \left(t_{d_2} - t_\star\right) R_{\rm f}'' \, V_{\rm f} .
\]
Moreover we can compute $x_{c_2}$ by solving the following system
\begin{align*}
&c_2\colon&
&x_{c_2} = t_{c_2} \, \sigma\left(p^{-1}(W_{\max}-V_{\rm c}),V_{\rm c},R_{\rm f}'',V_{\rm f}\right),&
&|x_2| \, R_{\max} = \left[|x_{c_2}| + t_{c_2} \, V_{\rm f} \right] R_{\rm f}'' ,
\end{align*}
namely
\begin{align*}
&t_{c_2} = \frac{R_{\max} \, |x_2|}{R_{\rm f}''\left[V_{\rm f} - \sigma\left(p^{-1}(W_{\max}-V_{\rm c}),V_{\rm c},R_{\rm f}'',V_{\rm f}\right)\right]} ,&
&x_{c_2} = \frac{R_{\max} \, \sigma\left(p^{-1}(W_{\max}-V_{\rm c}),V_{\rm c},R_{\rm f}'',V_{\rm f}\right) \, |x_2|}{R_{\rm f}''\left[V_{\rm f} - \sigma\left(p^{-1}(W_{\max}-V_{\rm c}),V_{\rm c},R_{\rm f}'',V_{\rm f}\right)\right]} .
\end{align*}
As a consequence we have that
\[
t_{d_2} 
= 
t_{c_2}-\frac{x_{c_2}}{\sigma\left(R_{\rm f}',v_{\rm f}(R_{\rm f}'),R_{\rm f}'',V_{\rm f}\right)} 
= 
\left[ 1 -
\frac{\sigma\left(p^{-1}(W_{\max}-V_{\rm c}),V_{\rm c},R_{\rm f}'',V_{\rm f}\right)}{\sigma\left(R_{\rm f}',v_{\rm f}(R_{\rm f}'),R_{\rm f}'',V_{\rm f}\right)}\right]
\frac{R_{\max} \, |x_2|}{R_{\rm f}''\left[V_{\rm f} - \sigma\left(p^{-1}(W_{\max}-V_{\rm c}),V_{\rm c},R_{\rm f}'',V_{\rm f}\right)\right]}  ,
\]
and therefore
\begin{align*}
t_{d_1} =&~
\frac{\left(x_2-x_1\right) R_{\rm c} - x_2 \, R_{\max}}{R_{\rm f}' \, v_{\rm f}(R_{\rm f}')}
\\
&~+
\left[ 1 -
\frac{\sigma\left(p^{-1}(W_{\max}-V_{\rm c}),V_{\rm c},R_{\rm f}'',V_{\rm f}\right)}{\sigma\left(R_{\rm f}',v_{\rm f}(R_{\rm f}'),R_{\rm f}'',V_{\rm f}\right)}\right]
\left[\vphantom{\frac{\sigma\left(p^{-1}(W_{\max}-V_{\rm c}),V_{\rm c},R_{\rm f}'',V_{\rm f}\right)}{\sigma\left(R_{\rm f}',v_{\rm f}(R_{\rm f}'),R_{\rm f}'',V_{\rm f}\right)}}
1 - \frac{R_{\rm f}'' \, V_{\rm f}}{R_{\rm f}' \, v_{\rm f}(R_{\rm f}')}\right]
\frac{R_{\max} \, |x_2|}{R_{\rm f}''\left[V_{\rm f} - \sigma\left(p^{-1}(W_{\max}-V_{\rm c}),V_{\rm c},R_{\rm f}'',V_{\rm f}\right)\right]}
.
\end{align*}
\end{rem}

The resulting solution represented in \figurename~\ref{fig:02}, \figurename~\ref{fig:simuxtrExtv} and \figurename~\ref{fig:simuprophiles} correspond to the following numerical values:
\begin{align}\label{eq:numericaldata}
&\gamma = 2 ,&
&V_{\max} = \frac{1}{20} ,&
&W_{\max} = \frac{1}{8} ,&
&W_{\rm c} = \frac{4}{30} ,&
&x_1 = -10 ,&
&x_2 = -7 .
\end{align}

We conclude the section by underlying that to construct the solution in $\R_+$ it is sufficient to apply the theory for LWR.

\section{Wave-front tracking algorithm}\label{sec:wft}

In this section we apply the wave-front tracking method to construct weak solutions to the Cauchy problem~\eqref{eq:model}, \eqref{eq:initial} that are only approximate entropic.

\subsection{Exact and approximate Riemann solvers}\label{sec:exactpprR}

We construct piecewise constant approximate solutions to the Cauchy problem~\eqref{eq:model}, \eqref{eq:initial} with initial datum $\bar{u}$ in $\mathcal{D} \doteq \{u \in \L1(\R;\Omega) \colon \tv(u) \le M\}$ by juxtaposing solutions of Riemann problems for~\eqref{eq:model}, i.e.~of Cauchy problems for~\eqref{eq:model} with the Heaviside initial data
\begin{equation}
\label{eq:Riemanndatum}
u(0,x) = \begin{cases}
u_\ell&\text{if }x<0,\\
u_r&\text{if }x>0.
\end{cases}
\end{equation}
We denote below by $\mathcal{R}_{\rm LWR}$ and $\mathcal{R}_{\rm ARZ}$ the Riemann solvers for respectively LWR and ARZ.
Then, the maps
\begin{align*}
&(t,x) \mapsto \mathcal{R}_{\rm LWR}[u_\ell,u_r](x/t)&
&\text{and}&
&(t,x) \mapsto \mathcal{R}_{\rm ARZ}[u_\ell,u_r](x/t)
\end{align*}
are the self similar Lax solutions of the Riemann problems for respectively LWR and ARZ with initial datum~\eqref{eq:Riemanndatum}, with $(u_\ell,u_r)$ being respectively in $\Omega_{\rm f} \times \Omega_{\rm f}$ and $\Omega_{\rm c} \times \Omega_{\rm c}$.
Moreover, for any $u_\ell,u_r \in \Omega$ with $\rho_\ell\neq\rho_r$, we denote by $\sigma(u_\ell,u_r)$ the speed of propagation of a discontinuity from $u_\ell$ to $u_r$ and given by~\eqref{eq:sigma}.
In the following definition we introduce a Riemann solver obtained by generalizing that one given in~\cite{goatin2006aw} to the setting presented in Section~\ref{sec:notations}.

\begin{figure}[ht]
\centering{
\begin{psfrags}
      \psfrag{a}[c,B]{$\rho$}
      \psfrag{b}[c,B]{}
      \psfrag{g}[c,B]{$\rho\,v$}
      \psfrag{r}[c,B]{$\rho_r$}
      \psfrag{s}[c,B]{$\rho_m$}
      \psfrag{t}[c,B]{$\rho_\ell$}
\includegraphics[width=.3\textwidth]{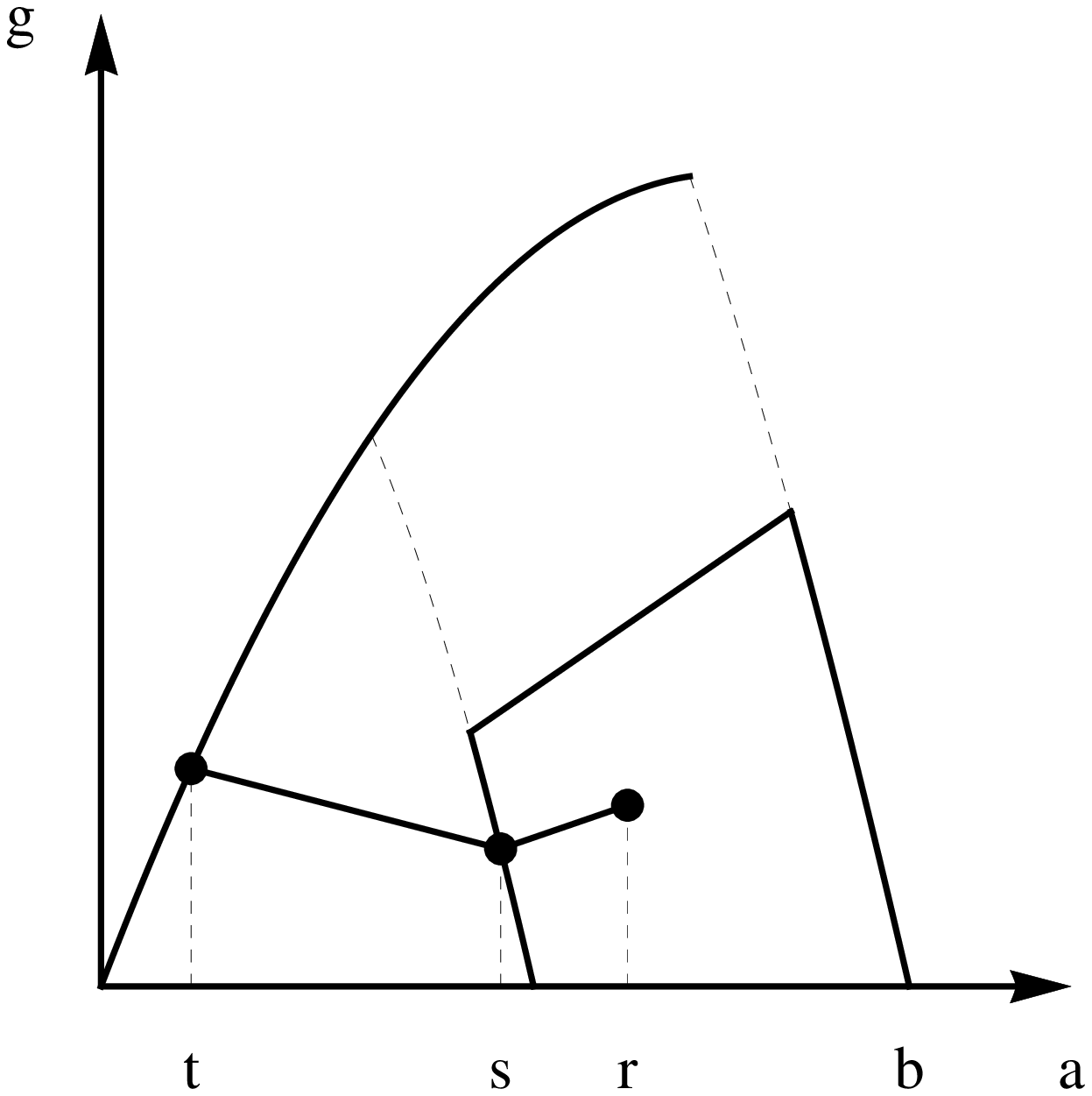}\quad
\includegraphics[width=.3\textwidth]{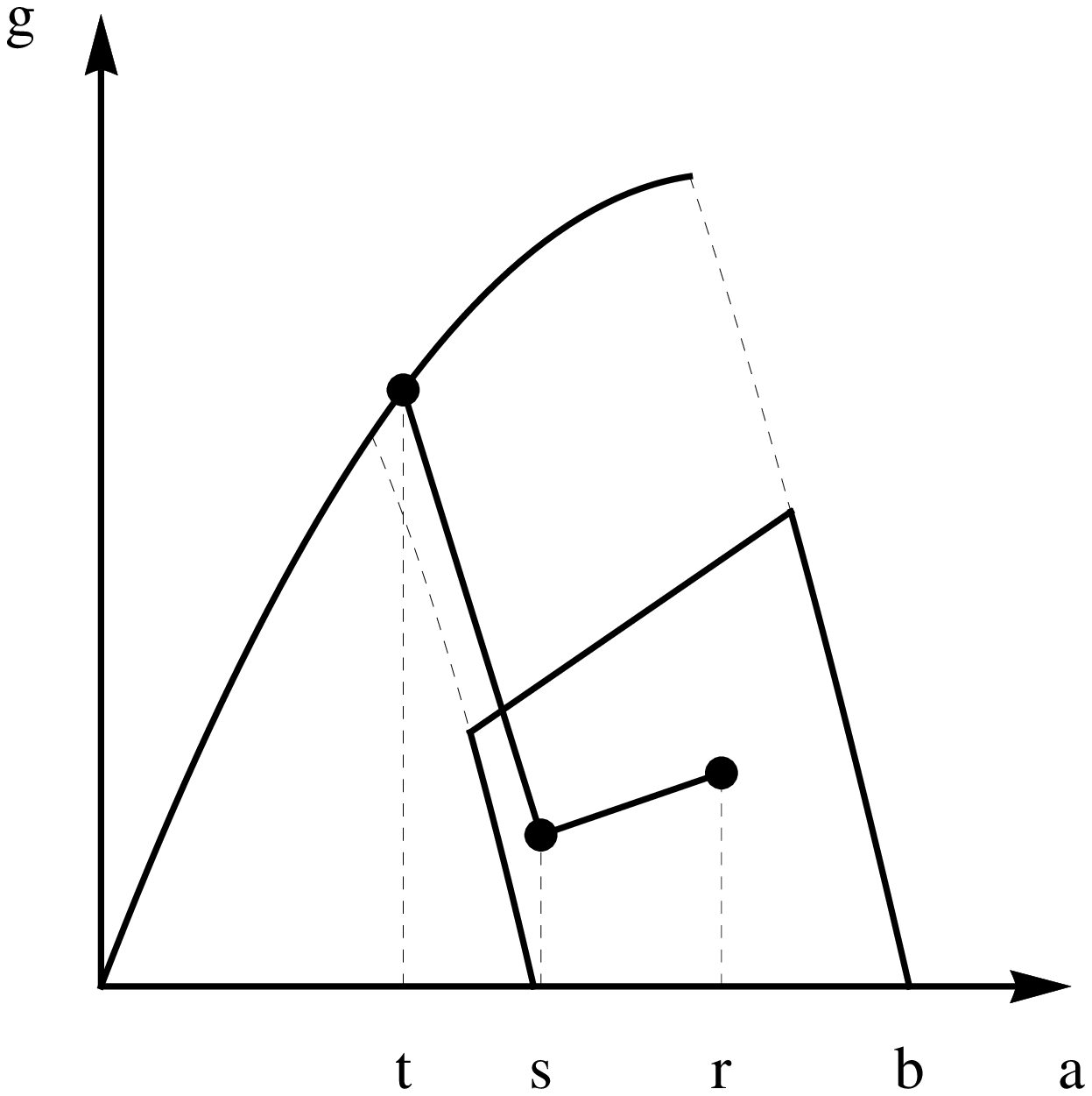}\quad
      \psfrag{s}[c,B]{$\rho_m''$}
      \psfrag{z}[c,B]{$~~\,\rho_m'$}
      \psfrag{t}[l,B]{$\rho_\ell$}
      \psfrag{r}[l,B]{$~\rho_r$}
\includegraphics[width=.3\textwidth]{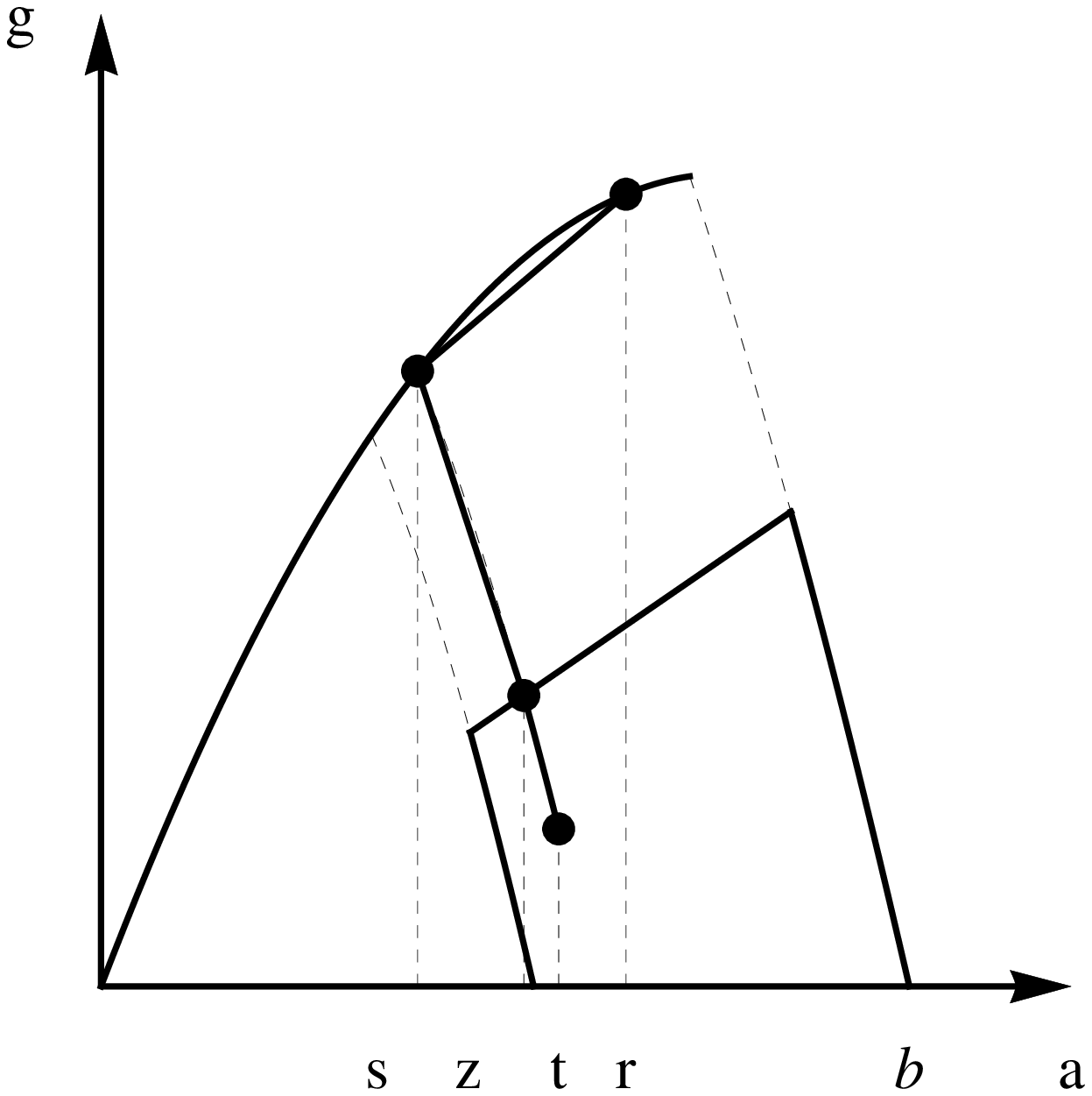}\\[5pt]
      \psfrag{v}[c,B]{$v$}
      \psfrag{w}[c,b]{$w$}
      \psfrag{0}[c,B]{$w_r$}
      \psfrag{1}[c,B]{$W_{\rm c}$}
      \psfrag{2}[c,B]{$w_\ell$}
      \psfrag{3}[l,B]{$v_\ell$}
      \psfrag{4}[c,c]{$v_r$}
      \psfrag{6}[c,B]{$~~V_{\rm f}$}
\includegraphics[width=.3\textwidth]{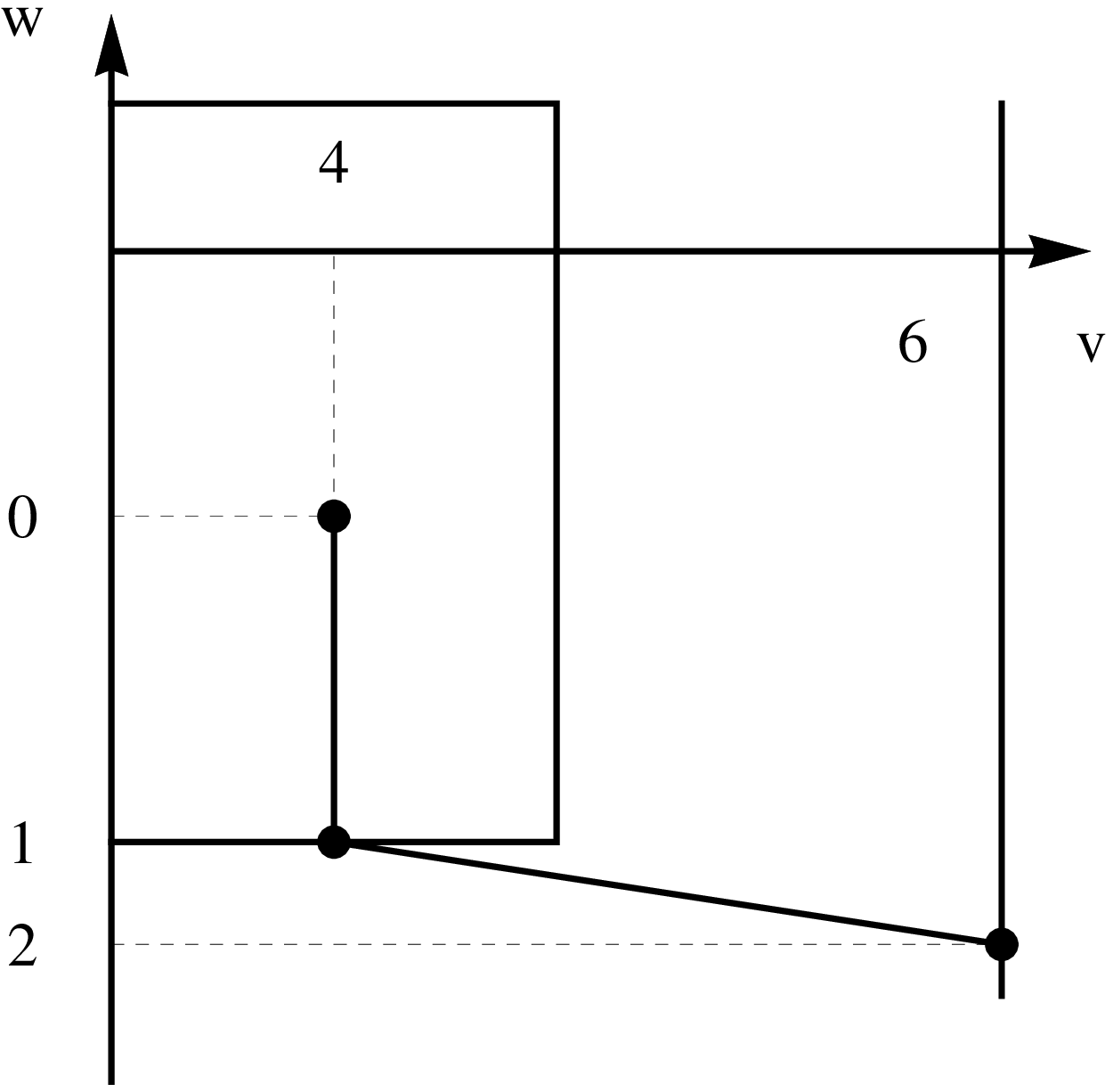}\quad
\includegraphics[width=.3\textwidth]{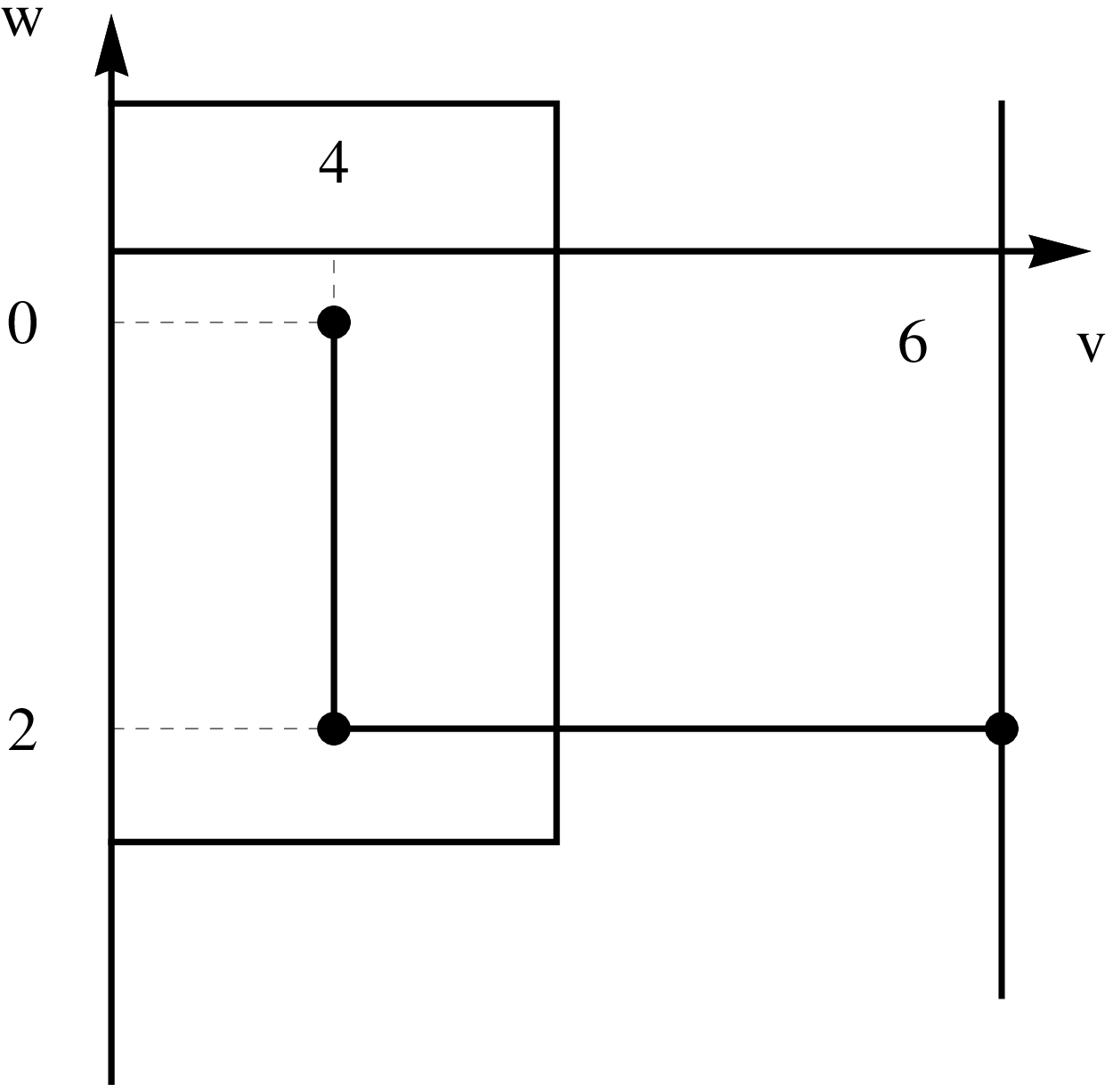}\quad
      \psfrag{1}[c,B]{$V_{\rm c}$}
\includegraphics[width=.3\textwidth]{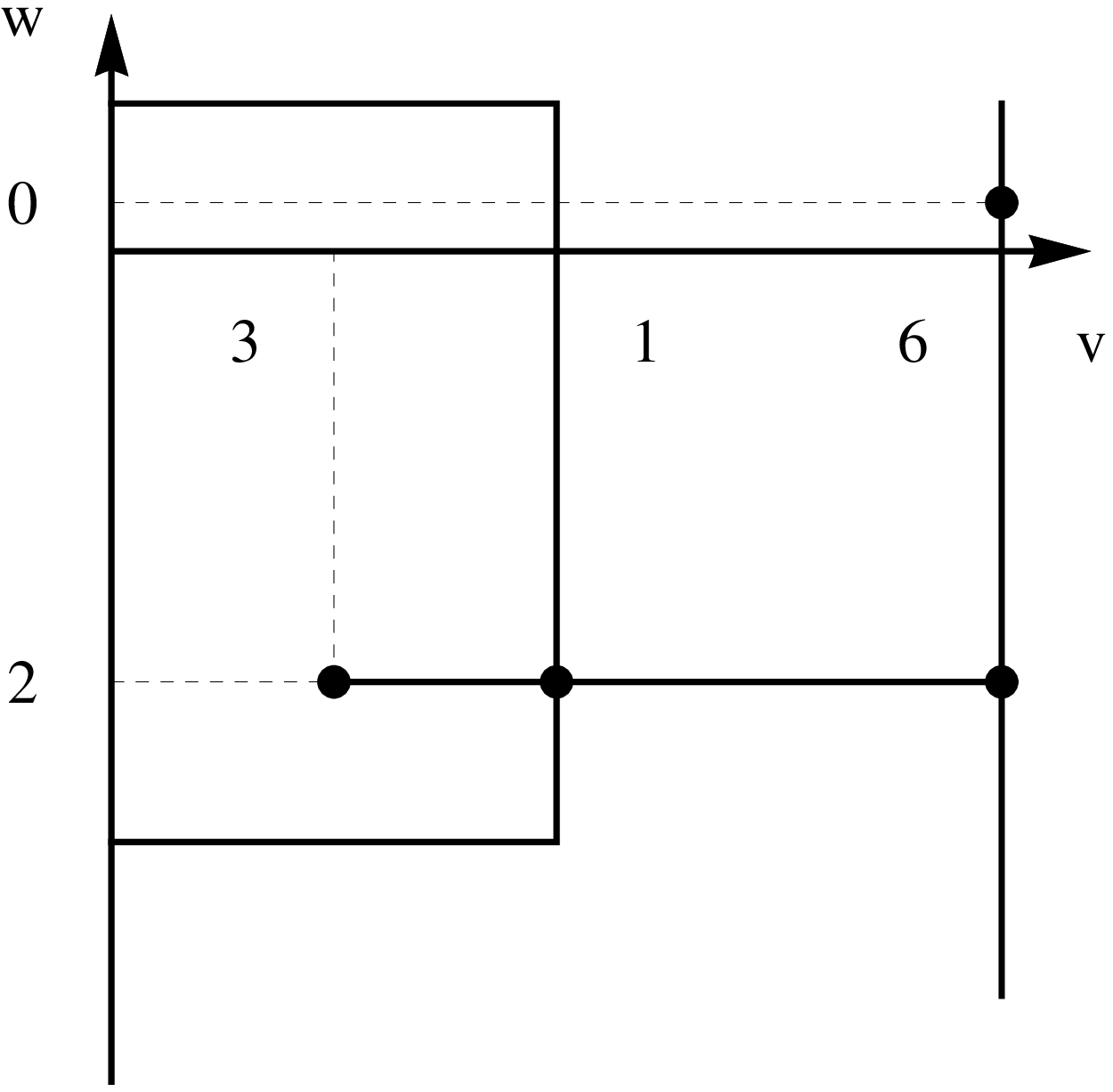}
\end{psfrags}}
\caption{Solutions of the Riemann problem~\eqref{eq:model}, \eqref{eq:Riemanndatum} given by the Riemann solver $\mathcal{R}$ introduced in Definition~\ref{def:LWR-ARZ}.}
\end{figure}

\begin{definition}\label{def:LWR-ARZ}
The Riemann solver $\mathcal{R} \colon \Omega \times \Omega \to \Lloc1(\R;\Omega)$ associated to~\eqref{eq:model} is defined as follows:

\begin{enumerate}[label={(R\arabic*)},leftmargin=*]

\item
If $u_\ell$ and $u_r$ belong to the same phase domain, then $\mathcal{R}$ coincides with the Lax Riemann solver, namely
\[
\mathcal{R}[u_\ell,u_r] \doteq 
\begin{cases}
\mathcal{R}_{\rm LWR}[u_\ell,u_r]&\text{if }u_\ell, u_r \in \Omega_{\rm f},\\
\mathcal{R}_{\rm ARZ}[u_\ell,u_r]&\text{if }u_\ell, u_r \in \Omega_{\rm c}.
\end{cases}
\]

\item\label{LWR-ARZ2} If $u_\ell \in \Omega_{\rm f}$ and $u_r \in \Omega_{\rm c}$, then
\[
\mathcal{R}[u_\ell,u_r](x) \doteq 
\begin{cases}
u_\ell&\text{if } x<\sigma(u_\ell,u_m),\\
\mathcal{R}_{\rm ARZ}[u_m,u_r](x)&\text{if } x>\sigma(u_\ell,u_m),
\end{cases}
\]
where $\rho_m \doteq p^{-1}\left(\max\left\{W_{\rm c}, w_2(u_\ell)\right\} - v_r\right)$ and $v_m \doteq v_r$.

\item\label{LWR-ARZ3} If $u_\ell \in \Omega_{\rm c}$ and $u_r \in \Omega_{\rm f}$, then
\[
\mathcal{R}[u_\ell,u_r](x) \doteq 
\begin{cases}
\mathcal{R}_{\rm ARZ}[u_\ell,u_m'](x)&\text{if } x<\sigma(u_m',u_m''),\\
\mathcal{R}_{\rm LWR}[u_m'',u_r](x)&\text{if } x>\sigma(u_m',u_m''),
\end{cases}
\]
where $\rho_m' \doteq p^{-1}(w_2(u_\ell) - V_{\rm c})$, $v_m' \doteq V_{\rm c}$, $\rho_m'' \doteq \rho_{\rm f}(w_2(u_\ell))$ and $v_m'' \doteq v_{\rm f}(\rho_m'')$.

\end{enumerate}
\end{definition}
\noindent
According to the above definition, in the case~\ref{LWR-ARZ2} we have that $\mathcal{R}[u_\ell,u_r]$ performs a phase transition from $u_\ell$ to $u_m$, followed by a possible null contact discontinuity from $u_m$ to $u_r$.
In particular, if $\rho_\ell = 0$, then $\mathcal{R}[u_\ell,u_r]$ performs a single phase transition:
\begin{align*}
&u_r \in \Omega_{\rm c}&
&\Rightarrow&
&\mathcal{R}[0,V_{\max},u_r](x) \doteq 
\begin{cases}
(0,V_{\max})&\text{if } x<v_r,\\
u_r&\text{if } x>v_r.
\end{cases}
\end{align*}
Finally, in the case~\ref{LWR-ARZ3} we have that $\mathcal{R}[u_\ell,u_r]$ performs a possible null rarefaction from $u_\ell$ to $u_m'$, a phase transition from $u_m'$ to $u_m''$, followed by a possible null Lax wave from $u_m''$ to $u_r$.

In the next proposition we collect the main properties of $\mathcal{R}$.
In particular we prove that $\mathcal{R}$ is \emph{consistent}, namely that it satisfies the following two conditions for any $u_\ell,u_m,u_r$ in $\Omega$ and $\bar x$ in $\R$:
\begin{align*}
&{\rm (I)}&
  &\mathcal{R}[u_\ell,u_r](\bar x) = u_m
  & \Rightarrow &
  &&\begin{cases}
      \mathcal{R}[u_\ell,u_m](x)= 
      \begin{cases}
          \mathcal{R}[u_\ell,u_r](x)& \hbox{ if } x\leq \bar x ,
          \\
          u_m & \hbox{ if } x > \bar x ,
      \end{cases}
      \\[10pt]
      \mathcal{R}[u_m,u_r](x) =
      \begin{cases}
          u_m & \hbox{ if } x < \bar x ,
          \\
          \mathcal{R}[u_\ell,u_r](x)& \hbox{ if } x \geq \bar x.
      \end{cases}
      \end{cases}
  \\[0pt]
&{\rm (II)}&
  &\begin{rcases}
  \mathcal{R}[u_\ell,u_m](\bar x)=u_m 
  \\
  \mathcal{R}[u_m,u_r](\bar x)=u_m
  \end{rcases}
  & \Rightarrow &
  &&\mathcal{R}[u_\ell,u_r](x)=
      \begin{cases}
      \mathcal{R}[u_\ell,u_m](x)& \hbox{ if } x < \bar x ,
      \\
      \mathcal{R}[u_m,u_r](x) & \hbox{ if } x \geq \bar x .
      \end{cases}
\end{align*}
We recall that this property is a necessary condition for the well posedness of the Cauchy problem in $\L1$.

\begin{proposition}
The Riemann solver $\mathcal{R} \colon \Omega \times \Omega \to \L\infty(\R;\Omega)$ satisfies the following conditions:
\begin{enumerate}[label={(\arabic*)},leftmargin=*]
\item $\mathcal{R}$ is consistent.
\item $\mathcal{R} \colon \Omega\times\Omega \to \Lloc1(\R;\Omega)$ is continuous.
\item For any $u_\ell, u_r \in \Omega$, we have that $(t,x) \mapsto \mathcal{R}[u_\ell,u_r](x/t)$ is an entropy solution of~\eqref{eq:model}, \eqref{eq:Riemanndatum} in the sense of Definition~\ref{def:entropyPT}.
\end{enumerate}
\end{proposition}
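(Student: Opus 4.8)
The plan is to deduce all three claims, by a case analysis on the phase domains of $u_\ell$ and $u_r$, from the corresponding classical properties of the Lax solvers $\mathcal{R}_{\rm LWR}$ and $\mathcal{R}_{\rm ARZ}$: their consistency, their $\Lloc1$-continuity, and the fact that they produce entropy solutions of LWR and ARZ in the sense of Definitions~\ref{def:solLWR} and~\ref{def:solARZ} (the latter from~\cite{BCM2order,BCJMU-order2}). First I would record some structural facts: $w_2$ is continuous on $\Omega$, because the two branches in~\eqref{eq:change} agree (both equal $W_{\rm c}$) at $\rho=R_{\rm f}'$; the sets $\Omega_{\rm f}$, $\Omega_{\rm c}$ are invariant for LWR, ARZ; the intermediate states $u_m$ in~\ref{LWR-ARZ2} and $u_m',u_m''$ in~\ref{LWR-ARZ3} are well defined and lie in the correct domains, using the Lemma on $\rho_{\rm f}$, assumptions~\eqref{H2},~\eqref{H3} and $V_{\rm c}<V_{\rm f}$; and, crucially, $w_2(u_m)=\max\{W_{\rm c},w_2(u_\ell)\}$ in~\ref{LWR-ARZ2}, whereas $w_2(u_m')=w_2(u_m'')=w_2(u_\ell)$ and $v_{m'}=V_{\rm c}$ in~\ref{LWR-ARZ3}. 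I would then verify that the wave speeds in each pattern are correctly ordered, so that $(t,x)\mapsto\mathcal{R}[u_\ell,u_r](x/t)$ is a genuine self-similar piecewise-Lipschitz $\BV$ function with finitely many fronts (hence in $\L\infty(]0,+\infty[;\BV(\R;\Omega))\cap\C0(\R_+;\Lloc1(\R;\Omega))$, with the Riemann datum attained a.e.): in~\ref{LWR-ARZ2} one has $\rho_m>\rho_\ell$ and $v_m=v_r$, so the trailing ARZ part is a single $2$-contact of speed $v_r$ and $\sigma(u_\ell,u_m)\le v_r$ (strict unless $\rho_\ell=0$), since $v_{\rm f}(\rho_\ell)\ge V_{\rm f}>V_{\rm c}\ge v_r$; in~\ref{LWR-ARZ3} the part $\mathcal{R}_{\rm ARZ}[u_\ell,u_m']$ is a single $1$-rarefaction (its endpoints share the same $w_2$), whose speeds do not exceed $\sigma(u_m',u_m'')$, which in turn does not exceed the slowest speed of $\mathcal{R}_{\rm LWR}[u_m'',u_r]$. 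I expect these last two speed inequalities, which rely on~\eqref{H1},~\eqref{H2},~\eqref{H3} and the capacity-drop inequality $v_{\rm f}(\rho)<\rho\,p'(\rho)$ on $[R_{\rm f}',R_{\rm f}'']$, to be the main technical obstacle.

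For item~(1) I would split according to the phases of $u_\ell$, $u_r$ and of the middle state appearing in {\rm(I)}, {\rm(II)}. If $u_\ell,u_r$ lie in the same phase, $\mathcal{R}$ is $\mathcal{R}_{\rm LWR}$ or $\mathcal{R}_{\rm ARZ}$ and the claim is their consistency. In the mixed cases the only constant states in $\mathcal{R}[u_\ell,u_r]$ besides $u_\ell,u_r$ are the intermediate states of~\ref{LWR-ARZ2},~\ref{LWR-ARZ3} (or points inside the embedded LWR/ARZ sub-fans, again covered by consistency of the Lax solvers), so the middle state is one of finitely many explicit states. The verification then rests on the \emph{idempotency} noted above: applying~\ref{LWR-ARZ2} to $(u_\ell,u_m)$ reproduces the same $u_m$ because $w_2(u_m)=\max\{W_{\rm c},w_2(u_\ell)\}$ and $v_m=v_r$; applying~\ref{LWR-ARZ3} to $(u_m',u_r)$ or to $(u_\ell,u_m'')$ reproduces $u_m'$ and $u_m''$ since those depend on $u_\ell$ alone and $w_2(u_m')=w_2(u_\ell)$. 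Many sub-cases of {\rm(II)} turn out to be vacuous, as the hypothesis forces the two sub-fans to have separated speeds, which is incompatible with a $1$-wave issuing from the middle state.

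For item~(2) I would exploit that $\Omega=\Omega_{\rm f}\sqcup\Omega_{\rm c}$ is disconnected, so $\Omega\times\Omega$ has four connected components on each of which $\mathcal{R}$ is given by one formula: on $\Omega_{\rm f}\times\Omega_{\rm f}$ and $\Omega_{\rm c}\times\Omega_{\rm c}$ continuity is that of $\mathcal{R}_{\rm LWR}$, $\mathcal{R}_{\rm ARZ}$; on $\Omega_{\rm f}\times\Omega_{\rm c}$ and $\Omega_{\rm c}\times\Omega_{\rm f}$ the intermediate states depend continuously on $(u_\ell,u_r)$ (continuity of $w_2$, of $\max\{W_{\rm c},\cdot\}$, $p^{-1}$ and $\rho_{\rm f}$), the connecting speeds are continuous since $\rho_\ell\neq\rho_m$ and $\rho_m'\neq\rho_m''$, and juxtaposing a constant state (resp.\ an ARZ fan) with an ARZ fan (resp.\ an LWR fan) along a continuously varying line is $\Lloc1$-continuous by the standard estimate; the only vacuum state $(0,V_{\max})\in\Omega_{\rm f}'$ causes no difficulty.

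For item~(3) I would check the conditions of Definition~\ref{def:entropyPT}. Conditions~\ref{weakPT1},~\ref{weakPT2},~\ref{entropyPT1},~\ref{entropyPT2} hold because wherever $\mathcal{R}[u_\ell,u_r]$ takes values in $\Omega_{\rm f}$ (resp.\ $\Omega_{\rm c}$) it coincides with the restriction of a solution produced by $\mathcal{R}_{\rm LWR}$ (resp.\ $\mathcal{R}_{\rm ARZ}$), possibly a constant, which is an entropy solution of LWR (resp.\ ARZ), so~\eqref{eq:weakLWR}--\eqref{eq:entropyLWR} (resp.\ \eqref{eq:weakARZ}--\eqref{eq:entropyARZ}) hold for test functions supported there. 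For~\ref{weakPT3} and~\ref{entropyPT3}: $\mathcal{R}[u_\ell,u_r]$ contains at most one phase transition, propagating along a straight line with speed equal to $\sigma$ of its traces (the Rankine--Hugoniot relation for the conserved density is built into the construction), and a direct computation shows that in~\ref{LWR-ARZ2} the pair $(u_\ell,u_m)$ lies in $\mathcal{G}_1$ when $u_\ell\in\Omega_{\rm f}'$ (there $w_2(u_\ell)\le W_{\rm c}$, hence $w_2(u_m)=W_{\rm c}$) and in $\mathcal{G}_2$ when $u_\ell\in\Omega_{\rm f}''$ (there $w_2(u_m)=w_2(u_\ell)$), while in~\ref{LWR-ARZ3} the pair $(u_m',u_m'')$ lies in $\mathcal{G}_3$ (since $w_2(u_m')=w_2(u_m'')=w_2(u_\ell)$, $v_{m'}=V_{\rm c}$ and, by the Lemma, $u_m''\in\Omega_{\rm f}''$). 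As $\mathcal{G}_3\subseteq\mathcal{G}_2^T$, in every case the pair of traces belongs to $\mathcal{G}_{\rm e}\subseteq\mathcal{G}_{\rm w}$, which yields both~\ref{weakPT3} and~\ref{entropyPT3} and concludes the proof.
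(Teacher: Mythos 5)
Your proposal is correct and follows essentially the same route as the paper: reduce everything to the consistency, continuity and entropy properties of $\mathcal{R}_{\rm LWR}$ and $\mathcal{R}_{\rm ARZ}$, and settle the mixed cases by a case analysis on the intermediate states of~\ref{LWR-ARZ2}--\ref{LWR-ARZ3}, the continuity of $\sigma$, and the membership of the phase-transition traces in $\mathcal{G}_1$, $\mathcal{G}_2$, $\mathcal{G}_3$ (your explicit verification of the latter is in fact more detailed than the paper's two-line argument for item~(3)). The one point where the paper is more explicit is continuity at the vacuum: for $\rho_\ell=0$ the perturbed solution acquires an extra constant state $u_m^\varepsilon$ whose two bounding speeds both converge to the single speed $v_r=\sigma(u_\ell,u_r)$, a degeneration your ``standard juxtaposition estimate'' does cover but which merits the explicit check you wave away.
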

\begin{proof}
Since both $\mathcal{R}_{\rm LWR}$ and $\mathcal{R}_{\rm ARZ}$ satisfy the above conditions respectively in $\Omega_{\rm f} \times \Omega_{\rm f}$ and $\Omega_{\rm c} \times \Omega_{\rm c}$, we have to consider only the cases when a phase transition occurs.
\begin{enumerate}[label={(\arabic*)},leftmargin=*]
\item 
We first prove the property~(I).
Assume that $\mathcal{R}[u_\ell,u_r](\bar x) = u_m$ with $u_m$ distinct from $u_\ell$ and $u_r$.
If $u_\ell$ and $u_r$ are respectively in $\Omega_{\rm f}$ and $\Omega_{\rm c}$, then the only possible choice for $u_m$ is $u_m = (p^{-1}\left(\max\left\{W_{\rm c},w_2(u_\ell)\right\} - v_r\right), v_r) \in \Omega_{\rm c}$ and the result immediately follows.
If $u_\ell$ and $u_r$ are respectively in $\Omega_{\rm c}$ and $\Omega_{\rm f}$, then either $v_\ell < V_{\rm c}$, $u_m \in \Omega_{\rm c}$ and $w_2(u_m) = w_2(u_\ell)$ or $u_m \in \Omega_{\rm f}$ with $v_m$ between $v_{\rm f}(\rho_{\rm f}(w_2(u_\ell)))$ and $v_r$.
To conclude the proof it is sufficient in the first case to observe that $\rho_{\rm f}(w_2(u_m)) = \rho_{\rm f}(w_2(u_\ell))$, while in the latter case it is sufficient to exploit the consistency of $\mathcal{R}_{\rm LWR}$ .
\\
To prove the property~(II) it is sufficient to consider the following two cases.
First, assume that $\mathcal{R}[u_\ell,u_m]$ is given by just one phase transition and $\mathcal{R}[u_\ell,u_m](\bar x) = u_m$.
If $u_\ell \in \Omega_{\rm f}$, then $\mathcal{R}[u_m,u_r](\bar x) = u_m$ implies that $v_m = v_r$.
If $u_\ell \in \Omega_{\rm c}$, then $v_\ell = V_{\rm c}$ and $\rho_m = \rho_{\rm f}(w_2(u_\ell))$. 
Clearly therefore in both cases (II) holds true.
The second case to be considered is when $\mathcal{R}[u_m,u_r]$ is given by a phase transition and $\mathcal{R}[u_m,u_r](\bar x) = u_m$.
In this case the only possibility is to have $u_r \in \Omega_{\rm f}''$, $u_\ell \in \Omega_{\rm c}$, $v_m = V_{\rm c}$ and $w_2(u_r) = w_2(u_m) = w_2(u_\ell)$, and therefore the result is obvious.

\item
Assume that $\mathcal{R}[u_\ell,u_r]$ is given by just one phase transition.
Consider $u_\ell^\varepsilon$ and $u_r^\varepsilon$ such that $\norma{u_\ell^\varepsilon - u_\ell} \le \varepsilon$ and $\norma{u_r^\varepsilon - u_r} \le \varepsilon$.
If $\rho_\ell \ne 0$, then it is easy to prove that $\mathcal{R}[u_\ell^\varepsilon,u_r^\varepsilon]$ converges to $\mathcal{R}[u_\ell,u_r]$ in $\Lloc1$ by observing that also $\mathcal{R}[u_\ell^\varepsilon,u_r^\varepsilon]$ is given by a single phase transition and by exploiting the continuity of $\sigma$.
If $\rho_\ell = 0$, then
\[
\mathcal{R}[u_\ell^\varepsilon,u_r^\varepsilon](x) = 
\begin{cases}
u_\ell^\varepsilon&\text{if }x<\sigma(u_\ell^\varepsilon,u_m^\varepsilon),
\\
u_m^\varepsilon&\text{if }\sigma(u_\ell^\varepsilon,u_m^\varepsilon) < x<\sigma(u_m^\varepsilon,u_r^\varepsilon),
\\
u_r^\varepsilon&\text{if }x>\sigma(u_m^\varepsilon,u_r^\varepsilon),
\end{cases}
\]
where $\rho_m^\varepsilon \doteq p^{-1}(W_{\rm c}-v_r^\varepsilon)$ and $v_m^\varepsilon \doteq v_r^\varepsilon$.
Now, since both $\sigma(u_\ell^\varepsilon,u_m^\varepsilon)$ and $\sigma(u_m^\varepsilon,u_r^\varepsilon)$ converge to $\sigma(u_\ell,u_r)$ we deduce the $\Lloc1$-convergence of $\mathcal{R}[u_\ell^\varepsilon,u_r^\varepsilon]$ to $\mathcal{R}[u_\ell,u_r]$.

\item
The last property follows immediately by the Definition~\ref{def:LWR-ARZ} of $\mathcal{R}$.
Indeed, in both cases described in~\ref{LWR-ARZ2} and~\ref{LWR-ARZ3} of Definition~\ref{def:LWR-ARZ} we have that on each side of a phase transition $\mathcal{R}[u_\ell,u_r]$ can perform only Lax waves.\qedhere
\end{enumerate}
\end{proof}

\begin{figure}[ht]
\centering{
\begin{psfrags}
      \psfrag{a}[c,B]{$\rho$}
      \psfrag{g}[c,B]{$\rho\,v$}
\includegraphics[width=.3\textwidth]{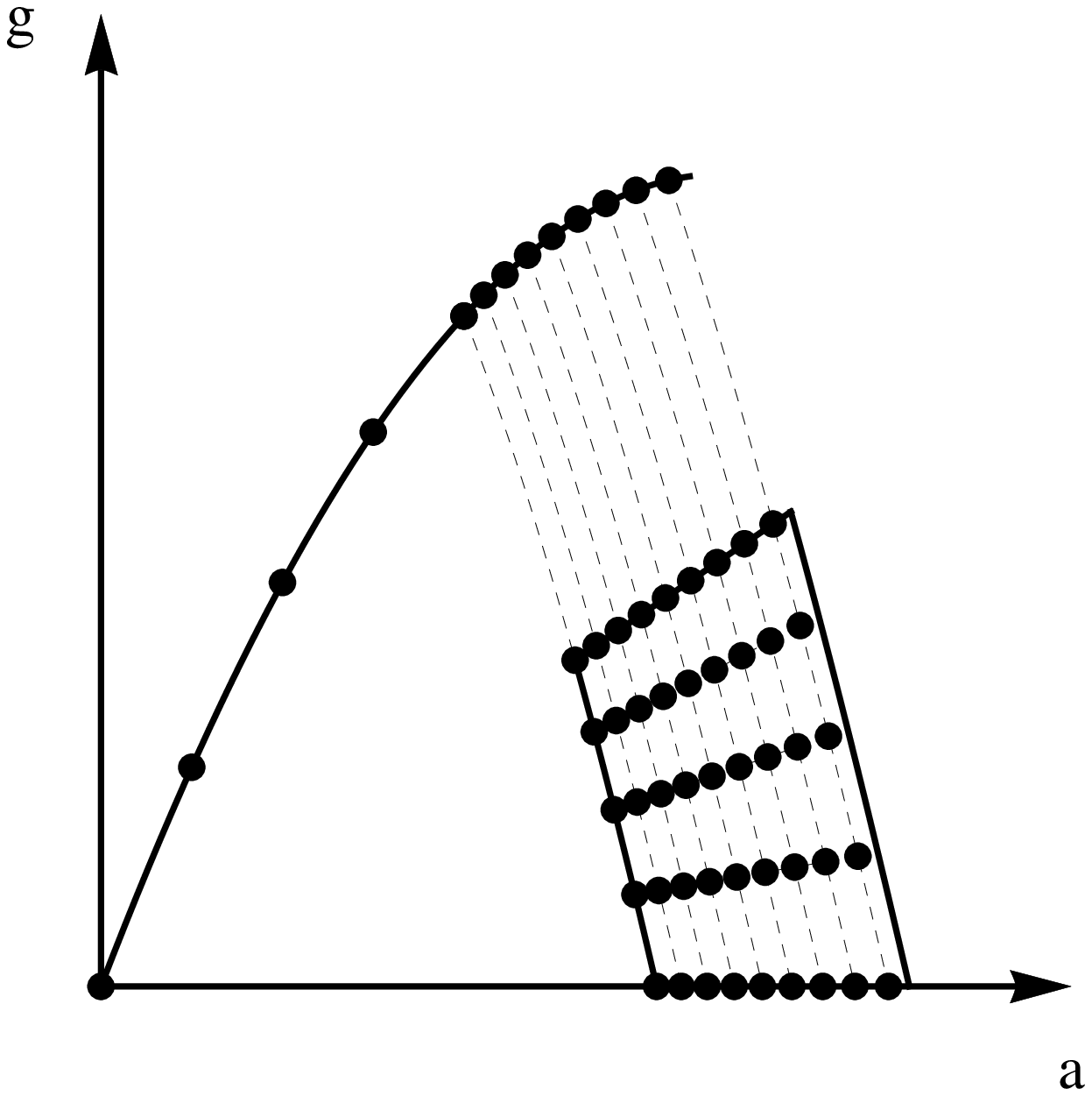}\qquad\qquad
      \psfrag{v}[c,B]{$v$}
      \psfrag{w}[c,B]{$w$}
\includegraphics[width=.3\textwidth]{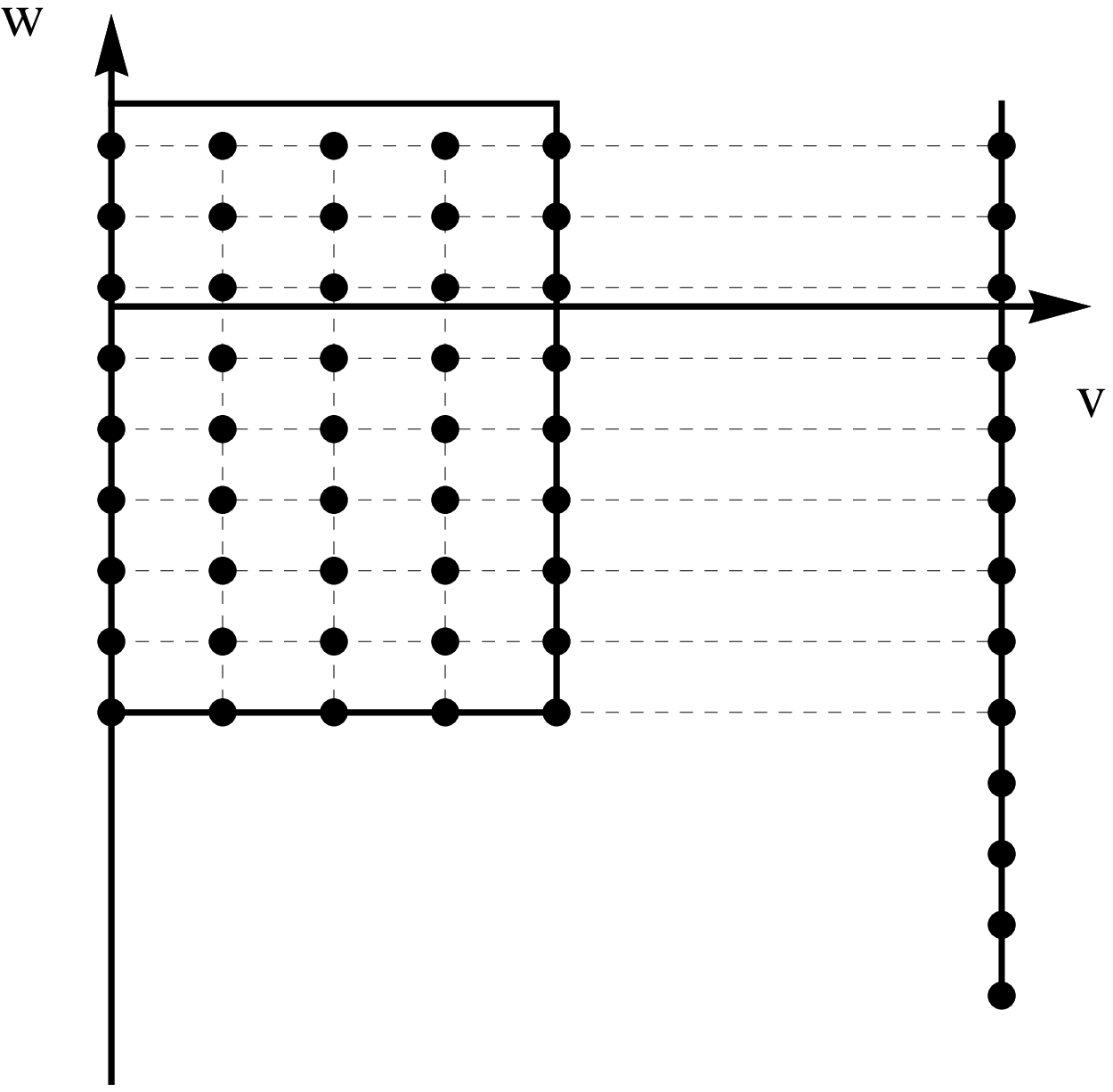}
\end{psfrags}}
\caption{The mesh $\Omega^n$ in the coordinates $(\rho,\rho\,v)$ and $(w_1,w_2)$ respectively on the left and on the right.}
\label{fig:mesh}
\end{figure}
Fix $n\in\naturals$ sufficiently large, let $\varepsilon^n_v \doteq 2^{-n}V_{\rm c}$, $\varepsilon^n_w \doteq 2^{-n}(W_{\rm c}-W_{\min})$ and introduce the grid $\Omega^n$ in $\Omega$, see \figurename~\ref{fig:mesh}, that in the Riemann coordinates writes
\[ \Omega^n \doteq \Omega \cap \left[ W^n \times V^n \right], \]
where
\begin{align*}
&W^n \doteq \left\{W_{\min}+i\,\varepsilon^n_w \colon i=0,\ldots,\left\lfloor\frac{W_{\max}-W_{\min}}{\varepsilon^n_w}\right\rfloor\right\},
&V^n \doteq \left\{i\,\varepsilon^n_v \colon i=0,\ldots,2^{n}\right\} \cup \left\{V_{\rm f}\right\}.
\end{align*}
Consider the approximate Riemann solver $\mathcal{R}^n \colon \Omega^n \times \Omega^n \to \Lloc1(\R;\Omega^n)$ obtained by discretizing the rarefaction waves of $\mathcal{R} \colon \Omega \times \Omega \to \Lloc1(\R;\Omega)$ as follows:
\begin{enumerate}[label={(Rn\arabic*)},leftmargin=*]
\item If $u_\ell,u_r \in \Omega_{\rm f} \cap \Omega^n$ and $w_2(u_r) = w_2(u_\ell) - j\,\varepsilon^n_w$, then
\[
\mathcal{R}^n[u_\ell,u_r](x) \doteq \begin{cases}
u_\ell&\text{if }x<\sigma(u_\ell,u_1),\\
u_i&\text{if } \sigma(u_{i-1}, u_i)< x < \sigma(u_i, u_{i+1}),
	\quad i=1,\ldots,j-1,\\
u_r&\text{if }x<\sigma(u_{j-1},u_r),
\end{cases}
\]
where $u_i \in \Omega_{\rm f} \cap \Omega^n$ are implicitly defined by $w_2(u_i) = w_2(u_\ell) - i\,\varepsilon^n_w$.
\item If $u_\ell,u_r \in \Omega_{\rm c} \cap \Omega^n$, $w_1(u_r) = w_1(u_\ell) + j\,\varepsilon^n_v$ and $w_2(u_r) = w_2(u_\ell)$, then
\[
\mathcal{R}^n[u_\ell,u_r](x) \doteq \begin{cases}
u_\ell&\text{if }x<\sigma(u_\ell,u_1),\\
u_i&\text{if } \sigma(u_{i-1}, u_i)< x < \sigma(u_i, u_{i+1}),
	\quad i=1,\ldots,j-1,\\
u_r&\text{if }x<\sigma(u_{j-1},u_r),
\end{cases}
\]
where $u_i \in \Omega_{\rm c} \cap \Omega^n$ are implicitly defined by $w_1(u_i) = w_1(u_\ell) + i\,\varepsilon^n_v$, $w_2(u_i) = w_2(u_\ell)$.
\end{enumerate}

\begin{proposition}\label{prop:M}
The grid $\Omega^n$ has the following properties:
\begin{enumerate}[label={(M\arabic*)},leftmargin=*]
\item For any point in $\Omega$ there is a point in $\Omega^n$ such that the distance between them is less than $\varepsilon^n_v+\varepsilon^n_w$.
\item For $n$ sufficiently big, any two distinct points in the mesh $\Omega^n$ are distant more that $2^{-1}\min\{\varepsilon^n_v, \varepsilon^n_w\}$.
\item The Riemann problem~\eqref{eq:model}, \eqref{eq:Riemanndatum} with $u_\ell, u_r \in \Omega^n$ admits a global weak solution attaining values in $\Omega^n$.\label{M3}
\end{enumerate}
\end{proposition}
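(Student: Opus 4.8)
The plan is to treat the three items separately; the estimates (M1) and (M2) are elementary computations in the extended Riemann coordinates $(w_1,w_2)$, in which — see Section~\ref{sec:PT} — the free domain is the segment $\Omega_{\rm f}=\{V_{\rm f}\}\times[W_{\min},W_{\max}]$, the congested domain is the box $\Omega_{\rm c}=[0,V_{\rm c}]\times[W_{\rm c},W_{\max}]$, and $\norma{\cdot}$ is the corresponding $\ell^{1}$-distance. For (M1), given $u\in\Omega$ I would round its Riemann coordinates to the nearest admissible node. If $u\in\Omega_{\rm f}$, keep $w_1=V_{\rm f}\in V^n$ and replace $w_2(u)$ by the nearest point of $W^n\cap[W_{\min},W_{\max}]$; since $W^n$ has constant step $\varepsilon^n_w$ and $W_{\min}\in W^n$, this point is at distance $<\varepsilon^n_w$. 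If $u\in\Omega_{\rm c}$, replace $w_1(u)$ by the nearest point of $\{i\,\varepsilon^n_v\}_{i=0}^{2^n}=V^n\cap[0,V_{\rm c}]$ (distance $\le\varepsilon^n_v/2$) and $w_2(u)$ by the nearest point of $W^n\cap[W_{\rm c},W_{\max}]$ (distance $<\varepsilon^n_w$, using $W_{\rm c}=W_{\min}+2^n\varepsilon^n_w\in W^n$). In either case the rounded pair stays in the same axis-parallel domain, hence defines a point of $\Omega^n$ at distance $<\varepsilon^n_v+\varepsilon^n_w$ from $u$.

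For (M2), take $u\ne u'$ in $\Omega^n$; then their $w_1$-components or their $w_2$-components differ. Two distinct points of $W^n$ are at least $\varepsilon^n_w$ apart, and two distinct points of $V^n$ are either at least $\varepsilon^n_v$ apart (when both belong to $\{i\,\varepsilon^n_v\}$) or at least $V_{\rm f}-V_{\rm c}>0$ apart (when one of them is $V_{\rm f}$ and the other lies in $[0,V_{\rm c}]$). Since $\varepsilon^n_v+\varepsilon^n_w\to0$, for $n$ large one has $\frac{1}{2}\min\{\varepsilon^n_v,\varepsilon^n_w\}<\min\{\varepsilon^n_v,\varepsilon^n_w,V_{\rm f}-V_{\rm c}\}$, which yields the claim; this is exactly the role of the hypothesis that $n$ be large, since it prevents the extra node $V_{\rm f}$ from sitting closer to $V_{\rm c}$ than half the mesh size.

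For (M3) the candidate solution is $(t,x)\mapsto\mathcal{R}^n[u_\ell,u_r](x/t)$. By construction it is the juxtaposition of (possibly null) Lax waves of LWR and ARZ — every rarefaction being replaced by the fan of small jumps prescribed by (Rn1)--(Rn2) — together with at most two phase transitions, exactly those of Definition~\ref{def:LWR-ARZ}. I would first check it is a weak solution in the sense of Definition~\ref{def:weakPT}. The shocks and contact discontinuities are classical; for a fan one uses that the fluxes $\rho\mapsto\rho\,v_{\rm f}(\rho)$ and, along a $1$-Lax curve $\{w_2=w\}$, $\rho\mapsto\rho\,[w-p(\rho)]$ are concave by~\eqref{H1} and~\eqref{H2}, so that the Rankine--Hugoniot speeds of successive sub-jumps increase and the fan stays inside the characteristic cone of the corresponding exact rarefaction; hence the pieces of $\mathcal{R}^n[u_\ell,u_r]$ do not overlap any more than those of $\mathcal{R}[u_\ell,u_r]$, which is already a solution by the proposition above. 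The phase transitions propagate with speed $\sigma$ given by~\eqref{eq:sigma}, and comparing the defining relations of $u_m$ in~\ref{LWR-ARZ2} — namely $w_2(u_m)=\max\{W_{\rm c},w_2(u_\ell)\}$, which equals $w_2(u_\ell)$ when $u_\ell\in\Omega_{\rm f}''$ and equals $W_{\rm c}$ when $u_\ell\in\Omega_{\rm f}'$ — and of $u_m',u_m''$ in~\ref{LWR-ARZ3} — with $w_2(u_m')=w_2(u_m'')=w_2(u_\ell)$, the latter because $\rho_{\rm f}(w)$ solves $w=p(\rho_{\rm f}(w))+v_{\rm f}(\rho_{\rm f}(w))$ — against the definitions of $\mathcal{G}_1,\mathcal{G}_2$ shows $(u_\ell,u_m)\in\mathcal{G}_1\cup\mathcal{G}_2$ and $(u_m',u_m'')\in\mathcal{G}_2^T\subset\mathcal{G}_{\rm w}$, so condition~\ref{weakPT3} holds.

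It remains to check that every state attained lies in $\Omega^n$, which is the only genuinely new point. The sub-jump states of (Rn1)--(Rn2) shift $w_2$, resp.\ $w_1$, by integer multiples of $\varepsilon^n_w$, resp.\ $\varepsilon^n_v$, starting from a node and keeping the other invariant frozen, so they stay in $W^n$, resp.\ $V^n$, and in the relevant axis-parallel domain. For the phase-transition states one reads off $w_1(u_m)=v_r$, $w_2(u_m)=\max\{W_{\rm c},w_2(u_\ell)\}$, $w_1(u_m')=V_{\rm c}$, $w_2(u_m')=w_2(u_\ell)$, $w_1(u_m'')=V_{\rm f}$, $w_2(u_m'')=w_2(u_\ell)$: since $u_\ell,u_r\in\Omega^n$ already have their invariants on the grid and $W_{\rm c}\in W^n$, $V_{\rm c},V_{\rm f}\in V^n$, all of these are nodes, so the states lie in $\Omega^n$. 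I expect this last verification to be the delicate part of the whole proposition, and it succeeds precisely because $\mathcal{R}$, written in $(w_1,w_2)$, never produces a ``generic'' intermediate value — it either transports a Riemann invariant or sets it to one of the distinguished levels $W_{\rm c},V_{\rm c},V_{\rm f}$ that were deliberately placed in the mesh — while all ordering and admissibility issues are inherited from $\mathcal{R}$ itself, shown above to be a consistent Riemann solver producing entropy solutions.
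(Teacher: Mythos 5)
Your proposal is correct and follows essentially the same route as the paper, which dismisses (M1)--(M2) as obvious and establishes (M3) by observing that $(t,x)\mapsto\mathcal{R}^n[u_\ell,u_r](x/t)$ is the desired solution since every discontinuity satisfies the Rankine--Hugoniot conditions. You merely supply the details the paper omits (the rounding estimates in the $(w_1,w_2)$ coordinates, the ordering of the fan speeds via concavity of the fluxes, the membership of the phase-transition pairs in $\mathcal{G}_{\rm w}$, and the verification that all intermediate states stay on the grid because $\mathcal{R}$ only transports Riemann invariants or sets them to the distinguished levels $W_{\rm c},V_{\rm c},V_{\rm f}$), all of which check out.
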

\begin{proof}
The first two properties are obvious, while the weak solution satisfying the properties required in~\ref{M3} is constructed by applying the approximate Riemann solver $\mathcal{R}^n$.
Indeed, any discontinuity of $(t,x)\mapsto\mathcal{R}^n[u_\ell,u_r](x/t)$ clearly satisfies the Rankine-Hugoniot jump conditions.
\end{proof}
It is worth to note that $(t,x)\mapsto\mathcal{R}^n[u_\ell,u_r](x/t)$ may well not be an entropy solution even if $u_\ell$ and $u_r$ belong to the same phase domain.

\subsection{An approximate solution}\label{sec:aas}

An approximate solution $u^n$ to the Cauchy problem~\eqref{eq:model}, \eqref{eq:initial} with initial datum $\bar{u}$ is now constructed by applying the wave-front tracking algorithm and the approximate Riemann solver $\mathcal{R}^n$.
We first approximate $\bar{u}$ with $\bar{u}^n \in \PC(\R;\Omega^n)$ such that
\begin{align}\label{eq:apprinitial}
&\norma{\bar{u}^n}_{\L\infty(\R;\Omega)} \le \norma{\bar{u}}_{\L\infty(\R;\Omega)},&
&\lim_{n\to+\infty}\norma{\bar{u}^n-\bar{u}}_{\L1(\R;\Omega)}=0,&
&\tv(\bar{u}^n) \le \tv(\bar{u}).
\end{align}
The approximate solution $u^n$ is then obtained by gluing together the approximate solutions computed by applying $\mathcal{R}^n$ at any discontinuity of $\bar{u}^n$ at time $t=0$ and at any interaction between wave-fronts.
In order to extend the construction globally in time we have to ensure that only finitely many interactions may occur in finite time and that the range of $u^n$ remains in $\Omega^n$.

The latter requirement immediately follows from~\ref{M3} in Proposition~\ref{prop:M}.
The former requirement is obtained through suitable interaction estimates, that also ensure a bound for $\tv(u^n(t))$ uniform in $n$ and $t$.

To this aim we fix $T>0$.
If $T$ is sufficiently small, then we know that the function $u^n(T)$ is piecewise constant with jumps along a finite number of polygonal lines.
If at time $T$ an interaction between waves takes place, then some of the waves may change speed and strength, while some new ones may be created, according to the solution of the Riemann problems at each  of the points of discontinuity of $u^n(T)$.
Conventionally, we assume that the approximate solutions are left continuous in time, i.e.~$u^n(T) = u^n(T^-)$.
Then also $\tv(u^n)$ is left continuous in time and, for any $t$ in a sufficiently small left neighbourhood of $T$, we can write
\begin{equation}\label{eq:approximatedsol}
	u^n(t,x) \doteq \sum_{i \in \mathcal{J}^n} u_{i+\frac{1}{2}}^n \, \caratt{[s_i^n(t),s_{i+1}^n(t)[}(x),
\end{equation}
where $\mathcal{J}^n \subset \integers$, $u_{i+1/2}^n \doteq (\rho_{i+1/2}^n,v_{i+1/2}^n) \in \Omega^n$, $s_{i-1}^n(t) < s_i^n(t)$ and
\begin{align}\label{eq:s}
	&s_i^n(t) \doteq x_i^n + \sigma(u_{i-\frac{1}{2}}^n, u_{i+\frac{1}{2}}^n) \left(t-T\right),&
	&\rho_{i-\frac{1}{2}}^n \ne \rho_{i+\frac{1}{2}}^n,
\end{align}
$\sigma$ being defined by~\eqref{eq:sigma}.

To prove that the approximate solution is well defined and keeps the form~\eqref{eq:approximatedsol} we have to bound a priori the number of waves. 
To this aim we prove that the map $t \mapsto \mathcal{T}^n(t)$ defined by
\begin{align*}
&\mathcal{T}^n \doteq \sum_i \left[
\modulo{w_1(u^n_{i+\frac{1}{2}}) - w_1(u^n_{i-\frac{1}{2}})}
+(1+\delta_i^n) \, \modulo{w_2(u^n_{i+\frac{1}{2}}) - w_2(u^n_{i-\frac{1}{2}})}
\right],
\\
&\delta_i^n \doteq \begin{cases}
1&\text{if } w_1(u^n_{i+\frac{1}{2}}) = w_1(u^n_{i-\frac{1}{2}}) \le V_{\rm c}
\text{ and } w_2(u^n_{i-\frac{1}{2}}) - w_2(u^n_{i+\frac{1}{2}}) > \varepsilon^n_w,\\
0&\text{otherwise},
\end{cases}
\end{align*}
is non-increasing and it decreases by at least $\varepsilon^n_w$ each time the number of waves increases. 
If at time $T$ no interaction occurs, then $\Delta\tv \doteq \tv(u^n(T^+)) - \tv(u^n(T^-)) = 0$, $\Delta\mathcal{T}^n \doteq \mathcal{T}^n(T^+) -\mathcal{T}^n(T^-) = 0$ and the number of waves does not change. 
Assume that at time $T$ an interaction occurs.
For notational convenience we will omit the dependence on $n$ and write, for instance, $\Delta\mathcal{T}$ instead of $\Delta\mathcal{T}^n$.

First, if the interaction involves states in the same phase domain, then it is sufficient to apply the standard theory to obtain that the number of waves does not increase after the interaction and $\Delta\mathcal{T} \leq 2 \, \Delta\tv \leq 0$.
In particular, in the domain of congested phases, $\Omega_{\rm c}$, the result follows from the fact that away from the vacuum ARZ is a Temple system, see~\cite{FerreiraKondo2010, godvik}.

Assume now that phase transitions are involved in the interaction.
For simplicity we describe in detail the interaction between two waves, one connecting $u_\ell$ to $u_m$ and the other connecting $u_m$ to $u_r$.
We have to distinguish the following cases:
\begin{enumerate}[label={(I.\arabic*)},leftmargin=*]

\item
If $u_\ell, u_m \in \Omega_{\rm f}$ and $u_r \in \Omega_{\rm c}$, then $w_2(u_r) = \max\{w_2(u_m), W_{\rm c}\}$ and by~\ref{LWR-ARZ2} the result of the interaction is a phase transition connecting $u_\ell$ to $u_m' \in \Omega_{\rm c}$, with $w_1(u_m') = v_r$, $w_2(u_m') = \max\{w_2(u_\ell), W_{\rm c}\}$, and a possible null contact discontinuity connecting $u_m'$ to $u_r$.
Moreover $\Delta\mathcal{T} = \Delta\tv \le 0$.

\item
If $u_\ell,u_r \in \Omega_{\rm f}$ and $u_m \in \Omega_{\rm c}$, then $u_\ell \in \Omega_{\rm f}'$, $w_2(u_r) = w_2(u_m) = W_{\rm c}$, $w_1(u_m) = V_{\rm c}$ and the result of the interaction is a shock connecting $u_\ell$ to $u_r$.
Moreover $\Delta\mathcal{T} = \Delta\tv < 0$.

\item
If $u_\ell \in \Omega_{\rm f}$ and $u_m,u_r \in \Omega_{\rm c}$, then $w_2(u_m) = w_2(u_r)$ and the result of the interaction is a phase transition connecting $u_\ell$ to $u_r$.
Moreover $\Delta\mathcal{T} = \Delta\tv \le 0$.

\item
If $u_m \in \Omega_{\rm f}$ and $u_\ell,u_r \in \Omega_{\rm c}$, then $w_1(u_\ell) = V_{\rm c} > w_1(u_r)$, $w_2(u_\ell) = w_2(u_m) = w_2(u_r)$ and the result of the interaction is a shock connecting $u_\ell$ to $u_r$.
Moreover $\Delta\mathcal{T} = \Delta\tv < 0$.

\item
If $u_r \in \Omega_{\rm f}$, $u_\ell,u_m \in \Omega_{\rm c}$ and $w_2(u_\ell) < w_2(u_m)$, then $w_1(u_\ell) = w_1(u_m) = V_{\rm c}$, $w_2(u_m) = w_2(u_r)$ and the result of the interaction is a phase transition connecting $u_\ell$ to $u_m' \in \Omega_{\rm f}''$, with $w_2(u_m') = w_2(u_\ell)$, and a shock connecting $u_m'$ to $u_r$.
Moreover $\Delta\mathcal{T} = \Delta\tv = 0$.

\item\label{I6} If $u_r \in \Omega_{\rm f}$, $u_\ell,u_m \in \Omega_{\rm c}$ and $w_2(u_\ell) > w_2(u_m)$, then $w_1(u_\ell) = w_1(u_m) = V_{\rm c}$, $w_2(u_m) = w_2(u_r)$ and the result of the interaction is a phase transition connecting $u_\ell$ to $u_m' \in \Omega_{\rm f}''$, with $w_2(u_m') = w_2(u_\ell)$, and a discretized rarefaction connecting $u_m'$ to $u_r$.
Moreover $\Delta\tv=0$ and
\[\Delta\mathcal{T} = 
\begin{cases}
-\left[w_2(u_\ell) - w_2(u_m)\right]&\text{if }w_2(u_\ell)-w_2(u_m)>\varepsilon_w,\\
0&\text{otherwise}.
\end{cases}\]

\end{enumerate}

In conclusion we proved that $\Delta\tv \le 0$.
Moreover, if after the interaction the number of waves does not increase then $\Delta\mathcal{T} \le 0$, otherwise, if after the interaction the number of waves increases, namely in the case~\ref{I6} with $w_2(u_\ell)-w_2(u_m)>\varepsilon_w$, then $\Delta\mathcal{T} = -\left[w_2(u_\ell) - w_2(u_m)\right] < - \varepsilon_w$.
As a consequence, the total variation as well as the number of waves of the approximate solution is bounded for any time and it keeps the form~\eqref{eq:approximatedsol}.

\subsection{Convergence}\label{sec:conv}

In this section we first prove the main a priori estimates on the approximate solution $u^n$ and then we prove that $u^n$ converges (up to a subsequence) in $\Lloc1(\R_+ \times \R; \Omega)$ to a function $u$ satisfying the estimates stated in Theorem~\ref{thm:1}.

Since at any interaction $\Delta\tv \le 0$, by~\eqref{eq:apprinitial} we have that for any $t \ge 0$
\[
\tv(u^n(t)) \le \tv(\bar{u}^n) \le \tv(\bar{u}).
\]
Moreover, observe that $\norma{u^n(t)}_{\L\infty(\R;\Omega)} \le C \doteq \max\{\modulo{W_{\max}},\modulo{W_{\min}}\} + V_{\max}$ and
\begin{equation}\label{eq:bagno}
\norma{u^n(t)-u^n(s)}_{\L1(\R;\Omega)} \le L \, \modulo{t-s},
\end{equation}
with $L \doteq \tv(\bar{u}) \, \max\{V_{\max}, R_{\max} \, p'(R_{\max})\}$.
Indeed, if no interaction occurs for times between $t$ and $s$, then
\[
\norma{u^n(t)-u^n(s)}_{\L1(\R;\Omega)} \le 
\sum_{i \in \mathcal{J}^n} \norma{(t-s) \, \dot{s}^n_i(t) \, (u^n_{i-\frac{1}{2}} - u^n_{i+\frac{1}{2}})}
\le L \, \modulo{t-s}.
\]
The case when one or more interactions take place for times between $t$ and $s$ is similar, because by the finite speed of propagation of the waves, the map $t \mapsto u^n(t)$ is $\L1$-continuous across interaction times.

Thus, by applying Helly's Theorem in the form~\cite[Theorem~2.4]{bressanbook}, there exists a function $u \in \Lloc1(\R_+\times\R;\Omega)$ and a subsequence, still denoted $(u^n)_n$, such that $(u^n)_n$ converges to $u$ in $\Lloc1(\R_+\times\R;\Omega)$ as $n$ goes to infinity. 
Moreover, $u$ satisfies the following estimates for any $t,s>0$:
\begin{align*}
&\tv(u(t)) \le \tv(\bar{u}),&
&\norma{u(t)}_{\L\infty(\R;\Omega)} \le C,&
&\norma{u(t)-u(s)}_{\L1(\R;\Omega)} \le L \, \modulo{t-s}.
\end{align*}
In particular, the above estimates ensure that the number of phase transitions performed by $x \mapsto u(t,x)$ is uniformly bounded in $t$.

\section{Technical section}\label{sec:ts}

\subsection{Proof of Lemma~\ref{lem:DMB}}\label{sec:DMB}

The approximate solution $u^n$ is constructed by applying the wave-front tracking method and the approximate Riemann solver $\mathcal{R}^n$, obtained from $\mathcal{R}$ given in Definition~\ref{def:LWR-ARZ} by discretizing the rarefactions, see Section~\ref{sec:exactpprR}.
To prove the property~\ref{entropyPT3} of Definition~\ref{def:entropyPT} for $u^n$, it is therefore sufficient to observe that the property is satisfied by $\mathcal{R}[u_\ell,u_r]$ for any $u_\ell,u_r \in \Omega$.

To prove that the number of phase transitions performed by $x\mapsto u^n(t,x)$, $t\ge0$, does not increase with time it is sufficient to observe that the approximate solution of any Riemann problem given by $\mathcal{R}^n$ involves at most one phase transition.
However, to complete the proof we have to study how the number of phase transitions changes after a wave interaction.
First, if the states involved in an interaction are in the same phase, then no new phase transition is created after the interaction because both $\Omega_{\rm c}$ and $\Omega_{\rm f}$ are invariant domains for $\mathcal{R}^n$.
Assume now that a phase transition is involved in the interaction.
We have to distinguish then the following cases:
\begin{figure}[ht]
\centering{
\begin{psfrags}
      \psfrag{r}[c,b]{$\rho$}
      \psfrag{f}[c,t]{$\rho\,v$}
      \psfrag{a}[c,b]{$u_\ell^1~$}
      \psfrag{b}[c,b]{$u_\ell^2~$}
      \psfrag{c}[c,b]{}
      \psfrag{d}[c,b]{}
      \psfrag{e}[c,b]{$u_\ell^h~$}
      \psfrag{g}[l,c]{$~u_r^1$}
      \psfrag{h}[c,b]{}
      \psfrag{i}[l,c]{$~u_r^k$}
      \psfrag{l}[c,b]{$u_m$}
\includegraphics[width=.3\textwidth]{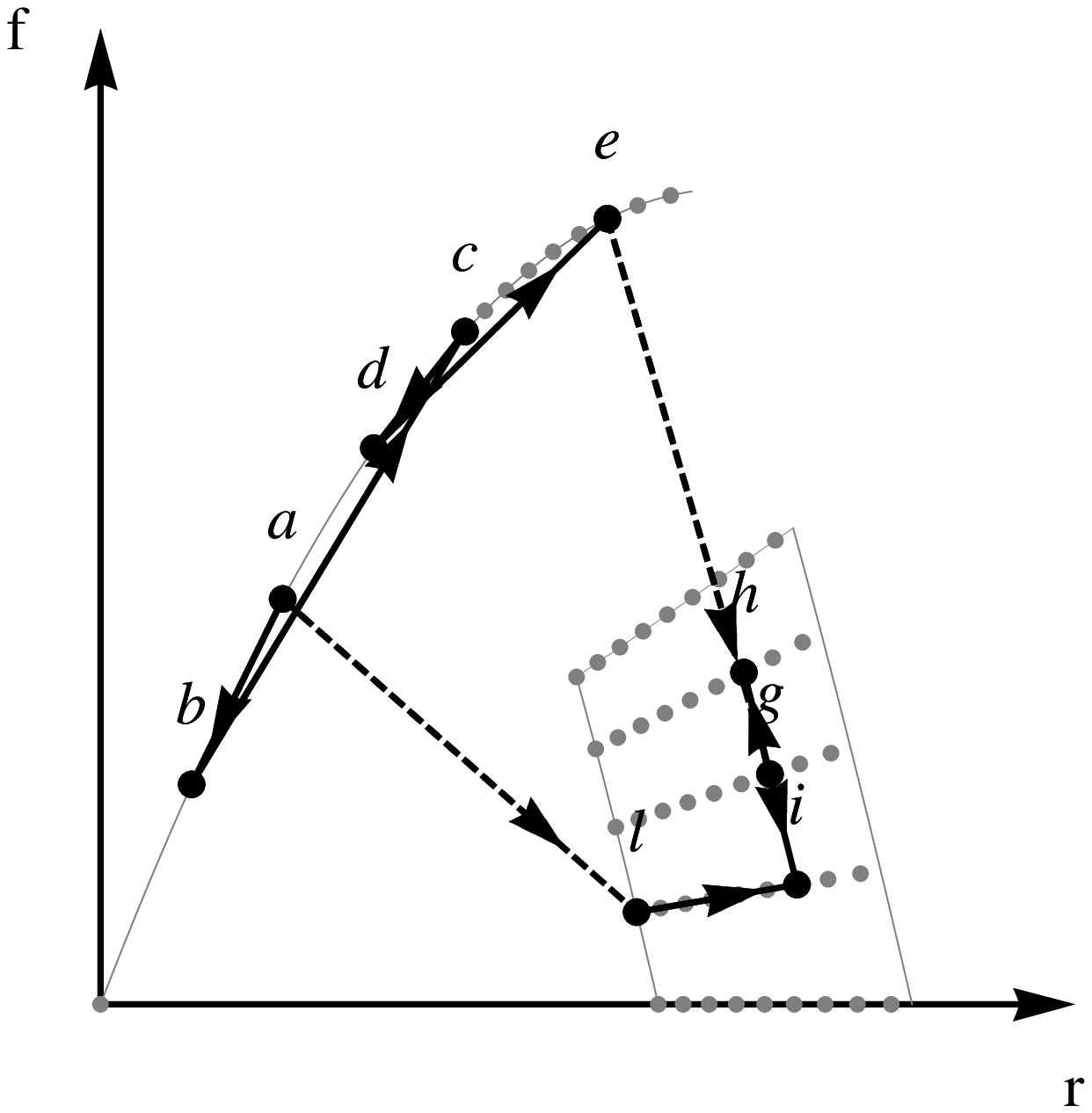}\qquad
\includegraphics[width=.3\textwidth]{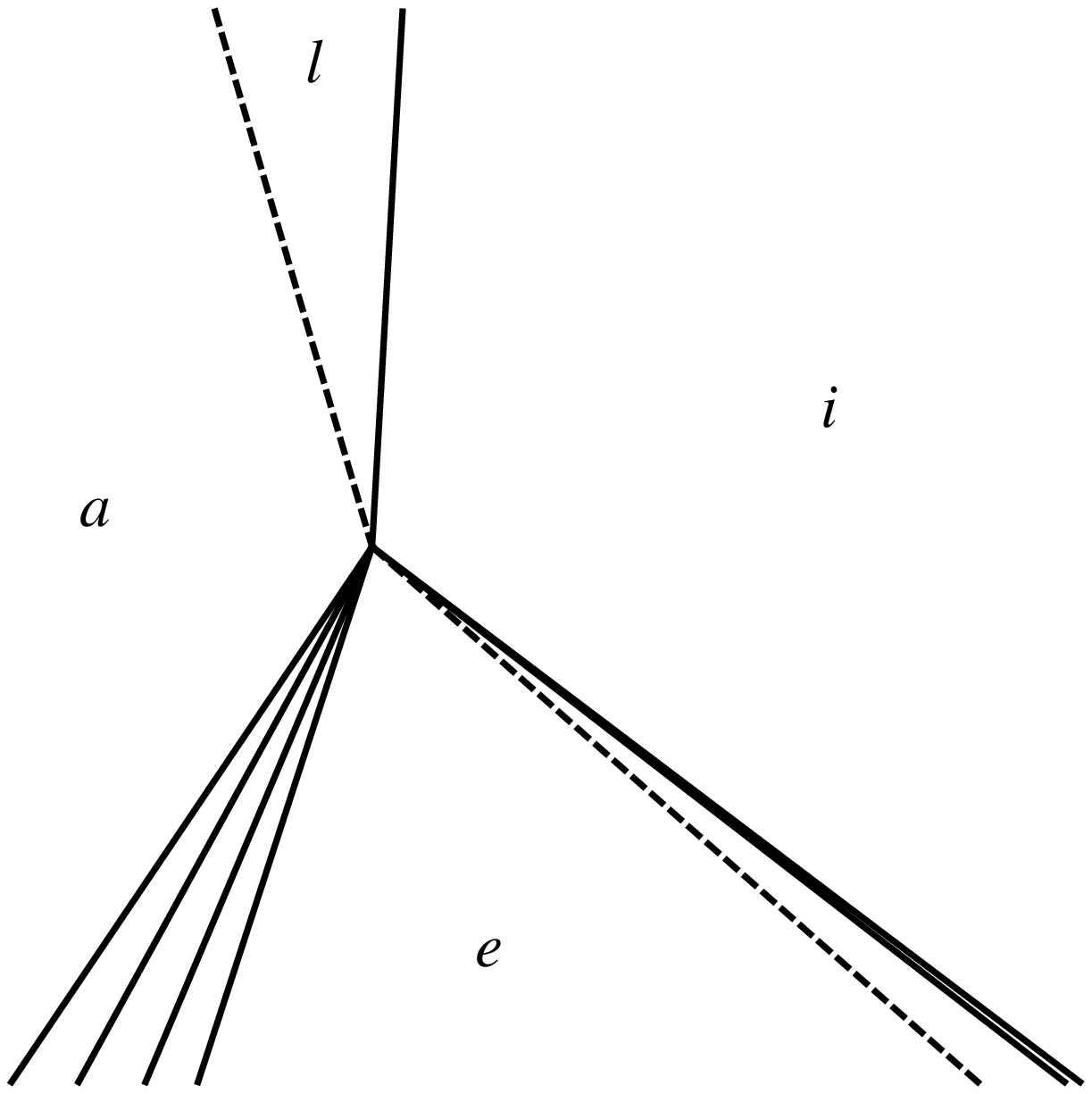}
\end{psfrags}}
\caption{An interaction involving only one phase transition from $\Omega_{\rm f}$ to $\Omega_{\rm c}$.
More precisely, the above interaction involves waves between states $u_\ell^i \in \Omega_{\rm f}$, $i=1,\ldots,h$, followed by a phase transition from $u_\ell^h$ to $u_r^1$ and represented with a dashed line, followed by waves between states $u_r^i \in \Omega_{\rm c}$, $i=1,\ldots,k$.
The result of this interaction is a phase transition from $u_\ell^1$ to $u_m = (p^{-1}(W_{\rm c}-v_r^k),v_r^k)$ and represented with a dashed line, followed by a contact discontinuity from $u_m$ to $u_r^k$.}
\label{fig:interactions01}
\end{figure}
\begin{itemize}[leftmargin=*]
\item Assume that only one phase transition is involved in the interaction and that it is from $\Omega_{\rm c}$ to $\Omega_{\rm f}$.
In this case the only possibility is that a single contact discontinuity between states in $\Omega_{\rm c}$ reaches the phase transition from the left.
The result of the interaction is then a phase transition from $\Omega_{\rm c}$ to $\Omega_{\rm f}$, followed by waves between states in $\Omega_{\rm f}$.
Therefore the number of phase transitions does not change.
\item Assume that only one phase transition is involved in the interaction and that it is from $\Omega_{\rm f}$ to $\Omega_{\rm c}$, see \figurename~\ref{fig:interactions01}.
Then the only possible waves between states in the same phase that can interact with such phase transition coming from the right must have the same second Riemann invariant coordinate $w_2$.
The result of the interaction is a phase transition from $\Omega_{\rm f}$ to $\Omega_{\rm c}$, followed by a possibly null contact discontinuity between states in $\Omega_{\rm c}$.
Therefore the number of phase transitions does not change.
\item Finally, assume that more than one phase transition is involved in the interaction.
By the previous study on the possible waves that can interact with a phase transition, we deduce that the only possible interaction involving more than one phase transition is between possible null waves in $\Omega_{\rm f}$, a phase transition from $\Omega_{\rm f}'$ to $u_{\rm c} \doteq (p^{-1}(W_{\rm c}-V_{\rm c}),V_{\rm c}) \in \Omega_{\rm c}$ and a phase transition from $u_{\rm c}$ to $(R_{\rm f}',v_{\rm f}(R_{\rm f}'))$.
The result of the interaction is then a possible null single shock.
Therefore the number of phase transitions decreases.
\end{itemize}
In conclusion, we proved that the number of phase transitions does not change as long as two phase transitions do not interact, while no phase transition results from the interaction between phase transitions.
For this reason the number of phase transitions can decrease only by an even number.

\subsection{Proof of Theorem~\ref{thm:1}}\label{sec:1}
Let $u$ be the function constructed in Section~\ref{sec:wft}.
The first part of the theorem is already proved in Section~\ref{sec:conv}.

We prove now that $u$ is a weak solution to the Cauchy problem~\eqref{eq:model}, \eqref{eq:initial} in the sense of Definition~\ref{def:weakPT} in the case the characteristic field corresponding to the free phase is linearly degenerate, namely under the assumption that
\begin{equation}\label{eq:AnimalsAsLeaders}
V_{\max} = V_{\rm f}.
\end{equation}
Clearly, the initial condition~\eqref{eq:initial} holds by~\eqref{eq:apprinitial}, \eqref{eq:bagno} and the convergence in $\Lloc1$ of $u^n$ to $u$.
Therefore, we just have to check the conditions~\ref{weakPT1}, \ref{weakPT2} and~\ref{weakPT3} of Definition~\ref{def:weakPT}.
Even if these conditions are satisfied by $u^n$, the $\Lloc1$-convergence doesn't ensure that the limit $u$ inherits these properties.
However, under the assumption~\eqref{eq:AnimalsAsLeaders} we can prove that $u$ satisfies the integral condition~\eqref{eq:weakPT}, which implies~\ref{weakPT1}, \ref{weakPT2} and~\ref{weakPT3}.

\begin{lemma}\label{lemma:AnimalsAsLeaders}
Under the assumption~\eqref{eq:AnimalsAsLeaders}, the function $u$ satisfies for any test function $\varphi$ in $\Cc\infty(]0,+\infty[\times\R;\R)$ the following identities
\begin{align}\label{eq:weakPT}
&\iint_{\R_+\times\R} \rho \left[\vphantom{\sum} \varphi_t+v \, \varphi_x\right] \d x \, \d t = 0,&
&\iint_{\R_+\times\R} \rho \, W(u) \left[\vphantom{\sum} \varphi_t+v \, \varphi_x\right] \d x \, \d t = 0,
\end{align}
where $W \in \C0(\Omega; [W_{\rm c},W_{\max}])$ is defined by
\begin{equation}\label{eq:W}
W(u) \doteq
\max\left \{w_2(u), W_{\rm c}\right \}
=
\begin{cases}
v+p(\rho)&\text{if }u\in\Omega_{\rm c} \cup \Omega_{\rm f}'',\\
W_{\rm c}&\text{if }u\in \Omega_{\rm f}'.
\end{cases}
\end{equation}
\end{lemma}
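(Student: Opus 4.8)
The plan is to show that every wave‑front tracking approximation $u^n$ satisfies the two identities in~\eqref{eq:weakPT} \emph{exactly}, and then to pass to the limit $n\to\infty$. The only role of~\eqref{eq:AnimalsAsLeaders} is that, together with $v_{\rm f}'\le0$, it forces $v_{\rm f}\equiv V_{\rm f}$ on $[0,R_{\rm f}'']$, so that $v$ is the constant $V_{\rm f}$ on the whole free phase $\Omega_{\rm f}$; consequently on $\Omega_{\rm f}$ the flux $\rho\,W(u)\,v$ equals $V_{\rm f}\,\rho\,W(u)$, i.e.\ the generalized momentum equation degenerates there into a multiple of the conservation of vehicles. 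I would first record the (elementary) continuity of $W$ on $\Omega$: the two branches in~\eqref{eq:W} agree at $\rho=R_{\rm f}'$ since $v_{\rm f}(R_{\rm f}')+p(R_{\rm f}')=W_{\rm c}$, while $W_{\rm c}\le W(u)\le W_{\max}$ everywhere by construction and $\Omega_{\rm f}$, $\Omega_{\rm c}$ are disjoint closed sets.

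Next I would reduce the identities for $u^n$ to jump relations. Since $u^n(t,\cdot)$ is piecewise constant with jumps along the Lipschitz fronts $x=s_i^n(t)$, both $\rho^n(\varphi_t+v^n\varphi_x)$ and $\rho^nW(u^n)(\varphi_t+v^n\varphi_x)$ can be integrated by parts region by region; as $\varphi$ is compactly supported in $]0,+\infty[\times\R$ there is no boundary term, and~\eqref{eq:weakPT} for $u^n$ becomes the Rankine--Hugoniot relations $\sigma(\rho_+-\rho_-)=\rho_+v_+-\rho_-v_-$ and $\sigma(\rho_+W_+-\rho_-W_-)=\rho_+W_+v_+-\rho_-W_-v_-$ at every front, with $W_\pm\doteq W(u_\pm)$ and $\sigma$ as in~\eqref{eq:sigma}. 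The first holds by the very definition~\eqref{eq:s} of $s_i^n$; for the second I would run through the fronts that $\mathcal{R}^n$ can produce:
\begin{itemize}[leftmargin=*]
\item fronts internal to $\Omega_{\rm f}$ (including the discretized fronts of~(Rn1)): here $v_-=v_+=V_{\rm f}$ by~\eqref{eq:AnimalsAsLeaders}, so both relations collapse to $\sigma=V_{\rm f}$;
\item discretized $1$-rarefaction fronts of~(Rn2) in $\Omega_{\rm c}$: across these $w_2$ is constant, hence $W(u)=\max\{w_2(u),W_{\rm c}\}=w_2(u)$ is constant, and dividing the second relation by this common value returns the first;
\item $2$-contact discontinuities in $\Omega_{\rm c}$: $v$ is constant across them, so both relations collapse to $\sigma=v$;
\item $1$-shocks in $\Omega_{\rm c}$: both equations of ARZ~\eqref{eq:model}-right hold in Rankine--Hugoniot form, and $\rho\,W(u)=\rho\,(v+p(\rho))$ is exactly the conserved density of the second;
\item phase transitions: from the explicit intermediate states of Definition~\ref{def:LWR-ARZ} one checks $W(u_-)=W(u_+)$ — in case~\ref{LWR-ARZ2} because $w_2(u_m)=\max\{W_{\rm c},w_2(u_\ell)\}=W(u_\ell)$, and in case~\ref{LWR-ARZ3} because $w_2(u_m')=w_2(u_\ell)$ and, by the defining property of $\rho_{\rm f}$, $w_2(u_m'')=v_{\rm f}(\rho_m'')+p(\rho_m'')=w_2(u_\ell)$ — so once more the second relation follows from the first.
\end{itemize}

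Finally I would pass to the limit. By the a priori bounds and the $\Lloc1$-convergence $u^n\to u$ established in Section~\ref{sec:conv}, together with the continuity of $W$ on the compact set $\Omega$, one obtains $\rho^n\to\rho$, $v^n\to v$ and $W(u^n)\to W(u)$ in $\Lloc1(\R_+\times\R)$ with all sequences uniformly bounded, whence $\rho^nv^n\to\rho v$, $\rho^nW(u^n)\to\rho W(u)$ and $\rho^nW(u^n)v^n\to\rho W(u)v$ in $\Lloc1$; since each $\varphi$ in $\Cc\infty(]0,+\infty[\times\R;\R)$ is compactly supported, the identities~\eqref{eq:weakPT} written for $u^n$ pass to the limit and hold for $u$. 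The hard part of the argument is the front-by-front check of the second Rankine--Hugoniot relation, and within it the verification that $W$ is continuous across every phase transition admitted by $\mathcal{R}^n$ (equivalently, that the generalized momentum is transported through a phase transition at the geometric speed $\sigma$); the limiting step is then routine. A minor point to handle with care is that the $\Lloc1$-convergence must be read so as to include convergence of the densities $\rho^n$, which under~\eqref{eq:AnimalsAsLeaders} is legitimate because in the free phase the density is merely transported at the constant speed $V_{\rm f}$.
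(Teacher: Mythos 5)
Your proof is correct and follows essentially the same route as the paper: reduce the identities for the wave-front tracking approximations $u^n$ to Rankine--Hugoniot relations at each front via Green--Gauss, verify them front by front, and pass to the $\Lloc1$ limit using the uniform bounds and the continuity of $W$. The paper compresses your wave-type enumeration into a single dichotomy --- across every front either $W$ is continuous (so the second relation is $W$ times the first) or, by~\eqref{eq:AnimalsAsLeaders}, $v$ is continuous (so $\dot{s}_i^n=v_\pm$ and both relations are immediate) --- but the substance is identical.
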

\begin{proof}
Fix a test function $\varphi$ in $\Cc\infty(]0,+\infty[\times\R;\R)$. 
Since $u^n$, $n \in \naturals$, are uniformly bounded and $W$ is uniformly continuous, it suffices to prove that
\[
\lim_{n\to+\infty}\norma{\iint_{\R_+\times\R} \rho^n \left[\varphi_t + v^n \, \varphi_x\right] \begin{pmatrix}1\\W(u^n)\end{pmatrix}\d x \, \d t} = 0.
\]
Choose $T>0$ such that $\varphi(t,x)=0$ whenever $t \not\in\left]0,T\right[$. 
With the same notation introduced in~\eqref{eq:approximatedsol}, by the Green-Gauss formula, the double integral above can be written as
\[
\int_0^T \sum_{i \in \mathcal{J}^n}
\left[
\dot{s}_i^n(t) \, \Delta Y_i(t) - \Delta F_i(t) \right]
\varphi(t,s_i^n(t)) \, \d t,
\]
where
\begin{align*}
&\Delta Y_i(t) \doteq
\rho_{i+\frac{1}{2}}^n \begin{pmatrix}1\\W(u^n_{i+\frac{1}{2}})\end{pmatrix}
-
\rho_{i-\frac{1}{2}}^n \begin{pmatrix}1\\W(u^n_{i-\frac{1}{2}})\end{pmatrix},&
&\Delta F_i(t) \doteq
\rho_{i+\frac{1}{2}}^n \, v_{i+\frac{1}{2}}^n \begin{pmatrix}1\\W(u^n_{i+\frac{1}{2}})\end{pmatrix}
-
\rho_{i-\frac{1}{2}}^n \, v_{i-\frac{1}{2}}^n \begin{pmatrix}1\\W(u^n_{i-\frac{1}{2}})\end{pmatrix}.
\end{align*}
By construction any discontinuity satisfies 
\begin{equation}\label{eq:Opeth}\tag{$\spadesuit$}
\norma{\dot{s}_i^n(t) \, \Delta Y_i(t) - \Delta F_i(t)} =0.
\end{equation}
Indeed, if $W(u^n_{i-1/2}) = W(u^n_{i+1/2})$, then it is sufficient to recall the definition of $s_i^n$ and $\sigma$ given respectively in~\eqref{eq:s} and~\eqref{eq:sigma}.
On the other hand, if $W(u^n_{i-1/2}) \neq W(u^n_{i+1/2})$, then by~\eqref{eq:AnimalsAsLeaders} we have that $v^n_{i-1/2} = v^n_{i+1/2}$,  and therefore $\dot{s}_i^n(t) = v^n_{i\pm1/2}$, which implies that~\eqref{eq:Opeth} holds true.
\end{proof}

To conclude that $u$ is a weak solution, it remains to prove that from~\eqref{eq:weakPT} we can deduce the conditions~\ref{weakPT1}, \ref{weakPT2} and~\ref{weakPT3} of Definition~\ref{def:weakPT}.
\begin{enumerate}[label={(W.\arabic*)},leftmargin=*]
\item[\ref{weakPT1}]
Fix a test function $\varphi$ in $\Cc\infty(]0,+\infty[\times\R;\R)$ with support in $u^{-1}(\Omega_{\rm f})$.
Then the equality~\eqref{eq:weakLWR} holds true because it coincides with the first condition given in~\eqref{eq:weakPT}.
\item[\ref{weakPT2}]
Fix a test function $\varphi$ in $\Cc\infty(]0,+\infty[\times\R;\R)$ with support in $u^{-1}(\Omega_{\rm c})$.
Then the equality~\eqref{eq:weakARZ} holds true because it coincides with~\eqref{eq:weakPT}, being $W \equiv w_2$ in $\Omega_{\rm c}$.
\item[\ref{weakPT3}]
Assume that $x \mapsto u(t,x)$, $t>0$, performs a phase transition from $u_-$ to $u_+$.
Then from~\eqref{eq:weakPT} we deduce that the speed of propagation $\sigma$ of the phase transition satisfies
\begin{align*}
&\left[\rho_+-\rho_-\right] \sigma = \rho_+ \, v_+-\rho_- \, v_-,&
&\left[\rho_+ \, W(u_+)-\rho_- \, W(u_-)\right] \sigma = \rho_+\, W(u_+) \, v_+-\rho_- \, W(u_-) \, v_-,
\end{align*}
or equivalently
\begin{align*}
&\rho_+\left[v_+-\sigma\right]=\rho_-\left[v_--\sigma\right],&
&\rho_-\left[v_--\sigma\right]\left[W(u_-)-W(u_+)\right]=0.
\end{align*}
By the definitions of $\Omega_{\rm f}$ and $\Omega_{\rm c}$ we have that $v_- \ne v_+$.
Hence from the above system we deduce that
\[
\rho_- \, \rho_+ \, [W(u_-)-W(u_+)] = 0.
\]
Now, to conclude the proof it is sufficient to observe that
\[
\mathcal{G}_{\rm w} =
\left\{\vphantom{\Omega_{\rm f}'} (u_-,u_+) \in \Omega\times\Omega \colon \rho_- \, \rho_+ \, [W(u_-)-W(u_+)] = 0\right \}.
\]
\end{enumerate}

Now, it only remains to prove that the function $u$ is an entropy solution to the Cauchy problem~\eqref{eq:model}, \eqref{eq:initial} in the sense of Definition~\ref{def:entropyPT}.
Clearly $u$ satisfies the condition~\ref{entropyPT1} of Definition~\ref{def:entropyPT}.
Indeed by~\eqref{eq:AnimalsAsLeaders} any discontinuity between states in $\Omega_{\rm f}$ is a contact discontinuity.
For this reason $u$ satisfies the entropy condition~\eqref{eq:entropyLWR} with the equality for any test function $\varphi$ in $\Cc\infty(]0,+\infty[\times\R;\R_+)$ with support in $u^{-1}(\Omega_{\rm f})$.
To prove the remaining conditions we need the following

\begin{lemma}\label{lemma:entropyaltPT}
Under the assumption~\eqref{eq:AnimalsAsLeaders}, the function $u$ satisfies for any non-negative test function $\varphi$ belonging to $\Cc\infty(]0,+\infty[\times\R;\R)$ and for any constant $k$ in $[0,V_{\rm f}]$ the estimate
\begin{equation}\label{eq:entropyPT}
\iint_{\R_+\times\R} \left[\mathcal{E}^k(u) \, \varphi_t + \mathcal{Q}^k(u) \, \varphi_x\right] \d x \, \d t \ge 0,
\end{equation}
where, see \figurename~\ref{fig:Rku},
\begin{align*}
&\mathcal{E}^k(u) \doteq
\begin{cases}
0&\text{if }v\le k,
\\
1-\dfrac{\rho}{R^k(W(u))}&\text{if }v>k,
\end{cases}&
&\mathcal{Q}^k(u) \doteq
\begin{cases}
0&\text{if }v\le k,
\\
k-\dfrac{\rho \, v}{R^k(W(u))}&\text{if }v>k,
\end{cases}
\end{align*}
\[
R^k(w) \doteq
\begin{cases}
p^{-1}(w-k)&\text{if }k \le V_{\rm c},
\\
\dfrac{V_{\rm f} - \sigma(p^{-1}(w-V_{\rm f}), V_{\rm f}, p^{-1}(w-V_{\rm c}),V_{\rm c})}{k - \sigma(p^{-1}(w-V_{\rm f}), V_{\rm f}, p^{-1}(w-V_{\rm c}),V_{\rm c}))} \, p^{-1}(w-V_{\rm f})&\text{if }k > V_{\rm c},
\end{cases}
\]
with $W$ given by~\eqref{eq:W}.
\end{lemma}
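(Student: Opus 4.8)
The plan is to establish \eqref{eq:entropyPT} for the limit $u$ by proving it, up to a vanishing error, for the approximating sequence $u^n$ and then passing to the limit, exactly in the spirit of Lemma~\ref{lemma:AnimalsAsLeaders}. First I would record that under \eqref{eq:AnimalsAsLeaders} one has $v_{\rm f}\equiv V_{\rm f}$, so $\Omega_{\rm f}$ and $\Omega_{\rm c}$ are at positive distance in the $(\rho,v)$ plane and $W$ is continuous on $\Omega_{\rm f}$; inspecting the formulas then shows that $\mathcal{E}^k$ and $\mathcal{Q}^k$ are bounded and continuous on $\Omega$ (the two branches match along $\{v=k\}$, where $\rho=R^k(W(u))$). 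Hence $\mathcal{E}^k(u^n)\to\mathcal{E}^k(u)$ and $\mathcal{Q}^k(u^n)\to\mathcal{Q}^k(u)$ in $\Lloc1$, and it suffices to prove that, for every non-negative $\varphi$ in $\Cc\infty(]0,+\infty[\times\R;\R)$,
\[
\iint_{\R_+\times\R}\left[\mathcal{E}^k(u^n)\,\varphi_t+\mathcal{Q}^k(u^n)\,\varphi_x\right]\d x\,\d t \ \ge\ -\omega_n , \qquad \omega_n\to0 .
\]
With the notation \eqref{eq:approximatedsol}--\eqref{eq:s}, the Green--Gauss formula rewrites the left-hand side, just as in the proof of Lemma~\ref{lemma:AnimalsAsLeaders}, as $\int_0^T\sum_i\big[\dot s_i^n(t)\,\Delta\mathcal{E}_i^n(t)-\Delta\mathcal{Q}_i^n(t)\big]\,\varphi(t,s_i^n(t))\,\d t$, where $\Delta(\cdot)_i^n$ is the jump of the indicated quantity (right state minus left state) across the $i$-th front. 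The whole matter therefore reduces to checking that $\dot s_i^n\,\Delta\mathcal{E}_i^n-\Delta\mathcal{Q}_i^n\ge0$ for each front, apart from the fronts forming a discretized rarefaction, for which the standard wave-front tracking argument gives a total contribution $\ge-\omega_n$ with $\omega_n\to0$ (within one rarefaction fan only the step straddling $\{v=k\}$, if any, carries a non-zero — and negative — contribution, of size comparable to $\varepsilon^n_v$).

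The sign is checked front by front, using the elementary fact that $(\mathcal{E}^k,\mathcal{Q}^k)$ vanishes identically on $\{v\le k\}$ and is \emph{affine} in the conserved variables $(\rho,\rho v)$ on $\{v>k\}$ once $W(u)$ is held fixed. A front internal to $\Omega_{\rm f}$ is, by \eqref{eq:AnimalsAsLeaders}, a contact travelling at speed $V_{\rm f}$, and since $\mathcal{Q}^k=(k-V_{\rm f})+V_{\rm f}\,\mathcal{E}^k$ on $\Omega_{\rm f}$ one gets $\dot s_i^n\,\Delta\mathcal{E}_i^n-\Delta\mathcal{Q}_i^n=0$ (trivially so when $k=V_{\rm f}$). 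A front internal to $\Omega_{\rm c}$ contributes $0$ when $k>V_{\rm c}$ (there $v\le V_{\rm c}<k$, so $\mathcal{E}^k\equiv\mathcal{Q}^k\equiv0$), and when $k\le V_{\rm c}$ it reduces to the ARZ entropy inequality, because on $\Omega_{\rm c}$ one has $W\equiv w_2$ and $R^k=p^{-1}(\cdot-k)$, so $(\mathcal{E}^k,\mathcal{Q}^k)=(\mathcal{E}^k_{\rm ARZ},\mathcal{Q}^k_{\rm ARZ})$; the required sign then follows from the entropy admissibility of the Lax solver of ARZ (Definition~\ref{def:solARZ}, see \cite{BCM2order,BCJMU-order2}), the discretized rarefactions being accounted for as above.

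It remains to treat the phase transitions. By Lemma~\ref{lem:DMB} every one of them, $u_-\to u_+$, has $(u_-,u_+)\in\mathcal{G}_{\rm e}=\mathcal{G}_1\cup\mathcal{G}_2\cup\mathcal{G}_3$; in each class the marker is preserved, $W(u_-)=W(u_+)=:w$, the speed equals $\sigma(u_-,u_+)$, one foot lies in $\Omega_{\rm f}$ (where $v=V_{\rm f}$) and the other in $\Omega_{\rm c}$ (where $v\le V_{\rm c}$). If the $\Omega_{\rm c}$-foot has $v>k$ — which forces $k<V_{\rm c}$ and $R^k=p^{-1}(\cdot-k)$ — then, $w$ being common to the two feet, $(\mathcal{E}^k,\mathcal{Q}^k)$ is affine in $(\rho,\rho v)$ across the jump with the constant denominator $p^{-1}(w-k)$, and Rankine--Hugoniot gives $\dot s_i^n\,\Delta\mathcal{E}_i^n-\Delta\mathcal{Q}_i^n=0$; a transition of $\mathcal{G}_3$ joins $(p^{-1}(w-V_{\rm c}),V_{\rm c})$ to $(p^{-1}(w-V_{\rm f}),V_{\rm f})$ and its speed is exactly the chord slope built into $R^k$, so it likewise contributes $0$. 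The genuinely nontrivial case — the step I expect to be the main obstacle — is a transition of $\mathcal{G}_1\cup\mathcal{G}_2$ of type \ref{LWR-ARZ2} whose $\Omega_{\rm c}$-foot $u_+$ satisfies $v_+\le k$: then $\mathcal{E}^k(u_+)=\mathcal{Q}^k(u_+)=0$, and the inequality to be proved is $\mathcal{Q}^k(u_-)\ge\sigma(u_-,u_+)\,\mathcal{E}^k(u_-)$ on the $\Omega_{\rm f}$-foot. Here the two branches of $R^k$ are calibrated precisely so that $\mathcal{Q}^k(u_-)=\sigma_0(w)\,\mathcal{E}^k(u_-)$, where $\sigma_0(w)$ is the slope of the chord of the $1$-family flux graph $v\mapsto\big(p^{-1}(w-v),\,v\,p^{-1}(w-v)\big)$ between $v=V_{\rm f}$ and $v=\min\{k,V_{\rm c}\}$ (for $k>V_{\rm c}$ this is the $\sigma$-quotient in the statement, while the $\Omega_{\rm f}'$ and vacuum sub-cases only improve the estimate). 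By \ref{LWR-ARZ2} the state $u_+$ sits on the curve $\{w_2=w\}$ with $v_+\le\min\{k,V_{\rm c}\}$, and along that curve $\d f/\d g=\lambda_1$ is decreasing by the genuine non-linearity \eqref{H2}, i.e.\ the graph is concave; hence the chord slope from $v=V_{\rm f}$ to $v=v_+$ does not exceed $\sigma_0(w)$, and since $\mathcal{E}^k(u_-)\ge0$ this yields $\mathcal{Q}^k(u_-)=\sigma_0(w)\,\mathcal{E}^k(u_-)\ge\sigma(u_-,u_+)\,\mathcal{E}^k(u_-)$. Summing all the front inequalities against $\varphi\ge0$ gives the displayed lower bound, and letting $n\to+\infty$ completes the proof of \eqref{eq:entropyPT}, the entropic analogue of \eqref{eq:weakPT}.
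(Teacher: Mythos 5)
Your proposal is correct and follows essentially the same route as the paper's proof: pass to the wave-front tracking approximations, rewrite the integral via Green--Gauss as a sum of per-front entropy productions, check the sign front by front (contacts in $\Omega_{\rm f}$ trivially, waves in $\Omega_{\rm c}$ as for ARZ with the discretized-rarefaction error vanishing as $n\to+\infty$, phase transitions through the calibration of $R^k$ and the concavity of $\rho\mapsto\rho\left[w-p(\rho)\right]$), and then pass to the limit. One small caveat: for $\mathcal{G}_1$ transitions emanating from the vacuum the marker is \emph{not} preserved ($W(u_-)=W_{\rm c}$ while $w_2(u_+)$ may exceed $W_{\rm c}$), but your argument still closes there because at $\rho_-=0$ the entropy pair equals $(1,k)$ independently of the denominator and the front speed is $v_+$, exactly as in the paper's explicit vacuum computation.
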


\begin{figure}[ht]
\centering{
\begin{psfrags}
      \psfrag{a}[c,B]{$\rho''$}
      \psfrag{b}[c,B]{$\quad f=\rho \, k''$}
      \psfrag{c}[c,B]{$\rho'$}
      \psfrag{d}[c,B]{$\quad f=\rho \, k'$}
      \psfrag{e}[c,B]{$\quad R_{\max}$}
      \psfrag{f}[c,B]{$\quad \rho$}
      \psfrag{g}[c,B]{$\rho\,v$}
      \psfrag{h}[c,B]{$\quad \rho_{\max}$}
\includegraphics[width=.3\textwidth]{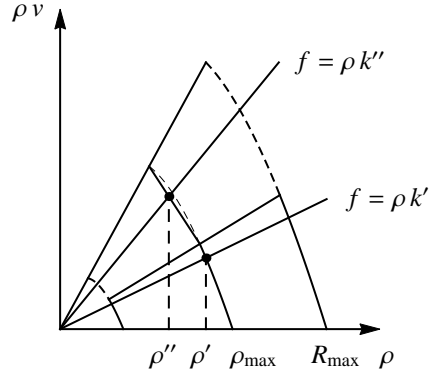}
\end{psfrags}}
\caption{The geometrical meaning of $R^k(w)$, $w\in[W_{\rm c},W_{\max}]$, given in Lemma~\ref{lemma:entropyaltPT}.
Above we let $\rho_{\max} = p^{-1}(w)$, $\rho' = R^{k'}(w)$ and $\rho'' = R^{k''}(w)$ for $0<k'<V_{\rm c}<k''<V_{\rm f}$.}
\label{fig:Rku}
\end{figure}

\begin{proof}
By the a.e.~convergence of $u^n$ to $u$ and the uniform continuity of $\mathcal{E}^k$ and $\mathcal{Q}^k$, in order to establish the above estimate, it is enough to prove that
\[
\liminf_{n\to+\infty} \iint_{\R_+\times\R} \left[\mathcal{E}^k(u^n) \, \varphi_t + \mathcal{Q}^k(u^n) \, \varphi_x\right] \d x \, \d t \ge 0.
\]
Choose $T>0$ such that $\varphi(t,x)=0$ whenever $t \not\in\left]0,T\right[$. 
With the same notation introduced in~\eqref{eq:approximatedsol}, by the Green-Gauss formula, the double integral above can be written as
\[
\int_0^T \sum_{i \in \mathcal{J}^n}
\Upsilon_i^k(t) \, \varphi(t,s_i^n(t)) \, \d t,
\]
where
\[
\Upsilon_i^k(t) \doteq \dot{s}_i^n(t) \left[\mathcal{E}^k(u_{i+\frac{1}{2}}^n) - \mathcal{E}^k(u_{i-\frac{1}{2}}^n)\right]  - \left[\mathcal{Q}^k(u_{i+\frac{1}{2}}^n) - \mathcal{Q}^k(u_{i-\frac{1}{2}}^n)\right].
\]
The case $k = V_{\rm f}$ is obvious because in this case $\Upsilon_i^k(t) = 0$.
For this reason, in the following we can assume that
\[
k < V_{\rm f}.
\]
To estimate the above integral, we have to consider separately the cases in which the $i$-th discontinuity is a phase transition, a shock, a discretized rarefaction or a contact discontinuity.
For notational simplicity we let $q^n_{i+1/2} \doteq \rho^n_{i+1/2} \, v^n_{i+1/2}$.

\begin{itemize}[leftmargin=*]
\item
Assume that the $i$-th discontinuity is a phase transition.
If $\rho^n_{i-1/2} = 0$, then $v^n_{i-1/2} = V_{\rm f} > k$, $\dot{s}_i^n(t) = v^n_{i+1/2} \le V_{\rm c}$ and
\begin{align*}
&k \ge v_{i+\frac{1}{2}}^n \Rightarrow
\Upsilon_i^k(t) =
-\dot{s}_i^n(t)  + k \ge 0,
\\
&k < v_{i+\frac{1}{2}}^n \Rightarrow
\Upsilon_i^k(t) =
-\dot{s}_i^n(t) \, \dfrac{\rho^n_{i+\frac{1}{2}}}{p^{-1}(W(u^n_{i+\frac{1}{2}})-k)}
+\dfrac{q^n_{i+\frac{1}{2}}}{p^{-1}(W(u^n_{i+\frac{1}{2}})-k)} = 0.
\end{align*} 
Assume now that $\rho^n_{i-1/2} \ne 0$.
Then $W(u^n_{i-1/2}) = W(u^n_{i+1/2})$ and we can let $\rho^k_n \doteq R^k(W(u^n_{i\pm1/2}))$.
\begin{itemize}[leftmargin=*]

\item
If $u^n_{i-1/2} \in \Omega_{\rm c}$, then $v^n_{i-1/2} = V_{\rm c} < v^n_{i+1/2} = V_{\rm f}$ and
\begin{align*}
k < V_{\rm c} \Rightarrow
\Upsilon_i^k(t) &=
\dot{s}_i^n(t) \, \dfrac{\rho^n_{i-\frac{1}{2}} - \rho^n_{i+\frac{1}{2}}}{\rho^k_n}
-\dfrac{q^n_{i-\frac{1}{2}} - q^n_{i+\frac{1}{2}}}{\rho^k_n} = 0,
\\
V_{\rm c} \le k < V_{\rm f} \Rightarrow
\Upsilon_i^k(t) &=
\dot{s}_i^n(t) \left[1-\dfrac{\rho^n_{i+\frac{1}{2}}}{\rho^k_n}\right] - \left[ k - \dfrac{q^n_{i+\frac{1}{2}}}{\rho^k_n}\right]
=
\dfrac{1}{\rho^k_n}
\left[
q^n_{i-\frac{1}{2}} 
+\dfrac{q^n_{i-\frac{1}{2}} - q^n_{i+\frac{1}{2}}}{\rho^n_{i-\frac{1}{2}} - \rho^n_{i+\frac{1}{2}}} \left( \rho^k_n - \rho^n_{i-\frac{1}{2}} \right)
-\rho^k_n \, k
\right]
=0.
\end{align*}

\item
If $u^n_{i-1/2} \in \Omega_{\rm f}$, then $v^n_{i-1/2} = V_{\rm f} > k$ and
\begin{align*}
k \ge v^n_{i+\frac{1}{2}} \Rightarrow
\Upsilon_i^k(t) &=
\dot{s}_i^n(t) \left[\dfrac{\rho^n_{i-\frac{1}{2}}}{\rho^k_n} - 1\right] - \left[ \dfrac{q^n_{i-\frac{1}{2}}}{\rho^k_n} - k\right]
=
\dfrac{1}{\rho^k_n}
\left[
\rho^k_n \, k
-q^n_{i-\frac{1}{2}} 
-\dfrac{q^n_{i-\frac{1}{2}} - q^n_{i+\frac{1}{2}}}{\rho^n_{i-\frac{1}{2}} - \rho^n_{i+\frac{1}{2}}} \left( \rho^k_n - \rho^n_{i-\frac{1}{2}} \right)
\right]
\ge 0,
\\
k < v^n_{i+\frac{1}{2}} \Rightarrow
\Upsilon_i^k(t) &=
\dot{s}_i^n(t) \, \dfrac{\rho^n_{i-\frac{1}{2}} - \rho^n_{i+\frac{1}{2}}}{\rho^k_n}
-\dfrac{q^n_{i-\frac{1}{2}} - q^n_{i+\frac{1}{2}}}{\rho^k_n} = 0.
\end{align*}

\end{itemize}

\item
If $u^n_{i-1/2}$ and $u^n_{i+1/2}$ are both in $\Omega_{\rm c}$, then it is sufficient to proceed as in the proof of~\cite[Proposition~5.2]{BCM2order} to obtain for any $k \le V_{\rm c}$ that
\begin{align*}
&v^n_{i+\frac{1}{2}} < v^n_{i-\frac{1}{2}}
\Rightarrow
\Upsilon_i^k(t) \ge 0,
\\
&v^n_{i+\frac{1}{2}} > v^n_{i-\frac{1}{2}}
\Rightarrow
\Upsilon_i^k(t) \ge 
-\max_{\rho \in \left[p^{-1}\left(W_{\rm c}-V_{\rm c}\right),R_{\max}\right]}\left[2+ \frac{\rho \, p''\left(\rho\right)}{p'\left(\rho\right)}\right]
\left[\vphantom{\frac{\rho \, p''\left(\rho\right)}{p'\left(\rho\right)}} v^n_{i+\frac{1}{2}} - v^n_{i-\frac{1}{2}}\right],
\\
&v^n_{i+\frac{1}{2}} = v^n_{i-\frac{1}{2}}
\Rightarrow
\Upsilon_i^k(t) = 0.
\end{align*}
If $k > V_{\rm c}$, then $v^n_{i-1/2}$ and $v^n_{i+1/2}$ are both less than $k$ and therefore $\Upsilon_i^k(t) = 0$.

\item
If $u^n_{i-1/2}$ and $u^n_{i+1/2}$ are both in $\Omega_{\rm f}$, then $k < V_{\rm f} = v^n_{i-1/2} = v^n_{i+1/2}$ and
\[
\Upsilon_i^k(t) =
\dot{s}_i^n(t)
\left[\frac{\rho^n_{i-\frac{1}{2}}}{R^k(W(u^n_{i-\frac{1}{2}}))} - \frac{\rho^n_{i+\frac{1}{2}}}{R^k(W(u^n_{i+\frac{1}{2}}))}\right]
-\left[\frac{q^n_{i-\frac{1}{2}}}{R^k(W(u^n_{i-\frac{1}{2}}))} - \frac{q^n_{i+\frac{1}{2}}}{R^k(W(u^n_{i+\frac{1}{2}}))}\right]
= 0
\]
because $\dot{s}_i^n(t) = V_{\rm f} = v^n_{i-1/2} = v^n_{i+1/2}$.

\end{itemize}

\noindent In conclusion we proved that
\begin{align*}
&\liminf_{n\to+\infty} \iint_{\R_+\times\R} \left[\mathcal{E}^k(u^n) \, \varphi_t + \mathcal{Q}^k(u^n) \, \varphi_x\right] \d x \, \d t
\geq
\liminf_{n\to+\infty} \int_0^T \sum_{i \in \mathcal{R}^n(t)}
\Upsilon_i^k(t) \, \varphi(t,s_i^n(t)) \, \d t,
\end{align*}
where $\mathcal{R}^n(t)$ is the set of indexes corresponding to discretized rarefactions in $\Omega_{\rm c}$.
Finally, by proceeding as in the proof of~\cite[Proposition~5.2]{BCM2order}, we have that
\[
\liminf_{n\to+\infty} \int_0^T \sum_{i \in \mathcal{R}^n(t)}
\Upsilon_i^k(t) \, \varphi(t,s_i^n(t)) \, \d t
=0,
\]
and this concludes the proof.
\end{proof}
\noindent
Condition~\ref{entropyPT2} in Definition~\ref{def:entropyPT} follows directly from the above lemma.
Indeed, $W \equiv w_2$ in $\Omega_{\rm c}$ and $(\mathcal{E}_{\rm ARZ}^k , \mathcal{Q}_{\rm ARZ}^k) \equiv (\mathcal{E}^k , \mathcal{Q}^k)$ on $\Omega_{\rm c}$ for any $k \in [0,V_{\rm c}]$.
We finally prove that $u$ satisfies also the condition~\ref{entropyPT3} of Definition~\ref{def:entropyPT}.
Assume that $x \mapsto u(t,x)$, $t>0$, performs a phase transition from $u_-$ to $u_+$.
We have to prove that $(u_-,u_+)$ belongs to $\mathcal{G}_{\rm e}$.
We already know that $(u_-,u_+)$ belongs to $\mathcal{G}_{\rm w}$.
For this reason, by comparing $\mathcal{G}_{\rm e}$ with $\mathcal{G}_{\rm w}$, it is clear that it is sufficient to prove that if $u_- \in \Omega_{\rm c}$, then $u_+ \in \Omega_{\rm f}''$ and $v_- = V_{\rm c}$.
We first recall that, by Lemma~\ref{lemma:AnimalsAsLeaders}, the speed of propagation of the phase transition is $\sigma(u_-,u_+)$ defined  by~\eqref{eq:sigma}.
Hence, by the above lemma we have that
\begin{equation}\label{eq:PorcupineTree}\tag{$\clubsuit$}
\sigma(u_-,u_+) \left[\mathcal{E}^k(u_+) - \mathcal{E}^k(u_-)\right]  - \left[\mathcal{Q}^k(u_+) - \mathcal{Q}^k(u_-)\right] \ge 0.
\end{equation}
Assume now by contradiction that $u_+ \in \Omega_{\rm f}'$.
Then $W(u_+) = W_{\rm c}$, $\rho_+ \left[w_2(u_-)-W_{\rm c}\right] = 0$ and for any $k$ in $]V_{\rm c},V_{\rm f}[$ we would have that $\rho^{\rm c}_k \doteq R^k(W_{\rm c})$ would satisfy
\[
\sigma(u_-,u_+) \left[1 - \frac{\rho_+}{\rho^{\rm c}_k}\right]  - \left[k - \frac{\rho_+ \, v_+}{\rho^{\rm c}_k}\right]
=
\frac{1}{\rho^{\rm c}_k}
\left[\vphantom{\frac{\rho_+}{\rho^{\rm c}_k}}
\rho_+ \, v_+
+
\sigma(u_-,u_+) \left(\rho^{\rm c}_k - \rho_+\right)
-
\rho^{\rm c}_k \, k
\right]
<0,
\]
which contradicts~\eqref{eq:PorcupineTree}.
Now, from the fact that $(u_-,u_+)$ belongs to $\mathcal{G}_{\rm w} \cap (\Omega_{\rm c} \times \Omega_{\rm f}'')$, we immediately deduce that $W(u_-) = W(u_+) = w_2(u_\pm)$.
Finally, if by contradiction $v_- < V_{\rm c}$, then $\rho^{\rm c} \doteq p^{-1}(w_2(u_\pm)-V_{\rm c})$ would satisfy
\[
\sigma(u_-,u_+) \left[1 - \frac{\rho_+}{\rho^{\rm c}}\right]  - \left[V_{\rm c} - \frac{\rho_+ \, v_+}{\rho^{\rm c}}\right]
=
\frac{1}{\rho^{\rm c}}
\left[\vphantom{\frac{\rho_+}{\rho^{\rm c}}}
\rho_+ \, v_+
+
\sigma(u_-,u_+) \left(\rho^{\rm c} - \rho_+\right)
-
\rho^{\rm c} \, V_{\rm c}
\right]
<0,
\]
which contradicts~\eqref{eq:PorcupineTree} with $k = V_{\rm c}$.
In conclusion we proved that $(u_-,u_+) \in \mathcal{G}_{\rm e}$ and that $u$ satisfies also the condition~\ref{entropyPT3} of Definition~\ref{def:entropyPT}.

\section*{Acknowledgment}

The first author warmly thanks the Faculty of Mathematics, Physics and Computer Science, Maria Curie-Sk{\l}odowska-University, for the hospitality during the preparation of this paper.
The last author warmly thanks Gran Sasso Science Institute for the hospitality during the preparation of this paper.
All the authors wish to thank Boris Andreianov and Carlotta Donadello for stimulating discussions.

\section*{References}

\bibliographystyle{elsarticle-harv}
\bibliography{biblio}

\begin{thebibliography}{54}
\expandafter\ifx\csname natexlab\endcsname\relax\def\natexlab#1{#1}\fi
\expandafter\ifx\csname url\endcsname\relax
  \def\url#1{\texttt{#1}}\fi
\expandafter\ifx\csname urlprefix\endcsname\relax\def\urlprefix{URL }\fi

\bibitem[{Andreianov et~al.(2015)Andreianov, Donadello, Razafison, and
  Rosini}]{AndreianovDonadelloRosiniMBE}
Andreianov, B., Donadello, C., Razafison, U., Rosini, M.~D., 2015. Riemann
  problems with non--local point constraints and capacity drop. Mathematical
  Biosciences and Engineering 12~(2), 259--278.

\bibitem[{Andreianov et~al.(0)Andreianov, Donadello, and Rosini}]{BCM2order}
Andreianov, B., Donadello, C., Rosini, M.~D., 0. A second-order model for
  vehicular traffics with local point constraints on the flow. Mathematical
  Models and Methods in Applied Sciences 0~(ja), null.

\bibitem[{Andreianov et~al.(2014)Andreianov, Donadello, and
  Rosini}]{AndreianovDonadelloRosini1}
Andreianov, B., Donadello, C., Rosini, M.~D., 2014. Crowd dynamics and
  conservation laws with nonlocal constraints and capacity drop. Math. Models
  Methods Appl. Sci. 24~(13), 2685--2722.

\bibitem[{Andreianov et~al.(2010)Andreianov, Goatin, and Seguin}]{scontrainte}
Andreianov, B., Goatin, P., Seguin, N., 2010. Finite volume schemes for locally
  constrained conservation laws. Numerische Mathematik 115, 609--645.

\bibitem[{Andreianov et~al.(2016)Andreianov, Donadello, Razafison, Rolland, and
  Rosini}]{BCJMU-order2}
Andreianov, B.~P., Donadello, C., Razafison, U., Rolland, J.~Y., Rosini, M.~D.,
  2016. Solutions of the aw-rascle-zhang system with point constraints.
  Networks and Heterogeneous Media 11~(1), 29--47.

\bibitem[{{Andreianov, Boris} et~al.(2015){Andreianov, Boris}, {Donadello,
  Carlotta}, {Razafison, Ulrich}, and {Rosini, Massimiliano
  D.}}]{BCMUqualitative}
{Andreianov, Boris}, {Donadello, Carlotta}, {Razafison, Ulrich}, {Rosini,
  Massimiliano D.}, 2015. Qualitative behaviour and numerical approximation of
  solutions to conservation laws with non-local point constraints on the flux
  and modeling of crowd dynamics at the bottlenecks. ESAIM: M2AN.

\bibitem[{Aw et~al.(2002)Aw, Klar, Materne, and Rascle}]{AKMR2002}
Aw, A., Klar, A., Materne, T., Rascle, M., 2002. Derivation of continuum
  traffic flow models from microscopic follow-the-leader models. SIAM Journal
  on Applied Mathematics 63~(1), 259--278.

\bibitem[{Aw and Rascle(2000)}]{ARZ1}
Aw, A., Rascle, M., 2000. Resurrection of ``second order'' models of traffic
  flow. SIAM Journal on Applied Mathematics 60~(3), 916--938.

\bibitem[{Bagnerini et~al.(2006)Bagnerini, Colombo, and
  Corli}]{BagneriniColomboCorli}
Bagnerini, P., Colombo, R.~M., Corli, A., 2006. On the role of source terms in
  continuum traffic flow models. Mathematical and Computer Modelling
  44~(9–10), 917 -- 930.

\bibitem[{Bellomo et~al.(2002)Bellomo, Delitala, and Coscia}]{Bellomo2002}
Bellomo, N., Delitala, M., Coscia, V., 2002. On the mathematical theory of
  vehicular traffic flow. {I}. {F}luid dynamic and kinetic modelling. Math.
  Models Methods Appl. Sci. 12~(12), 1801--1843.

\bibitem[{Bellomo and Dogbe(2011)}]{bellomo2011modeling}
Bellomo, N., Dogbe, C., 2011. On the modeling of traffic and crowds: a survey
  of models, speculations, and perspectives. SIAM Rev. 53~(3), 409--463.

\bibitem[{Berthelin et~al.(2008)Berthelin, Degond, Delitala, and
  Rascle}]{BerthelinDegondDelitalaRascle}
Berthelin, F., Degond, P., Delitala, M., Rascle, M., 2008. A model for the
  formation and evolution of traffic jams. Archive for Rational Mechanics and
  Analysis 187~(2), 185--220.

\bibitem[{Blandin et~al.(2011)Blandin, Work, Goatin, Piccoli, and
  Bayen}]{BlandinWorkGoatinPiccoliBayen}
Blandin, S., Work, D., Goatin, P., Piccoli, B., Bayen, A., 2011. A general
  phase transition model for vehicular traffic. SIAM J. Appl. Math. 71~(1),
  107--127.

\bibitem[{Borsche et~al.(2012)Borsche, Kimathi, and Klar}]{BorscheKimathiKlar}
Borsche, R., Kimathi, M., Klar, A., 2012. A class of multi-phase traffic
  theories for microscopic, kinetic and continuum traffic models. Computers \&
  Mathematics with Applications 64~(9), 2939 -- 2953.

\bibitem[{Bressan(2000)}]{bressanbook}
Bressan, A., 2000. Hyperbolic systems of conservation laws. Vol.~20 of Oxford
  Lecture Series in Mathematics and its Applications. Oxford University Press,
  Oxford, the one-dimensional Cauchy problem.

\bibitem[{Canc\`es and Seguin(2012)}]{Cances20123036}
Canc\`es, C., Seguin, N., 2012. {Error Estimate for Godunov Approximation of
  Locally Constrained Conservation Laws}. SIAM Journal on Numerical Analysis
  50~(6), 3036--3060.

\bibitem[{Chalons et~al.(2013)Chalons, Goatin, and
  Seguin}]{chalonsgoatinseguin}
Chalons, C., Goatin, P., Seguin, N., 2013. {General constrained conservation
  laws. Application to pedestrian flow modeling}. Networks and Heterogeneous
  Media 8~(2), 433--463.

\bibitem[{Colombo(2002)}]{Colombophasetransitions}
Colombo, R.~M., 2002. Hyperbolic phase transitions in traffic flow. SIAM J.
  Appl. Math. 63~(2), 708--721 (electronic).

\bibitem[{Colombo and Goatin(2007)}]{ColomboGoatinConstraint}
Colombo, R.~M., Goatin, P., 2007. A well posed conservation law with a variable
  unilateral constraint. J. Differential Equations 234~(2), 654--675.

\bibitem[{Colombo et~al.(2007)Colombo, Goatin, and
  Priuli}]{ColomboGoatinPriuli}
Colombo, R.~M., Goatin, P., Priuli, F.~S., 2007. Global well posedness of
  traffic flow models with phase transitions. Nonlinear Anal. 66~(11),
  2413--2426.

\bibitem[{Colombo et~al.(2010{\natexlab{a}})Colombo, Goatin, and
  Rosini}]{ColomboGoatinRosini1}
Colombo, R.~M., Goatin, P., Rosini, M.~D., 2010{\natexlab{a}}. Conservation
  laws with unilateral constraints in traffic modeling. In: Applied and
  industrial mathematics in {I}taly {III}. Vol.~82 of Ser. Adv. Math. Appl.
  Sci. World Sci. Publ., Hackensack, NJ, pp. 244--255.

\bibitem[{Colombo et~al.(2010{\natexlab{b}})Colombo, Goatin, and
  Rosini}]{ColomboGoatinRosini3}
Colombo, R.~M., Goatin, P., Rosini, M.~D., 2010{\natexlab{b}}. A macroscopic
  model for pedestrian flows in panic situations. In: Current advances in
  nonlinear analysis and related topics. Vol.~32 of GAKUTO Internat. Ser. Math.
  Sci. Appl. Gakk\=otosho, Tokyo, pp. 255--272.

\bibitem[{Colombo et~al.(2011)Colombo, Goatin, and
  Rosini}]{ColomboGoatinRosini2}
Colombo, R.~M., Goatin, P., Rosini, M.~D., 2011. On the modelling and
  management of traffic. ESAIM Math. Model. Numer. Anal. 45~(5), 853--872.

\bibitem[{Colombo and Marcellini(2015)}]{ColomboMarcellini2015}
Colombo, R.~M., Marcellini, F., 2015. A mixed {ODE-PDE} model for vehicular
  traffic. Mathematical Methods in the Applied Sciences 38~(7), 1292--1302.

\bibitem[{Colombo et~al.(2010{\natexlab{c}})Colombo, Marcellini, and
  Rascle}]{ColomboMarcelliniRascle}
Colombo, R.~M., Marcellini, F., Rascle, M., 2010{\natexlab{c}}. A 2-phase
  traffic model based on a speed bound. SIAM J. Appl. Math. 70~(7), 2652--2666.

\bibitem[{Colombo and Rosini(2005)}]{ColomboRosiniPedestrain1}
Colombo, R.~M., Rosini, M.~D., 2005. Pedestrian flows and non-classical shocks.
  Math. Methods Appl. Sci. 28~(13), 1553--1567.

\bibitem[{Dafermos(1972)}]{DafermosWFT}
Dafermos, C.~M., 1972. Polygonal approximations of solutions of the initial
  value problem for a conservation law. J. Math. Anal. Appl. 38, 33--41.

\bibitem[{Delle~Monache and Goatin(2014{\natexlab{a}})}]{DelleMonache2014435}
Delle~Monache, M., Goatin, P., 2014{\natexlab{a}}. A front tracking method for
  a strongly coupled pde-ode system with moving density constraints in traffic
  flow. Discrete and Continuous Dynamical Systems - Series S 7~(3), 435--447.

\bibitem[{Delle~Monache and Goatin(2014{\natexlab{b}})}]{delle2014scalar}
Delle~Monache, M.~L., Goatin, P., 2014{\natexlab{b}}. Scalar conservation laws
  with moving constraints arising in traffic flow modeling: an existence
  result. Journal of Differential equations 257~(11), 4015--4029.

\bibitem[{{Di Francesco} et~al.(2015){Di Francesco}, {Fagioli}, and
  {Rosini}}]{MarcoSimoneMax-ARZ1}
{Di Francesco}, M., {Fagioli}, S., {Rosini}, M.~D., Nov. 2015. {Many particle
  approximation of the Aw-Rascle-Zhang second order model for vehicular
  traffic}. ArXiv e-prints.

\bibitem[{Fan et~al.(2014)Fan, Herty, and Seibold}]{FanHertySeibold}
Fan, S., Herty, M., Seibold, B., 2014. Comparative model accuracy of a
  data-fitted generalized aw-rascle-zhang model. Networks and Heterogeneous
  Media 9~(2), 239--268.

\bibitem[{Ferreira and Kondo(2010)}]{FerreiraKondo2010}
Ferreira, R.~E., Kondo, C.~I., 2010. Glimm method and wave-front tracking for
  the {A}w-{R}ascle traffic flow model. Far East J. Math. Sci. (FJMS) 43~(2),
  203--223.

\bibitem[{Garavello and Piccoli(2006)}]{garavello2006traffic}
Garavello, M., Piccoli, B., 2006. Traffic flow on networks. Vol.~1 of AIMS
  Series on Applied Mathematics. American Institute of Mathematical Sciences
  (AIMS), Springfield, MO, conservation laws models.

\bibitem[{Garavello and Piccoli(2009)}]{GaravelloPiccoli2009}
Garavello, M., Piccoli, B., 2009. On fluido-dynamic models for urban traffic.
  Netw. Heterog. Media 4~(1), 107--126.

\bibitem[{Goatin(2006)}]{goatin2006aw}
Goatin, P., 2006. The {A}w--{R}ascle vehicular traffic flow model with phase
  transitions. Mathematical and computer modelling 44~(3), 287--303.

\bibitem[{Godvik and Hanche-Olsen(2008)}]{godvik}
Godvik, M., Hanche-Olsen, H., 2008. Existence of solutions for the
  {A}w-{R}ascle traffic flow model with vacuum. J. Hyperbolic Differ. Equ.
  5~(1), 45--63.

\bibitem[{Greenshields(1935)}]{Greenshields}
Greenshields, B., 1935. A study of traffic capacity. Proceedings of the Highway
  Research Board 14, 448--477.

\bibitem[{Kerner(2004)}]{kerner2004physics}
Kerner, B., 2004. The Physics of Traffic: Empirical Freeway Pattern Features,
  Engineering Applications, and Theory. Understanding Complex Systems. Springer
  Berlin Heidelberg.

\bibitem[{Kruzhkov(1970)}]{kruzkov}
Kruzhkov, S.~N., 1970. First order quasilinear equations with several
  independent variables. Mat. Sb. (N.S.) 81 (123), 228--255.

\bibitem[{Lattanzio et~al.(2011)Lattanzio, Maurizi, and
  Piccoli}]{Lattanzio201150}
Lattanzio, C., Maurizi, A., Piccoli, B., 2011. Moving bottlenecks in car
  traffic flow: a {PDE}-{ODE} coupled model. SIAM J. Math. Anal. 43~(1),
  50--67.

\bibitem[{Laval(2011)}]{hysteresis}
Laval, J.~A., 2011. Hysteresis in traffic flow revisited: An improved
  measurement method. Transportation Research Part B: Methodological 45~(2),
  385 -- 391.

\bibitem[{LeFloch(2002)}]{LeflochBook}
LeFloch, P.~G., 2002. Hyperbolic systems of conservation laws. Lectures in
  Mathematics ETH Z\"urich. Birkh\"auser Verlag, Basel, the theory of classical
  and nonclassical shock waves.

\bibitem[{Lighthill and Whitham(1955)}]{LWR1}
Lighthill, M., Whitham, G., 1955. On kinematic waves. {II.} {A} theory of
  traffic flow on long crowded roads. In: Royal Society of London. Series A,
  Mathematical and Physical Sciences. Vol. 229. pp. 317--345.

\bibitem[{Lu(2011)}]{Lu20112797}
Lu, Y.-g., 2011. Existence of global bounded weak solutions to nonsymmetric
  systems of {K}eyfitz-{K}ranzer type. J. Funct. Anal. 261~(10), 2797--2815.

\bibitem[{Marcellini(2014)}]{Marcellini2014}
Marcellini, F., 2014. Free-congested and micro-macro descriptions of traffic
  flow. Discrete Contin. Dyn. Syst. Ser. S 7~(3), 543--556.

\bibitem[{Mohan and Ramadurai(2013)}]{survey2013}
Mohan, R., Ramadurai, G., 2013. State-of-the art of macroscopic traffic flow
  modelling. Int. J. Adv. Eng. Sci. Appl. Math. 5~(2-3), 158--176.

\bibitem[{Pan and Han(2013)}]{PanHan}
Pan, L., Han, X., 2013. The {A}w-{R}ascle traffic model with {C}haplygin
  pressure. J. Math. Anal. Appl. 401~(1), 379--387.

\bibitem[{Piccoli and Tosin(2012)}]{piccolitosinreview}
Piccoli, B., Tosin, A., 2012. Vehicular traffic: a review of continuum
  mathematical models. In: Mathematics of complexity and dynamical systems.
  {V}ols. 1--3. Springer, New York, pp. 1748--1770.

\bibitem[{Richards(1956)}]{LWR2}
Richards, P.~I., 1956. Shock waves on the highway. Operations Research 4~(1),
  pp. 42--51.

\bibitem[{Rosini(2013{\natexlab{a}})}]{Rosini201363}
Rosini, M.~D., 2013{\natexlab{a}}. The initial-boundary value problem and the
  constraint. Understanding Complex Systems, 63--91.

\bibitem[{Rosini(2013{\natexlab{b}})}]{Rosinibook}
Rosini, M.~D., 2013{\natexlab{b}}. Macroscopic models for vehicular flows and
  crowd dynamics: theory and applications. Understanding Complex Systems.
  Springer, Heidelberg, classical and non-classical advanced mathematics for
  real life applications.

\bibitem[{van Wageningen-Kessels et~al.(2014)van Wageningen-Kessels, van Lint,
  Vuik, and Hoogendoorn}]{survey2014}
van Wageningen-Kessels, F., van Lint, H., Vuik, K., Hoogendoorn, S., 2014.
  Genealogy of traffic flow models. EURO Journal on Transportation and
  Logistics, 1--29.

\bibitem[{{Vol'pert}(1968)}]{Volpert}
{Vol'pert}, A., 1968. {The spaces BV and quasilinear equations}. {Math. USSR,
  Sb.} 2, 225--267.

\bibitem[{Zhang(2002)}]{ARZ2}
Zhang, H., 2002. A non-equilibrium traffic model devoid of gas-like behavior.
  Transportation Research Part B: Methodological 36~(3), 275--290.

\end{thebibliography}





\end{document}